\documentclass[reqno]{amsart}
\usepackage{amsfonts, amsmath, amssymb, amsthm, color}
\usepackage[margin=2.75cm, heightrounded]{geometry}
\usepackage{nccmath}

\newtheorem{theorem}{Theorem}[section]

\newtheorem{proposition}[theorem]{Proposition}
\newtheorem{lemma}[theorem]{Lemma}
\newtheorem{cor}[theorem]{Corollary}
\theoremstyle{remark}
\newtheorem{remark}[theorem]{Remark}

\newcommand{\pa}{\partial}
\newcommand{\ep}{\epsilon}
\newcommand{\ga}{\gamma}

\newcommand{\R}{\mathbb{R}}

\newcommand{\mone}{\mathbf{1}}

\newcommand{\mbN}{\mathbf{N}}

\newcommand{\mca}{\mathcal{A}}
\newcommand{\mce}{\mathcal{E}}
\newcommand{\mci}{\mathcal{I}}
\newcommand{\mcp}{\mathcal{P}}
\newcommand{\mcq}{\mathcal{Q}}
\newcommand{\mcr}{\mathcal{R}}

\newcommand{\tc}{\tilde{c}}
\newcommand{\td}{\tilde{d}}
\newcommand{\tta}{\tilde{\tau}}

\newcommand{\whh}{\widehat{H}}

\newcommand{\wtg}{\widetilde{G}}
\newcommand{\wth}{\widetilde{H}}
\newcommand{\wtj}{\widetilde{J}}
\newcommand{\wtr}{\widetilde{R}}
\newcommand{\ovom}{\overline{\Omega}}

\newcommand{\dx}{\textup{d}x}
\newcommand{\dy}{\textup{d}y}

\newcommand{\oms}{\Omega_*}

\newcommand{\mx}{{\mu,\xi}}

\renewcommand{\(}{\left(}
\renewcommand{\)}{\right)}

\linespread{1.08}
\allowdisplaybreaks
\numberwithin{equation}{section}

\begin{document}
\title{Coron's problem for the critical Lane-Emden system}

\author[S. Jin]{Sangdon Jin}
\address[Sangdon Jin]{Department of Mathematics, Chung-Ang University, Seoul 06974, Korea}
\email{sdjin@cau.ac.kr}

\author[S. Kim]{Seunghyeok Kim}
\address[Seunghyeok Kim]{Department of Mathematics and Research Institute for Natural Sciences, College of Natural Sciences, Hanyang University,
222 Wangsimni-ro Seongdong-gu, Seoul 04763, Republic of Korea}
\email{shkim0401@hanyang.ac.kr shkim0401@gmail.com}

\begin{abstract}
In this paper, we address the solvability of the critical Lane-Emden system
\[\begin{cases}
-\Delta u=|v|^{p-1}v &\textup{in } \Omega_\ep,\\
-\Delta v=|u|^{q-1}u &\textup{in } \Omega_\ep,\\
u=v=0 &\textup{on } \pa \Omega_\ep
\end{cases}\]
where $N \ge 4$, $p \in (1,\frac{N-1}{N-2})$, $\frac{1}{p+1} + \frac{1}{q+1}=\frac{N-2}{N}$, and $\Omega_\ep$ is a smooth bounded domain with a small hole of radius $\ep > 0$.
We prove that the system admits a family of positive solutions that concentrate around the center of the hole as $\ep \to 0$, obtaining a concrete qualitative description of the solutions as well.
To the best of our knowledge, this is the first existence result for the critical Lane-Emden system on a bounded domain,
while the non-existence result on star-shaped bounded domains has been known since the early 1990s due to Mitidieri (1993) \cite{Mi} and van der Vorst (1991) \cite{Va}.
\end{abstract}

\date{\today}
\subjclass[2020]{Primary: 35J47, Secondary: 35B33, 35B40, 35B44}
\keywords{Critical Lane-Emden system, Coron's problem, existence, blowing-up solution, asymptotic profile}
\maketitle

\section{Introduction}
Let $N \ge 3$, $p > 1$, and $\Omega$ be a smooth domain that is either bounded or the whole space $\R^N$. More than half-century, the Lane-Emden equation
\begin{equation}\label{LE eq0}
-\Delta u = |u|^{p-1}u \quad \textup{in } \Omega
\end{equation}
has played a role as one of the fundamental equations in the theory of nonlinear partial differential equations, thanks to its profound structural complexity despite its simple appearance.
As is well-known, the order relation between $p$ and the Sobolev exponent $p_S := \frac{N+2}{N-2}$ influences the solution structure of \eqref{LE eq0} in a significant way.
Moreover, the solution structure of the critical Lane-Emden equation (i.e., \eqref{LE eq0} with $p = p_S$)
\begin{equation}\label{LE eq}
\begin{cases}
-\Delta u = |u|^{4 \over N-2}u &\textup{in } \Omega,\\
u \in H^1_0(\Omega)
\end{cases}
\end{equation}
subtly depends on the topology and geometry of the domain $\Omega$.

In this paper, we are interested in an elliptic system
\begin{equation}\label{main system0}
\begin{cases}
-\Delta u=|v|^{p-1}v \quad \textup{in } \Omega,\\
-\Delta v=|u|^{q-1}u \quad \textup{in } \Omega,\\
(u,v) \in X_{p,q}(\Omega),
\end{cases}
\end{equation}
called the Lane-Emden system. Here, $N \ge 3$, $(p,q)$ is a pair of positive numbers, $\Omega$ is either a smooth bounded domain or the whole space $\R^N$, and $X_{p,q}(\Omega)$ is a natural energy space compatible to the Dirichlet boundary condition (see \eqref{X pq}).
For this, the Sobolev hyperbola
\begin{equation}\label{critical}
\frac{1}{p+1}+\frac{1}{q+1} = \frac{N-2}{N}
\end{equation}
plays a similar role to the Sobolev exponent $p_S$ for equation \eqref{LE eq0}, and the critical Lane-Emden system \eqref{main system0}--\eqref{critical} serves as the natural counterpart of \eqref{LE eq} for Hamiltonian-type elliptic systems;
refer to a survey paper \cite{BMT} for detailed account of Hamiltonian elliptic systems.

System \eqref{main system0} has received remarkable attention for decades.
When $\Omega = \R^N$, Lions \cite{Li} found a positive least energy solution to \eqref{main system0}--\eqref{critical}
in $X_{p,q}(\R^N) = \dot{W}^{2,\frac{p+1}{p}}(\R^N) \times \dot{W}^{2,\frac{q+1}{q}}(\R^N)$ by using the concentration-compactness principle.\footnote{We remark
that the energy functional (defined in \eqref{ene}) of system \eqref{main system0}--\eqref{critical} is strongly indefinite in $X_{p,q}(\R^N)$,
and the least energy solutions have the Morse index $+\infty$.}
Alvino et al. \cite{ALT} then showed that it is radially symmetric and decreasing in the radial variable, after a suitable translation.
After that, Wang \cite{Wa} and Hulshof and van der Vorst \cite{HV} independently deduced that it is unique up to translations and dilations.
Under the further assumption that $p \ge 1$, Chen et al. \cite{CLO} extended their results by showing that a positive solution to \eqref{main system} is unique up to translations and dilations.
Recently, Frank, Pistoia, and the second author of this paper \cite{FKP} established that all least energy solutions are non-degenerate in the sense that their linearized equations have precisely the $(N+1)$-dimensional space
of solutions in $X_{p,q}(\R^N)$, originating from the translation and dilation invariance of \eqref{main system0}.
Furthermore, by employing the variational method, Clapp and Salda\~na \cite{CS} found finitely many non-radial sign-changing solutions to \eqref{main system0}--\eqref{critical} for $N \ge 4$.

On the other hand, there is a longstanding conjecture in the literature, called the Lane-Emden conjecture. It claims that if the pair $(p,q)$ satisfies
\begin{equation}\label{subcritical}
\frac{1}{p+1} + \frac{1}{q+1} > \frac{N-2}{N},
\end{equation}
then \eqref{main system0} with $\Omega = \R^N$ admits no positive solutions. The complete answer is unknown yet, though partial results are available in, e.g., \cite{So, CH} and references therein.

If $\Omega$ is a smooth bounded domain, then the situation changes drastically. Due to the works of Hulshof and van der Vorst \cite{HV2}, Figueiredo and Felmer \cite{FF},
and Bonheure et al. \cite{BMR}, it is known that \eqref{main system0} with \eqref{subcritical} has a solution provided $pq \ne 1$.
Conversely, according to Mitidieri \cite{Mi} and van der Vorst \cite{Va}, if $\Omega$ is star-shaped, then \eqref{main system0}--\eqref{critical} has no positive solution.
If $pq > 1$, then the standard variational method produces a positive least energy solution to \eqref{main system0} with \eqref{subcritical}.
Guerra \cite{Gu} and Choi and the second author of this paper \cite{CK} investigated the asymptotic behavior of the least energy solutions as $(p,q)$ approaches the Sobolev hyperbola \eqref{critical},
showing that the least energy solution blows up at a certain point in $\Omega$.
By applying the perturbative argument, the second author and Pistoia \cite{KP} built multi-bubble (so higher energy) solutions to \eqref{main system0}
when $(p,q)$ satisfies \eqref{subcritical} and is sufficiently close to \eqref{critical}, and $\Omega$ is dumbbell-shaped.
Also, when $\Omega$ is convex, the second author and Moon \cite{KM} classified the asymptotic behavior of all positive solutions to \eqref{main system0} with \eqref{subcritical} as $(p,q)$ tends to \eqref{critical}.

However, as far as we know, no existence result for the critical system \eqref{main system0}--\eqref{critical} is known in the literature.\footnote{
The existence theory of linear perturbations of the critical Lane-Emden system \eqref{main system0}--\eqref{critical} in a smooth bounded domain $\Omega$,
i.e., the Brezis-Nirenberg type problem was studied in \cite{HMV, Gu, CK, KP, GHP}.}
Therefore, it is natural to ask if the system can possess a positive solution in a bounded domain $\Omega$ under certain conditions on the topology or geometry on $\Omega$.
In this paper, we give an affirmative answer for the question by constructing solutions to the system
assuming that $\Omega$ has a sufficiently small hole, motivated by Coron's result \cite{Co} for equation \eqref{LE eq}.
Unlike Coron who used the variational method, we apply a perturbative argument. As a by-product, we obtain a concrete qualitative description of the solutions.

In what follows, we pay attention to
\begin{equation}\label{main system}
\begin{cases}
-\Delta u=|v|^{p-1}v &\textup{in } \Omega_\ep,\\
-\Delta v=|u|^{q-1}u &\textup{in } \Omega_\ep,\\
u=v=0 &\textup{on } \pa \Omega_\ep
\end{cases}
\end{equation}
where $N \ge 4$, $\Omega$ is a smooth bounded domain in $\R^N$ such that $0 \in \Omega$, $\ep>0$ is a small number, $\Omega_\ep : =\Omega\setminus \overline{B(0,\ep)}$, $p \in (1,\frac{N-1}{N-2})$,
and the pair $(p,q)$ is on the Sobolev hyperbola \eqref{critical}.\footnote{The condition $p \in (1,\frac{N-1}{N-2})$ forces us to assume $N \ge 4$. If $N = 3$, we must have $p \in (1,2)$.
On the other hand, \eqref{critical} implies that $\frac{1}{3} - \frac{1}{p+1} = \frac{1}{q+1} > 0$, which implies $p > 2$.}
\begin{theorem}\label{thm main}
There exists a small number $\ep_0 > 0$ such that system \eqref{main system} has a positive solution $(u_\ep,v_\ep) \in (C^2(\ovom_\ep))^2$ for each $\ep \in (0,\ep_0)$
Moreover, the family $\{(u_\ep,v_\ep)\}_{\ep \in (0,\ep_0)}$ concentrate around the origin of $\R^N$ as $\ep \to 0$ in the sense that
\begin{equation}\label{eq main}
(u_\ep,v_\ep) = \(\mcp U_\mx+\psi_{\ep,(d,\tau)}, PV_\mx+\phi_{\ep,(d,\tau)}\) \quad \textup{in } \Omega_\ep.
\end{equation}
Here, $(\mcp U_\mx,PV_\mx)$ is the main part of the solution $(u_\ep,v_\ep)$ defined by \eqref{PUPV eq} and \eqref{mcp U eq},
\begin{equation}\label{eq main1}
(\mu,\xi) := (\ep^\alpha d, \ep^\alpha d\tau) \in (0,\infty) \times \R^N \quad \textup{where } \alpha := \frac{N-2}{(N-2)p+N-4} \in (0,1)
\end{equation}
for some suitably chosen $d > 0$ and $\tau \in \R^N$, and
\[\|\Delta \psi_{\ep,(d,\tau)}\|_{L^{\frac{p+1}{p}}(\Omega_\ep)} + \|\Delta \phi_{\ep,(d,\tau)}\|_{L^{\frac{q+1}{q}}(\Omega_\ep)} \to 0 \quad \textup{as } \ep \to 0.\]
\end{theorem}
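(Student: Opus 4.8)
The plan is to implement a Lyapunov–Schmidt reduction based on the known non-degeneracy of the limiting profile on $\R^N$ (the result of \cite{FKP}). Since $p \in (1,\frac{N-1}{N-2})$, the exponent $q = \frac{N+2-(N-2)p}{(N-2)p-2}$ satisfies $q > p_S$, so the bubble $(U_\mx, V_\mx)$ decays slowly in one component and fast in the other; this asymmetry is exactly what the scaling exponent $\alpha$ in \eqref{eq main1} is designed to balance. First I would fix the ansatz: take $(\mcp U_\mx, PV_\mx)$ to be the projections onto $\Omega_\ep$ of the entire bubble, and write the solution as the ansatz \eqref{eq main} with the remainder $(\psi,\phi)$ belonging to the orthogonal complement (in the appropriate dual pairing) of the approximate kernel spanned by the $N+1$ functions $\mcp \partial_\mu U$, $\mcp \partial_{\xi_i} U$ and their $V$-counterparts. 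The natural functional setting is the product of homogeneous Sobolev spaces $\dot W^{2,\frac{p+1}{p}} \times \dot W^{2,\frac{q+1}{q}}$ restricted to $\Omega_\ep$, with the quantities $\|\Delta\psi\|_{L^{(p+1)/p}}$ and $\|\Delta\phi\|_{L^{(q+1)/q}}$ as norms, consistent with the last displayed limit in the theorem.

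Second, I would estimate the error of the ansatz, i.e. $\|\Delta(\mcp U_\mx) + |PV_\mx|^{p-1}PV_\mx\|_{L^{(p+1)/p}(\Omega_\ep)}$ and the symmetric quantity. There are two sources of error: the difference between $PV$ and $V$ (a harmonic correction supported near $\pa\Omega$ and near the sphere $\pa B(0,\ep)$), and the mismatch between the nonlinearity evaluated at the projection versus at the bubble. With the scaling $\mu = \ep^\alpha d$, one checks that the error is $o(1)$ as $\ep \to 0$, with a precise polynomial rate in $\ep$; the value of $\alpha$ is chosen precisely so that the leading contributions of the hole (size $\ep$) and of the concentration (size $\mu$) enter the reduced energy at the same order. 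Third, using the non-degeneracy from \cite{FKP}, I would show the linearized operator at the ansatz is invertible on the orthogonal complement with norm bounded uniformly in $\ep$, and then solve the auxiliary (infinite-dimensional) equation by a contraction mapping argument, obtaining $(\psi_{\ep,(d,\tau)},\phi_{\ep,(d,\tau)})$ depending smoothly on the parameters $(d,\tau)$ and satisfying the stated decay.

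Fourth, I would carry out the finite-dimensional reduction: plug the solution of the auxiliary equation into the energy functional $J_\ep$ and expand to obtain a reduced function $\mathcal{J}_\ep(d,\tau)$ on $(0,\infty)\times\R^N$. The expansion should read, after rescaling, as a constant plus $\ep^{\beta}$ times a function of $(d,\tau)$ whose leading term combines a positive "hole" contribution (the analogue of Coron's Robin-function/Green's-function term coming from $B(0,\ep)$, behaving like $d^{2-N}$ or similar, forcing $d$ away from $0$) and a term from the outer boundary $\pa\Omega$ (behaving like a power of $d$ that forces $d$ away from $\infty$), so that $\mathcal{J}_\ep(\cdot)$ has a strict local minimum (or a stable critical point) at some $(d_*,\tau_*)$ with $d_*>0$; standard degree/min-max theory then yields a critical point of $\mathcal{J}_\ep$ for all small $\ep$, hence a genuine solution. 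Finally I would record that the solution is positive (by the maximum principle applied componentwise, using that the bubble components are positive and the remainder is lower order in $L^\infty$ after elliptic regularity) and of class $C^2(\ovom_\ep)$ by bootstrap.

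The main obstacle I anticipate is the precise asymptotic expansion of the reduced energy, specifically isolating the competition between the inner hole $B(0,\ep)$ and the outer boundary $\pa\Omega$ and verifying that the chosen power $\alpha$ makes them balance. In Coron's scalar problem this is delicate already; here it is compounded by the asymmetry of the Hamiltonian system (two different Sobolev exponents, two projections $PU$ and $PV$ with genuinely different decay rates), so the interaction integrals $\int |PV|^{p-1}PV\,(\mcp U - U)$ and its dual must be computed to high enough order in both $\ep$ and $\mu$. Controlling the projection $PV$ — which solves $-\Delta(PV) = -\Delta V$ in $\Omega_\ep$ with zero boundary data, and whose deviation from $V$ carries different weights near $\pa B(0,\ep)$ versus near $\pa\Omega$ — is the technical heart of the argument and the step most likely to require care.
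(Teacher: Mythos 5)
Your outline captures the right overall framework (Lyapunov--Schmidt reduction, balance of scales $\ep$ vs. $\mu = \ep^\alpha d$, non-degeneracy from \cite{FKP}), but it misses two points that turn out to be the actual technical core of the paper and would cause your argument to stall.

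First, you write "take $(\mcp U_\mx, PV_\mx)$ to be the projections onto $\Omega_\ep$ of the entire bubble," which reads as the linear projection for both components, i.e.\ $PU_\mx$ solving $-\Delta PU_\mx = V_\mx^p$. This does not give a small enough error. Because $p < \frac{N}{N-2}$, the first component $U$ decays slowly, like $|x|^{2-(N-2)p}$, and the error of the linear projection $PU_\mx$ is of the same order as the leading boundary and hole contributions, so the reduced energy would be polluted. The paper replaces $PU_\mx$ by the \emph{nonlinear} projection $\mcp U_\mx$ solving $-\Delta \mcp U_\mx = (PV_\mx)^p$ (not $V_\mx^p$), and its expansion \eqref{mcp U} produces a genuinely new term, the potential $\mca_{\ep,(d,\tau)}$ in \eqref{A mu}, which encodes the coupling between the components near the hole and is not comparable pointwise to $|x|^{2-N}$. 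Without introducing $\mcp U_\mx$ and isolating $\mca_{\ep,(d,\tau)}$, the error estimate and the energy expansion both fail at the required order.

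Second, your statement that the reduced energy "has a strict local minimum (or a stable critical point)" is not what happens, and this is not a cosmetic slip. The reduced function $\Theta(d,\tau) = c_1 d^{(N-2)p-2} + c_2 d^{2-N} U(\tau)V(\tau)$ has a minimum in $d$ but a \emph{maximum} in $\tau$ (since $UV$ peaks at $\tau = 0$), so the critical point $(\td,0)$ is a non-degenerate saddle with one positive and $N$ negative eigenvalues. For a minimum you would only need $C^0$-closeness of the reduced energy to its leading term, which is how the subcritical papers \cite{KP, KM} proceed; for a saddle you need matching $C^1$-estimates, i.e.\ Proposition~\ref{C1 est} together with the $C^1$-bound on the remainder $R$ in Proposition~\ref{second approx2} and Lemma~\ref{J0ep Jep}. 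The $C^1$-estimate is precisely where the presence of $\mca_{\ep,(d,\tau)}$ becomes painful (differentiating the integral over the moving domain $B(\xi,\mu^\kappa)$, Leibniz--Reynolds, etc.). Your plan does not anticipate this and so, as written, it cannot upgrade a critical point of $\Theta$ to one of $J_{0\ep}$. You should also note that the $C^\infty$-regularity assertion for $(\psi,\phi)$ needs an argument (the paper's Appendix~B) since the integrability exponents are just at the edge where a naive bootstrap does not close.

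Everything else — the kernel spanned by the $N+1$ scaled/translated eigenfunctions, the contraction mapping for the auxiliary equation, the heuristic that $\alpha$ is chosen so the hole and boundary contributions enter at the same order $\mu_\ep^{(N-2)p-2} = (\ep/\mu_\ep)^{N-2}$ — matches the paper's argument.
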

\begin{remark}\label{main rmk}
We make some comments on the above theorem.

\medskip \noindent (1) In the proof of Theorem \ref{thm main}, we will find several interesting and unique features of system \eqref{main system}. They include
\begin{itemize}
\item[-] the extremely slow decay of the $u$-component $U_\mx$ of a positive solution $(U_\mx,V_\mx)$ in \eqref{UVmx} to the limit system \eqref{limit system} (see Lemma \ref{limit decay});
\item[-] a strong nonlinear behavior represented by the nonlinear projection $\mcp U_\mx$ of $U_\mx$ in \eqref{mcp U eq} and the potential analysis in the proof of Propositions \ref{second approx} and \ref{second approx2};
\item[-] entanglement between the components $u$ and $v$ of solutions $(u,v)$ to \eqref{main system}, which can be seen in the expansion \eqref{mcp U} of $\mcp U_\mx$ and the definition of $\mca_{\ep,(d,\tau)}$ in \eqref{A mu}
\end{itemize}
among others.

\medskip \noindent (2) We could treat the range $[\frac{N-1}{N-2},\frac{N+2}{N-2}]$ of $p$ as well, but opted to focus on $p \in (1,\frac{N-1}{N-2})$ for the following reasons:
If $p \in [\frac{N}{N-2},\frac{N+2}{N-2}]$, system \eqref{main system} acts as the scalar equation \eqref{LE eq}, which corresponds to $p=\frac{N+2}{N-2}$ for our system. Hence accompanying analysis is expected to be rather traditional.\footnote{Nevertheless,
we still need some additional work to handle when $p$ is close to $\frac{N}{N-2}$. We ask the interested reader to consult \cite[Subsections 5.1, 5.2]{KM}.}
If $p \in [\frac{N-1}{N-2},\frac{N}{N-2})$, the system behaves similarly to our case, but an additional technical issue arises involving the regularity of the auxiliary map $\wth_y$ in \eqref{wth}.
It makes the problem extremely complicated and, in our opinion, somewhat hides the features of the system depicted above.

A suitable combination of the argument in this paper and one in \cite[Section 5]{KM} may produce the existence result analogous to Theorem \ref{thm main} for all $p \in (1,\frac{N+2}{N-2}]$.

If $p = 1$, the system boils down to the biharmonic Lane-Emden equation studied in \cite{AP}.
In our proof, the condition $p > 1$ is crucially used in several places - the setting of the linear theory, the energy expansion, and the regularity of the solutions.
It seems difficult to deal with the case $p \in (\frac{2}{N-2},1)$ with our argument.

\medskip \noindent (3) We see that the exponent $\alpha$ in \eqref{eq main1} tends to $\frac{N-2}{2(N-3)}$ as $p \to 1^+$, which matches the exponent in \cite[(2.3)]{AP}.
Also, a formal computation reveals that the blow-up rate of possible single-bubble solutions to \eqref{main system} must be $\ep^{\alpha}$
for $p \in [\frac{N-1}{N-2},\frac{N}{N-2})$, and $\ep^{\frac{1}{2}}$ for $p \in (\frac{N}{N-2},\frac{N+2}{N-2}]$.
Our computation for $p = \frac{N+2}{N-2}$ is consistent to \cite[Theorem 1.1]{LYY2}. Also, $\alpha \to \frac{1}{2}$ as $p \to (\frac{N}{N-2})^-$.
\end{remark}

There have been extensive studies on the Coron-type problems, namely, the existence theory of solutions to a critical elliptic equation or system in a smooth bounded domain with single or multiple holes.

The Pohozaev identity tells us that \eqref{LE eq} has no positive solution if $\Omega$ is star-shaped.
In contrast, if $\Omega$ is an annulus, \eqref{LE eq} has a positive radial solution, as shown by Kazdan and Warner \cite{KW}.
Later, Coron \cite{Co} revealed that \eqref{LE eq} has a positive solution whenever the (possibly non-symmetric) smooth bounded domain $\Omega$ has a sufficiently small hole.
Lewandowski \cite{Le} proved that if the hole is spherical, say $B(0,\ep) := \{x \in \R^N: |x| < \ep\}$, Coron's solutions concentrate around the origin of $\R^N$ as $\ep \to 0$.
Coron's result was substantially improved by the work \cite{BC} of Bahri and Coron, which yields that
if the singular homology group $H_d(\Omega;\mathbb{Z}_2)$ with coefficients in $\mathbb{Z}_2$ is non-trivial for some $d \in \mathbb{N}$, then \eqref{LE eq} has a positive solution.

Applying the perturbative argument, researchers attempted to find more solutions to \eqref{LE eq}.
Under the presence of one or more spherical holes in $\Omega$, Li et al. \cite{LYY2} proved the existence of single-bubble solutions.
Furthermore, Musso and Pistoia \cite{MP} and Ge et al. \cite{GMP} built sign-changing bubble-tower solutions, where the bubble-tower solution refers to a solution which looks like superpositions of bubbles with different blow-up rates.
For further results, we refer to, e.g., \cite{MP2, CMP, DM}.

In addition, Alarc\'on and Pistoia \cite{AP} obtained a positive single-bubble solution for the biharmonic Lane-Emden equation, which corresponds to the critical Lane-Emden system \eqref{main system} with $p = 1$.
For the fractional Lane-Emden equation, Long et al. \cite{LYY} and Chen et al. \cite{CLY} derived the existence of a positive single-bubble solution and sign-changing bubble-tower solutions, respectively.

Lastly, we point out that Coron-type results are available for the critical coupled Schr\"odinger systems, which are the natural counterparts of \eqref{LE eq} for gradient-type elliptic systems.
Indeed, thanks to the works of Pistoia and Soave \cite{PS} and Pistoia et al. \cite{PST}, the existence of positive solutions whose component looks like a single-bubble or a bubble-tower is known.\footnote{The Lane-Emden system
\eqref{main system0} is strongly coupled, and only synchronized blowing-up solutions (namely, solutions whose components have the same blow-up point) can exist.
On the other hand, the coupled nonlinear Schr\"odinger system is weakly coupled, and both segregated blowing-up solutions (namely, solutions whose components have different blow-up points) and synchronized blowing-up solutions may exist.}

\medskip \noindent \textbf{Structure of the paper and novelty of our proof.}
We apply the Lyapunov-Schmidt reduction method to construct the desired solutions to system \eqref{main system}.
For its successful application, we have to find a very precise ansatz for the solutions that reflects the system's unique characteristics (see Remark \ref{main rmk} (1)) and analyze it carefully, which are the main difficulties in our proof.

In Section \ref{sec pre}, we collect some preliminary results such as the definition of the appropriate function spaces and auxiliary maps,
the expansion of the regular part of Green's function of the Laplacian $-\Delta$ in $\Omega_\ep$, and properties of the solutions to the limit system \eqref{limit system}.

In Section \ref{sec app}, we define and improve ansatz of the solutions to \eqref{main system}.
In particular, we set the $u$-component of the refined ansatz as the nonlinear projection $\mcp U_\mx$ of $U_\mx$ (see \eqref{UVmx} and \eqref{mcp U eq}),
and expand it by introducing a suitable function $\mca_{\ep,(d,\tau)}$ and conducting a delicate potential analysis; refer to Proposition \ref{second approx}.
Because our solutions will be characterized as saddle points of the reduced energy $J_\ep$ in \eqref{red ene}, the $C^1$-smallness of the remainder term $R$ in \eqref{R} is required.
Unlike the corresponding situation for \eqref{LE eq} or other related equations, the presence of the `ugly' function $\mca_{\ep,(d,\tau)}$ makes such estimate non-trivial.
In Proposition \ref{second approx2}, we tackle this issue through a delicate analysis.

In Section \ref{sec red}, we perform the $C^1$-estimate of the reduced energy $J_\ep$.
Lots of technical computations appearing in Sections \ref{sec app} and \ref{sec red} are postponed to Appendix \ref{sec tech}.

In Section \ref{sec LSred}, we carry out the reduction procedure.
Several results in this section easily follow from known arguments. For the conciseness of the paper, we concentrate only on the results that do not. Whenever we omit the details, we will leave appropriate references.

In Section \ref{sec comp}, we complete the proof of Theorem \ref{thm main}.

In Appendix \ref{sec bdd}, we study the regularity of $(\psi_{\ep,(d,\tau)},\phi_{\ep,(d,\tau)})$ (see \eqref{eq main}) by establishing a general regularity result
for linear Hamiltonian-type elliptic systems.\footnote{There is a gap in the proof of \cite[Proposition 4.6]{KP},
since the functions $Q_1$ and $Q_2$ in \cite[(B.5)]{KP} do not belong to $L^{\infty}(\Omega)$. We point out that the argument in Appendix \ref{sec bdd} fills this gap.}

\medskip \noindent \textbf{Notations.}
Here, we list some notations used in the paper.

\medskip \noindent - Given $x \in \R^N$ and $r > 0$, $B(x,r)$ stands for an open ball centered at $x$ of radius $r > 0$.

\medskip \noindent - $\mathbb{S}^{N-1}$ is the unit sphere in $\R^N$ centered at the origin.

\medskip \noindent - Unless otherwise stated, $C > 0$ is a universal constant that may vary from line to line and even in the same line.

\medskip \noindent - Let (A) be a condition. We set $\mone_{\textup{(A)}} = 1$ if (A) holds and $0$ otherwise.

\medskip \noindent - $O(1)$ denotes a term such that
\begin{equation}\label{O(1)}
|O(1)| \le C \quad \textup{for all } \ep > 0 \textup{ small } (\textup{and } x \in \Omega_\ep \textup{ if it is a function in } \Omega_\ep)
\end{equation}
where $C > 0$ is independent of $\ep$ (and $x$). Also, $o(1)$ denotes a term such that
\begin{equation}\label{o(1)}
|o(1)| \le C_{\ep} \quad \textup{for all } \ep > 0 \textup{ small } (\textup{and } x \in \Omega_\ep \textup{ if it is a function in } \Omega_\ep)
\end{equation}
where $0 < C_{\ep} \to 0$ as $\ep \to 0$.

\medskip \noindent \textbf{Caution.} In the following, we always assume that the pair $(p,q)$ satisfies \eqref{critical}.

\section{Preliminaries}\label{sec pre}
\subsection{Functional setting}
Let
\begin{equation}\label{pq star}
\frac{1}{p^\ast} = \frac{p}{p+1} - \frac{1}{N} = \frac{1}{q+1} + \frac{1}{N}
\quad \textup{and} \quad
\frac{1}{q^\ast} = \frac{q}{q+1} - \frac{1}{N} = \frac{1}{p+1} + \frac{1}{N}
\end{equation}
so that $p^\ast$ and $q^\ast$ are the H\"older's conjugates of each other. For any smooth domain $\oms$ in $\R^N$, we set a Banach space
\begin{equation}\label{X pq}
X_{p,q}\big(\oms\big) := \(W^{2,\frac{p+1}{p}}\big(\oms\big) \cap W_0^{1,p^\ast}\big(\oms\big)\) \times \(W^{2,\frac{q+1}{q}}\big(\oms\big) \cap W_0^{1,q^\ast}\big(\oms\big)\)
\end{equation}
equipped with the norm
$$
\|(u,v)\|_{X_{p,q}(\oms)} := \|\Delta u\|_{L^{\frac{p+1}{p}}(\oms)} + \|\Delta v\|_{L^{\frac{q+1}{q}}(\oms)}.
$$
By the Sobolev embedding theorem, we have that $X_{p,q}(\oms) \subset L^{q+1}(\oms) \times L^{p+1}(\oms)$.
For small $\ep > 0$, we also define an energy functional $I_\ep:X_{p,q}(\Omega_\ep) \to \R$ by
\begin{equation}\label{ene}
I_\ep(u,v) = \int_{\Omega_\ep} \(\nabla u\cdot \nabla v-\frac{1}{p+1}|v|^{p+1}-\frac{1}{q+1}|u|^{q+1}\).
\end{equation}
It is of class $C^2$ and its critical point is a weak solution to \eqref{main system}.

\subsection{Green's functions and related maps}
For any smooth domain $\oms$ in $\R^N$, let $G_{\oms}$ be the Green's function of the Laplacian $-\Delta$ in $\oms$ with the Dirichlet boundary condition
and $H_{\oms}:\oms \times \oms \to \R$ its regular part solving
$$
\begin{cases}
-\Delta_x H_{\oms}(x,y)=0 &\textup{for } x \in \oms,\\
H_{\oms}(x,y)=\dfrac{\ga_N}{|x-y|^{N-2}} &\textup{for } x \in \pa \oms
\end{cases}
$$
where $\ga_N := [(N-2)|\mathbb{S}^{N-1}|]^{-1}$. Set
\[G = G_\Omega,\ H = H_\Omega,\ G_\ep = G_{\Omega_\ep},\ H_\ep = H_{\Omega_\ep},\ G_{\ep,1} = G_{\R^N \setminus \overline{B(0,\ep)}},\ H_{\ep,1} = H_{\R^N\setminus \overline{B(0,\ep)}}.\]
Then $H_{\ep,1}$ is simply written as
\begin{equation}\label{H ep1}
H_{\ep,1}(x,y)= \ga_N\ep^{N-2}\left||y|\(x-\frac{\ep^2y}{|y|^2}\)\right|^{2-N} \quad \textup{for } x, y \in \R^N\setminus \overline{B(0,\ep)}.
\end{equation}
As the next result shows, one can decompose $H_\ep$ into two parts $H$ and $H_{\ep,1}$ modulo a small remainder term.
\begin{lemma}\label{reg est for puntured}
Let $N \ge 3$ and $\ep > 0$ small. For any $x,y \in \Omega_\ep$, it holds that
$$
H_\ep(x,y)=H(x,y) + H_{\ep,1}(x,y) + O(1)\ep^{N-2}\(|x|^{2-N}+|y|^{2-N}\).
$$
\end{lemma}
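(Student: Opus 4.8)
The plan is to compare $H_\ep$ with the explicit candidate $H + H_{\ep,1}$ by exploiting that both are harmonic in $x \in \Omega_\ep$ and applying the maximum principle on the boundary $\partial \Omega_\ep = \partial \Omega \cup \partial B(0,\ep)$. Set $W(x,y) := H_\ep(x,y) - H(x,y) - H_{\ep,1}(x,y)$ for fixed $y \in \Omega_\ep$. Since $H_\ep$, $H$, and $H_{\ep,1}$ are all harmonic in $x$ on $\Omega_\ep$ (the last one because $\R^N \setminus \overline{B(0,\ep)} \supset \Omega_\ep$), the function $x \mapsto W(x,y)$ is harmonic on $\Omega_\ep$. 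Hence, by the maximum principle, $\|W(\cdot,y)\|_{L^\infty(\Omega_\ep)}$ is controlled by the size of $W$ on the two boundary components, and it suffices to estimate $|W(x,y)|$ for $x \in \partial\Omega$ and for $x \in \partial B(0,\ep)$.

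First I would treat $x \in \partial \Omega$. There, $H_\ep(x,y) = \ga_N|x-y|^{2-N} = H(x,y)$, so $W(x,y) = -H_{\ep,1}(x,y)$. Using the explicit formula \eqref{H ep1}, and noting that for $x \in \partial\Omega$, $y \in \Omega_\ep$ the point $x - \ep^2 y/|y|^2$ satisfies $|x - \ep^2 y/|y|^2| \ge \tfrac12 \operatorname{dist}(0,\partial\Omega) > 0$ once $\ep$ is small (since $|y| \ge \ep$ forces $\ep^2/|y| \le \ep$), one gets $|H_{\ep,1}(x,y)| \le C \ep^{N-2} |y|^{N-2} \cdot |y|^{2-N} \cdot (\text{bounded}) = O(1)\,\ep^{N-2}$, which is absorbed into the stated remainder $O(1)\ep^{N-2}(|x|^{2-N}+|y|^{2-N})$ since $|x|^{2-N}$ is bounded below on $\partial\Omega$. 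Next, for $x \in \partial B(0,\ep)$: there $|x| = \ep$ and $H_{\ep,1}(x,y) = \ga_N |x-y|^{2-N}$ by the defining boundary condition of $H_{\ep,1}$, while $H_\ep(x,y) = \ga_N|x-y|^{2-N}$ by the defining boundary condition of $H_\ep$; hence $W(x,y) = -H(x,y)$. Since $H$ is the regular part of Green's function of the fixed domain $\Omega$, it is smooth and bounded on compact subsets, so $|H(x,y)| \le C$ for $x \in \partial B(0,\ep)$, $y \in \Omega_\ep$; and because $|x|^{2-N} = \ep^{2-N}$ on $\partial B(0,\ep)$, we may write $C = O(1)\,\ep^{N-2} \cdot \ep^{2-N} = O(1)\,\ep^{N-2}|x|^{2-N}$, again of the required form. (When $y$ is near $\partial B(0,\ep)$ as well, one should keep track that $H(x,y)$ stays bounded since $\Omega$ is fixed and $x,y$ range over a compact subset of $\overline\Omega$; this is where one uses that only the inner hole, not $\Omega$ itself, degenerates.)

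Finally I would combine the two boundary estimates: on all of $\partial\Omega_\ep$ we have shown $|W(x,y)| \le O(1)\,\ep^{N-2}\big(|x|^{2-N} + |y|^{2-N}\big)$, but to apply the maximum principle cleanly I would instead compare $W(\cdot,y)$ with the harmonic majorant obtained from the barrier built out of $G_\ep$-type quantities — concretely, note that $x \mapsto \ep^{N-2}(H_{\ep,1}(x,y) + \text{const})$ and the function $x \mapsto \ep^{N-2}G_\ep(x,0)$-type combinations are themselves harmonic in $\Omega_\ep$ with controlled boundary behavior, allowing one to dominate $|W(\cdot,y)|$ pointwise by $O(1)\ep^{N-2}(|x|^{2-N} + |y|^{2-N})$ throughout $\Omega_\ep$. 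The cleanest route is: the function $\ep^{N-2}|x|^{2-N}$ is superharmonic (in fact $-\Delta(\ga_N|x|^{2-N})$ is a positive measure at the origin, outside $\Omega_\ep$), hence a valid barrier from above, and $\ep^{N-2}|y|^{2-N}$ is a constant in $x$; adding these gives a supersolution dominating $|W(\cdot,y)|$ on $\partial\Omega_\ep$, so by the comparison principle it dominates on $\Omega_\ep$. The main obstacle is the bookkeeping near $\partial B(0,\ep)$: one must verify that $H(x,y)$ — and not just its size but its contribution relative to the weight $|x|^{2-N} = \ep^{2-N}$ — genuinely stays $O(\ep^{N-2}\cdot\ep^{2-N})$ uniformly as $y$ also approaches the shrinking hole, and symmetrically that $H_{\ep,1}(x,y)$ on $\partial\Omega$ does not pick up a hidden singularity when $y \to \partial B(0,\ep)$; both reduce to the elementary but slightly delicate observation that $\ep^2 y /|y|^2$ lies in $\overline{B(0,\ep)}$ and hence stays uniformly away from $\partial\Omega$, keeping all the relevant distances bounded below.
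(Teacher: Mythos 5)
Your overall strategy — set $W = H_\ep - H - H_{\ep,1}$, note it is harmonic in $x$ on $\Omega_\ep$, estimate its boundary values on the two components of $\pa\Omega_\ep$, and dominate it by the comparison function $C\ep^{N-2}(|x|^{2-N}+|y|^{2-N})$ (harmonic in $x$ on $\Omega_\ep$ since $0 \notin \Omega_\ep$) — is exactly the paper's argument, and your identification of the boundary data ($W = -H_{\ep,1}$ on $\pa\Omega$, $W = -H$ on $\pa B(0,\ep)$) is correct.

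However, there is a genuine error in your estimate of $H_{\ep,1}$ on $\pa\Omega$. From \eqref{H ep1},
\[
H_{\ep,1}(x,y) = \ga_N\,\ep^{N-2}\Bigl||y|\Bigl(x - \tfrac{\ep^2 y}{|y|^2}\Bigr)\Bigr|^{2-N} = \ga_N\,\ep^{N-2}\,|y|^{2-N}\,\Bigl|x - \tfrac{\ep^2 y}{|y|^2}\Bigr|^{2-N}.
\]
There is no $|y|^{N-2}$ factor to cancel the $|y|^{2-N}$: the whole expression $|y|(x - \ep^2 y/|y|^2)$ is raised to the power $2-N$, giving a single factor $|y|^{2-N}$. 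Since on $\pa\Omega$ the last factor is bounded by a constant, the correct estimate is $|H_{\ep,1}(x,y)| \le C\ep^{N-2}|y|^{2-N}$, which is \emph{not} $O(\ep^{N-2})$ uniformly in $y$: when $|y|$ is comparable to $\ep$, this quantity is of order $1$. Your claimed uniform bound $O(\ep^{N-2})$ on $\pa\Omega$, absorbed via the $|x|^{2-N}$ factor, is therefore false. The correct boundary bound must carry the $|y|^{2-N}$ weight, and it is precisely this $|y|$-dependence that forces the $|y|^{2-N}$ term in the lemma's remainder: you then compare $W(\cdot,y)$ against the barrier $C\ep^{N-2}\bigl(|x|^{2-N} + |y|^{2-N}\bigr)$, which dominates $|W|$ on $\pa\Omega$ via its $|y|^{2-N}$ part (constant in $x$) and on $\pa B(0,\ep)$ via its $|x|^{2-N}$ part. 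With that correction the proof goes through exactly as in the paper.
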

\begin{proof}
Fixing $y \in \Omega_\ep$, let $\mcr(x,y)=H_\ep(x,y)-H_{\ep,1}(x,y)-H(x,y)$, which solves
$$
\begin{cases}
-\Delta_x \mcr(x,y)=0 &\textup{for } x \in \Omega_\ep,\\
\mcr(x,y)= -H_{\ep,1}(x,y) &\textup{for } x \in \pa \Omega,\\
\mcr(x,y)= -H(x,y) &\textup{for } x \in \pa B(0,\ep).
\end{cases}
$$
Since
$$
\ep^{N-2}\left||y|\(x-\frac{\ep^2y}{|y|^2}\)\right|^{2-N} \le C \frac{\ep^{N-2}}{|y|^{N-2}} \textup{ for } x \in \pa \Omega \textup{ and } y \in \Omega_\ep
$$
and
$$
H(x,y)\le C\frac{\ep^{N-2}}{|x|^{N-2}} \textup{ for } x \in \pa B(0,\ep) \textup{ and } y \in \Omega_\ep,
$$
the maximum principle yields
\[\sup_{x \in \Omega_\ep}|\mcr(x,y)| \le \sup_{x \in \pa \Omega_\ep}|\mcr(x,y)|\le C\ep^{N-2}\(|x|^{2-N}+|y|^{2-N}\). \qedhere\]
\end{proof}

For later use, we introduce two functions: Assume that $N \ge 3$ and $p \in (\frac{2}{N-2},\frac{N}{N-2})$. Given $y \in \Omega$, let $\wtg_y:\Omega \to \R$ be a function such that
$$
\begin{cases}
-\Delta \wtg_y = G^p(\cdot,y) &\textup{in } \Omega,\\
\wtg_y = 0 &\textup{on } \pa \Omega
\end{cases}
$$
and $\wth_y:\Omega \to \R$ its regular part given by
\begin{equation}\label{wth}
\wth_y(x) = \frac{\tilde{\ga}_{N,p}}{|x-y|^{(N-2)p-2}}-\wtg_y(x) \quad \textup{for } x \in \Omega
\end{equation}
where
\begin{equation}\label{tilde gamma}
\tilde{\ga}_{N,p} := \frac{\ga_N^p}{((N-2)p-2)(N-(N-2)p)} > 0.
\end{equation}
The maximum principle implies that $\wth_y(x) > 0$ for all $x,y \in \Omega$. Thanks to the assumption that $p <\frac{N-1}{N-2}$, we have the following regularity result.
\begin{lemma}[Lemmas 2.1 and 2.11 in \cite{KP}]\label{wth reg}
Assume that $p \in (\frac{2}{N-2},\frac{N-1}{N-2})$ and let $\wth(x,y) = \wth_y(x)$ for $x,y \in \Omega$. Then
\begin{itemize}
\item[-] there exists $\sigma \in (0,1)$ such that $\|\wth_y\|_{C^{1,\sigma}(\overline{\Omega})}$ is uniformly bounded for $y$ in a compact subset of $\Omega$.
    In particular, for each $y \in \Omega$, the map $x \in \overline{\Omega} \mapsto (\nabla_x \wth)(x,y)$ is continuous;
\item[-] for each $x \in \Omega$, the map $y \in \Omega \mapsto (\nabla_y \wth)(x,y)$ is continuous.
\end{itemize}
\end{lemma}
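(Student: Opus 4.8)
The plan is to exhibit $\wth_y$ as the solution of a Poisson problem whose right-hand side, after a cancellation of leading singularities, is integrable to a high power. A direct computation --- which is the origin of the constant \eqref{tilde gamma} --- shows that, for $p\in(\frac{2}{N-2},\frac{N}{N-2})$ (so that $(N-2)p-2\in(0,N-2)$),
\[
-\Delta_x\left(\frac{\tilde{\ga}_{N,p}}{|x-y|^{(N-2)p-2}}\right)=\frac{\ga_N^p}{|x-y|^{(N-2)p}}\quad\textup{in }\R^N\setminus\{y\},
\]
with no concentrated mass at $y$. Combining this with $-\Delta\wtg_y=G^p(\cdot,y)$ and $\wtg_y=0$ on $\pa\Omega$, we see that $\wth_y$ solves
\[
\begin{cases}
-\Delta_x\wth_y(x)=\dfrac{\ga_N^p}{|x-y|^{(N-2)p}}-G^p(x,y)&\textup{in }\Omega,\\
\wth_y(x)=\dfrac{\tilde{\ga}_{N,p}}{|x-y|^{(N-2)p-2}}&\textup{on }\pa\Omega.
\end{cases}
\]
Writing $G(x,y)=\ga_N|x-y|^{2-N}-H(x,y)$ and Taylor-expanding the $p$-th power near $x=y$, where $H$ stays bounded (on $\overline{\Omega}\times K$ for any compact $K\Subset\Omega$) while $\ga_N|x-y|^{2-N}\to\infty$, the top-order terms $\ga_N^p|x-y|^{-(N-2)p}$ cancel, leaving $-\Delta_x\wth_y(x)=p\,\ga_N^{p-1}|x-y|^{-(N-2)(p-1)}H(x,y)$ plus a term bounded near $y$ (here the hypothesis $p<\frac{N-1}{N-2}<2$ is used), whereas away from $y$ --- and in particular near $\pa\Omega$, where $G$ is bounded --- the right-hand side is bounded. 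Since $p<\frac{N-1}{N-2}$ forces $(N-2)(p-1)<1$, this right-hand side lies in $L^s(\Omega)$ for some $s>N$, with norm bounded uniformly for $y\in K$, and the boundary datum is smooth in $x\in\pa\Omega$ with derivatives bounded uniformly for $y\in K$. Granting this, the first assertion follows from Calder\'on--Zygmund $W^{2,s}$-estimates for the Dirichlet problem, which give $\|\wth_y\|_{W^{2,s}(\Omega)}\le C$ uniformly for $y\in K$, together with the Morrey embedding $W^{2,s}(\Omega)\hookrightarrow C^{1,\sigma}(\overline{\Omega})$ with $\sigma=1-\tfrac{N}{s}\in(0,1)$; the continuity of $x\mapsto(\nabla_x\wth)(x,y)$ is then immediate.

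For the continuity in $y$ of $(\nabla_y\wth)(x,y)$, I would differentiate the boundary value problem above in $y_j$; a routine difference-quotient argument, legitimized by the singularity estimates below, shows that $w_j:=\pa_{y_j}\wth_y$ exists and solves the differentiated Dirichlet problem. Differentiating $\ga_N^p|x-y|^{-(N-2)p}-G^p(x,y)$ in $y_j$ and inserting $\pa_{y_j}G(x,y)=\ga_N(N-2)|x-y|^{-N}(x_j-y_j)-\pa_{y_j}H(x,y)$, the same cancellation as before removes the would-be non-integrable kernel of order $|x-y|^{-(N-2)p-1}$; what survives has a singularity at $y$ no worse than $|x-y|^{-\beta}$ with $\beta=\max\{(N-2)(p-1),\,(N-2)p-(N-3)\}$, and --- this is the crux --- the strict inequality $p<\frac{N-1}{N-2}$ gives exactly $\beta<2$. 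Hence the right-hand side of the equation for $w_j$ lies in $L^s(\Omega)$ for some $s>N/2$, so $w_j\in W^{2,s}(\Omega)\hookrightarrow C^0(\overline{\Omega})$; and if $y_n\to y_0$ within a compact neighborhood of $y_0$ in $\Omega$, the right-hand sides converge in $L^s(\Omega)$ --- this uses the translation continuity of $L^s$-functions to handle the shifting singularity $|x-y|^{-\beta}$ --- while the boundary data converge smoothly, so $w_j(\cdot\,;y_n)\to w_j(\cdot\,;y_0)$ in $W^{2,s}(\Omega)$, hence uniformly on $\overline{\Omega}$. In particular, for each fixed $x\in\Omega$ the map $y\mapsto(\nabla_y\wth)(x,y)$ is continuous at every point of $\Omega$.

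The hard part is this last step. Before cancellation, the $y$-derivative of the right-hand side of the $\wth_y$-equation carries two terms of order $|x-y|^{-(N-2)p-1}$ --- one from $\pa_{y_j}(\ga_N^p|x-y|^{-(N-2)p})$, one from $\pa_{y_j}G^p(x,y)$ --- which is essentially $|x-y|^{-N}$ when $p$ is close to $\frac{N-1}{N-2}$, hence badly non-integrable; equivalently, $\pa_{y_j}\wtg_y$ and the explicit singular term in $\wth_y$ each blow up at $x=y$. Only the precise cancellation of these leading terms brings the order down to $|x-y|^{-\beta}$ with $\beta<2$, and the strict bound $\beta<2$ --- which is exactly what $p<\frac{N-1}{N-2}$, as opposed to merely $p<\frac{N}{N-2}$, provides --- is what yields $W^{2,s}$-regularity for some $s>N/2$ and therefore the continuity of the $y$-gradient, although only as a $C^0$ and not a $C^1$ statement. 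This is precisely where the restriction $p<\frac{N-1}{N-2}$ is genuinely needed, consistent with Remark \ref{main rmk}(2).
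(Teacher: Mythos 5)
The paper does not prove this lemma; it cites Lemmas 2.1 and 2.11 of the reference \cite{KP}, so there is no in-paper proof to compare against. Your argument is a sound and essentially complete self-contained proof: deriving the PDE $-\Delta_x \wth_y = \ga_N^p|x-y|^{-(N-2)p} - G^p(\cdot,y)$, observing that the two leading singularities cancel on substituting $G = \ga_N|\cdot-y|^{2-N} - H$, and checking that the residual is $O(|x-y|^{-(N-2)(p-1)})$, which lies in $L^s$ for some $s>N$ precisely because $p < \frac{N-1}{N-2}$ forces $(N-2)(p-1)<1$; the $C^{1,\sigma}$ bound uniform for $y \in K$ then follows from $W^{2,s}$ elliptic estimates (after extending the smooth boundary datum $\tilde\ga_{N,p}|x-y|^{2-(N-2)p}$ into $\Omega$ via a cutoff and reducing to a zero boundary problem, a detail you gloss over but which is routine) and Morrey's embedding. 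Your analysis of the $y$-derivative is also correct: after the same cancellation of the $|x-y|^{-(N-2)p-1}$ kernels, the surviving singularity has order $\beta = (N-2)(p-1)+1$, and the hypothesis $p<\frac{N-1}{N-2}$ gives $\beta<2$, hence $L^s$ for some $s>N/2$ and the $C^0(\overline\Omega)$ conclusion; the $L^s$-continuity in $y$ of the right-hand side that you invoke does indeed justify both the difference-quotient argument for existence of $\pa_{y_j}\wth$ and the stated continuity of $y\mapsto(\nabla_y\wth)(x,y)$. You have correctly pinpointed the exact role of the hypothesis $p<\frac{N-1}{N-2}$, in accordance with Remark \ref{main rmk}(2).
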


\subsection{Limit system}
Let $N \ge 3$ and $(U,V)$ be the unique positive ground state solution to
\begin{equation}\label{limit system}
\begin{cases}
-\Delta u=|v|^{p-1}v \quad \textup{in } \R^N,\\
-\Delta v=|u|^{q-1}u \quad \textup{in } \R^N,\\
\displaystyle (u,v)\in \dot{W}^{2,\frac{p+1}{p}}(\R^N)\times \dot{W}^{2,\frac{q+1}{q}}(\R^N)
\end{cases}
\end{equation}
such that $u(0)=\max_{x \in \R^N} u(x)=1$. According to \cite{ALT}, it is radially symmetric and decreasing in the radial variable.

\begin{lemma}[Theorem 2 in \cite{HV}]
If $p \in (\frac{2}{N-2},\frac{N}{N-2})$, then
$$
\lim_{|x| \to \infty} |x|^{(N-2)p-2}U(x) = a_{N,p} \quad \textup{and} \quad \lim_{|x| \to \infty} |x|^{N-2}V(x) = b_{N,p}
$$
where $a_{N,p}$ and $b_{N,p}$ are positive constants depending only on $N$ and $p$ with
\begin{equation}\label{bnp anp}
b_{N,p}^p = a_{N,p}\((N-2)p-2\)\(N-(N-2)p\).
\end{equation}
\end{lemma}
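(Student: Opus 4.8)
\emph{Proof plan.} By \cite{ALT} the pair $(U,V)$ is radial, so the plan is to reduce \eqref{limit system} to the radial ODE system $-(r^{N-1}U')'=r^{N-1}V^p$, $-(r^{N-1}V')'=r^{N-1}U^q$ (with $r=|x|$, $U=U(r)$, $V=V(r)$) and to read the sharp decay off the associated ``mass functions''. Integrating over $(0,r)$ and setting
\[
w_1(r):=\int_0^r s^{N-1}V^p(s)\,ds,\qquad w_2(r):=\int_0^r s^{N-1}U^q(s)\,ds,
\]
one gets $-r^{N-1}U'(r)=w_1(r)$ and $-r^{N-1}V'(r)=w_2(r)$; in particular $U$ and $V$ are strictly decreasing. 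Since they are positive and, by \eqref{limit system}, satisfy $V\in L^{p+1}(\R^N)$ and $U\in L^{q+1}(\R^N)$, they vanish at infinity, and radial monotonicity gives the crude bounds $U(r)\le Cr^{-N/(q+1)}$ and $V(r)\le Cr^{-N/(p+1)}$.

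The heart of the matter is the a priori claim that $M_2:=w_2(\infty)=\int_0^\infty s^{N-1}U^q(s)\,ds<\infty$, i.e.\ $U^q\in L^1(\R^N)$; granting it, the rest is elementary ODE asymptotics. Indeed, from $-V'(r)=w_2(r)r^{1-N}$ with $w_2$ nondecreasing and $w_2(r)\to M_2$, integrating over $(r,\infty)$ yields $r^{N-2}V(r)\to \frac{M_2}{N-2}=\ga_N\int_{\R^N}U^q=:b_{N,p}>0$. Feeding $V(r)=b_{N,p}r^{2-N}(1+o(1))$ back into $w_1$ and using that $p<\frac{N}{N-2}$ makes the exponent $N-1-(N-2)p$ larger than $-1$, one gets $w_1(r)=\frac{b_{N,p}^p}{N-(N-2)p}\,r^{N-(N-2)p}(1+o(1))$; hence $-U'(r)=w_1(r)r^{1-N}=\frac{b_{N,p}^p}{N-(N-2)p}\,r^{1-(N-2)p}(1+o(1))$, and integration over $(r,\infty)$ — convergent precisely because $p>\frac{2}{N-2}$ — gives $r^{(N-2)p-2}U(r)\to \frac{b_{N,p}^p}{(N-(N-2)p)((N-2)p-2)}=:a_{N,p}>0$. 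Rearranging this is exactly \eqref{bnp anp}; equivalently, \eqref{bnp anp} is the consistency relation obtained by plugging the leading profiles $U\approx a_{N,p}|x|^{2-(N-2)p}$ and $V^p\approx b_{N,p}^p|x|^{-(N-2)p}$ into $-\Delta U=V^p$, since $-\Delta\big(|x|^{2-(N-2)p}\big)=((N-2)p-2)(N-(N-2)p)\,|x|^{-(N-2)p}$.

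The genuinely delicate step is the a priori claim, equivalently the sharp upper bounds $V(r)\le Cr^{2-N}$ and $U(r)\le Cr^{-((N-2)p-2)}$. The crude bounds above do not close by themselves: representing $U$ and $V$ as Newtonian potentials of $V^p$ and $U^q$ (legitimate since both are positive superharmonic functions vanishing at infinity, hence with vanishing greatest harmonic minorant) and running one round of the standard estimate $\int_{\R^N}|y|^{-a}\,|x-y|^{2-N}\,dy=C|x|^{2-a}$, valid for $2<a<N$, merely reproduces the exponents $N/(q+1)$ and $N/(p+1)$, which are fixed points of this iteration. To break the deadlock I would work with the radial ODE directly, passing to the Emden--Fowler variable $t=\log r$ (under which the system becomes autonomous) and showing that the ground-state trajectory, bounded thanks to finite energy, converges as $t\to\infty$ to the equilibrium governing the fast-decay regime; the equilibrium relation is again \eqref{bnp anp}, and linearizing there produces the exact rates $(N-2)p-2$ and $N-2$. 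I expect this step — excluding every decay rate other than the sharp ones — to be the sole real obstacle; everything else is the computation sketched above.
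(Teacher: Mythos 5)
The paper does not prove this lemma; it cites it to Hulshof--van der Vorst \cite{HV}, so there is no internal proof to compare against. Judged on its own, your argument is half right and you are candid about the half that is missing.

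The part you do carry out is sound: the radial reduction, the identities $-r^{N-1}U'=w_1$, $-r^{N-1}V'=w_2$, the crude radial-lemma bounds $U\le Cr^{-N/(q+1)}$, $V\le Cr^{-N/(p+1)}$, and then, \emph{granting} $\int_{\R^N}U^q<\infty$, the chain
$r^{N-2}V\to\ga_N\int U^q=:b_{N,p}$, $w_1\sim\frac{b_{N,p}^p}{N-(N-2)p}r^{N-(N-2)p}$, $r^{(N-2)p-2}U\to\frac{b_{N,p}^p}{((N-2)p-2)(N-(N-2)p)}=:a_{N,p}$. This recovers \eqref{bnp anp} exactly, and the ranges of convergence match the hypothesis $p\in(\tfrac{2}{N-2},\tfrac{N}{N-2})$. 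You are also right that a naive potential-theoretic bootstrap stalls at the ``slow'' exponents $N/(q+1)$, $N/(p+1)$, which are genuine fixed points of that iteration.

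The genuine gap is precisely the claim you flag yourself: $U^q\in L^1(\R^N)$, equivalently that $U$ decays strictly faster than $r^{-N/(q+1)}$. This is the entire content of Theorem~2 in \cite{HV}, and you reduce the lemma to it without proving it. Your proposed repair --- pass to Emden--Fowler variables $t=\log r$, use boundedness of the orbit, and show convergence to the fast-decay equilibrium --- is indeed the route of \cite{HV}, but the assertion that ``the ground-state trajectory \ldots converges as $t\to\infty$ to the equilibrium governing the fast-decay regime'' is exactly the nontrivial dichotomy that the phase-plane analysis has to establish: one must rule out convergence to the interior (slow-decay) equilibrium and rule out non-convergent behaviour, which requires a concrete Lyapunov/monotonicity argument or stable-manifold analysis that you do not supply. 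Two further inaccuracies in the sketch: (i) \eqref{bnp anp} is \emph{not} an equilibrium relation of the autonomous system --- in the slow-decay normalisation the fast-decay regime corresponds to the origin, where the equilibrium equations are trivially satisfied; \eqref{bnp anp} arises only after one has the rates, as your earlier ODE computation shows. (ii) Linearisation at the origin of the slow-normalised system gives the gap between the slow and fast exponents, not the fast exponents themselves; extracting the rates $(N-2)p-2$ and $N-2$ cleanly requires a second normalisation (or the direct integration you already did, once $U^q\in L^1$ is known). So the plan is pointed in the right direction, but as written it assumes the conclusion at the one step that matters.
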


\begin{lemma}[Corollaries 2.6 and 2.7 in \cite{KP}, Lemma 2.2 in \cite{KM}]\label{limit decay}
Suppose that $N \ge 3$ and $p \in (\frac{2}{N-2},\frac{N}{N-2})$. Then there exist some $\kappa_0, \kappa_1 \in (0,1)$ such that
\begin{equation}\label{decay for u}
\left|U(x)-\frac{a_{N,p}}{|x|^{(N-2)p-2}}\right| = O\(\frac{1}{|x|^{(N-2)p-1}}\),
\end{equation}
\begin{equation}\label{decay for v}
\left|V(x)-\frac{b_{N,p}}{|x|^{N-2}}\right| = O\(\frac{1}{|x|^{N-1}}\),
\end{equation}
\begin{equation}\label{decay for grad u}
\left|\nabla U(x)+((N-2)p-2)a_{N,p}\frac{x}{|x|^{(N-2)p}}\right|= O\(\frac{1}{|x|^{(N-2)p-1 +\kappa_0}}\),
\end{equation}
\begin{equation}\label{decay for grad v}
\left|\nabla V(x)+(N-2)b_{N,p}\frac{x}{|x|^N}\right| = O\(\frac{1}{|x|^{N-1+\kappa_1}}\)
\end{equation}
for $|x| \ge 1$.
\end{lemma}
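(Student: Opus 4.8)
The plan is to exploit the radial structure of $(U,V)$ rather than a direct potential estimate. Since $(U,V)$ is a positive, radially decreasing solution of \eqref{limit system}, which is classical by standard elliptic regularity, writing $U=U(r)$, $V=V(r)$ with $r=|x|$ gives $U,V\in C^2([0,\infty))$ with $U'(0)=V'(0)=0$, and multiplying the equations by $r^{N-1}$ and integrating produces the first-order identities
\[
r^{N-1}U'(r)=-\int_0^r s^{N-1}V^p(s)\,\textup{d}s,\qquad r^{N-1}V'(r)=-\int_0^r s^{N-1}U^q(s)\,\textup{d}s.
\]
Throughout I would use the two algebraic relations coming from \eqref{critical} and \eqref{bnp anp}, namely $\big((N-2)p-2\big)q=N+2p+2$ and $b_{N,p}^{\,p}=a_{N,p}\big((N-2)p-2\big)\big(N-(N-2)p\big)$, together with the leading-order asymptotics $U(r)\sim a_{N,p}r^{-((N-2)p-2)}$ and $V(r)\sim b_{N,p}r^{-(N-2)}$ recalled just above. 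The crucial structural point is that $(N-2)p<N$: the convolution $\gamma_N\,|x|^{2-N}\ast V^p$ that would represent $U$ \emph{does not converge}, and equivalently the first integral above \emph{diverges} like $r^{N-(N-2)p}$ as $r\to\infty$. This is why the argument must be a bootstrap in the order ``$V$ first, then $U$'', and why $U$ decays unusually slowly.

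\emph{Step 1: $V$ and $\nabla V$.} From $U(r)\le Cr^{-((N-2)p-2)}$ for $r\ge1$ and $\big((N-2)p-2\big)q=N+2p+2$, the integrand $s^{N-1}U^q(s)$ is $O(s^{-(2p+3)})$ at infinity and bounded near $0$, so $M:=\int_0^\infty s^{N-1}U^q(s)\,\textup{d}s\in(0,\infty)$ and $\int_0^r s^{N-1}U^q=M+O(r^{-(2p+2)})$. The second identity then gives $V'(r)=-Mr^{-(N-1)}+O(r^{-(N+1+2p)})$; integrating from $r$ to $\infty$ (valid since $V(\infty)=0$ and $N\ge3$) gives $V(r)=\tfrac{M}{N-2}r^{-(N-2)}+O(r^{-(N+2p)})$, whence comparison with $r^{N-2}V(r)\to b_{N,p}$ forces $M=(N-2)b_{N,p}$. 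Thus $V(r)=b_{N,p}r^{-(N-2)}+O(r^{-(N+2p)})$ and $V'(r)=-(N-2)b_{N,p}r^{-(N-1)}+O(r^{-(N+1+2p)})$; since $\nabla V(x)=V'(|x|)x/|x|$ and $N+2p,\ N+1+2p>N-1$, this establishes \eqref{decay for v} and \eqref{decay for grad v} for any $\kappa_1\in(0,1)$.

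\emph{Step 2: $U$ and $\nabla U$.} By Step 1, $V^p(r)=b_{N,p}^{\,p}r^{-(N-2)p}+O(r^{-(N-2)p-2p-2})$ for $r$ large. Inserting this into the first identity, using $(N-2)p<N$ (so that $\int s^{N-1-(N-2)p}$ integrates to a multiple of $r^{N-(N-2)p}$) together with the standing assumption $p>1$, hence $p>\tfrac{N-2}{N}$ (so that the remainder integral $\int_1^r s^{N-3-Np}\,\textup{d}s$ converges), yields $\int_0^r s^{N-1}V^p=\tfrac{b_{N,p}^{\,p}}{N-(N-2)p}\,r^{N-(N-2)p}+O(1)$. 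By \eqref{bnp anp}, $\tfrac{b_{N,p}^{\,p}}{N-(N-2)p}=a_{N,p}\big((N-2)p-2\big)$, so
\[
U'(r)=-\big((N-2)p-2\big)a_{N,p}\,r^{-((N-2)p-1)}+O\big(r^{-(N-1)}\big)=\tfrac{d}{dr}\!\Big(a_{N,p}r^{-((N-2)p-2)}\Big)+O\big(r^{-(N-1)}\big).
\]
Writing $\nabla U(x)=U'(|x|)x/|x|$ and choosing $\kappa_0\in\big(0,\min\{1,\,N-(N-2)p\}\big)$ gives \eqref{decay for grad u}. Integrating the last display from $r$ to $\infty$ — legitimate because $U(\infty)=0$ and, by the hypothesis $p>\tfrac{2}{N-2}$, the exponent $(N-2)p-1$ exceeds $1$ — gives $U(r)=a_{N,p}r^{-((N-2)p-2)}+O(r^{-(N-2)})$; since the paper's assumption $p<\tfrac{N-1}{N-2}$ gives $N-2\ge(N-2)p-1$, this is \eqref{decay for u}.

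The main obstacle is exactly the failure of the Newtonian-potential representation for $U$ when $(N-2)p<N$: one cannot split $\gamma_N|x|^{2-N}\ast V^p$ into near- and far-field in the usual way, and must instead read off from the radial identity the precise divergent growth $\tfrac{b_{N,p}^{\,p}}{N-(N-2)p}r^{N-(N-2)p}$ of $\int_0^r s^{N-1}V^p$ with a genuinely bounded remainder — it is the relation \eqref{bnp anp} between $a_{N,p}$ and $b_{N,p}$ that makes the slow profile $a_{N,p}|x|^{-((N-2)p-2)}$ consistent with $-\Delta U=V^p$. Ensuring the Step 2 remainder is $O(1)$ rather than a power of $r$ is precisely where $p>1$ is used, and the two-step order is forced because Step 2 needs the sharpened $V^p$ asymptotics obtained in Step 1.
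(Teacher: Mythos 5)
Your argument is sound and self-contained where the paper merely cites \cite{KP,KM}: passing to the radial integral identities $r^{N-1}U'(r)=-\int_0^r s^{N-1}V^p$, $r^{N-1}V'(r)=-\int_0^r s^{N-1}U^q$ and bootstrapping in the order ``$V$ first, then $U$'' is a clean route, and the key computations check out — $((N-2)p-2)q=N+2p+2$ makes $\int_0^\infty s^{N-1}U^q$ finite with an $O(r^{-(2p+2)})$ tail, the matching $M=(N-2)b_{N,p}$ is forced by the known limit $r^{N-2}V(r)\to b_{N,p}$, and in Step 2 the identity \eqref{bnp anp} converts $\tfrac{b_{N,p}^p}{N-(N-2)p}$ into $a_{N,p}((N-2)p-2)$ exactly as needed. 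Two inaccuracies should be flagged. First, your motivational claim that the convolution $\ga_N|x|^{2-N}\ast V^p$ ``does not converge'' is false: since $V^p(y)\sim b_{N,p}^p|y|^{-(N-2)p}$ and $(N-2)(p+1)>N$ (equivalently $(N-2)p>2$), the Newtonian potential of $V^p$ is finite at every point; what fails is only $V^p\in L^1(\R^N)$, so one cannot replace the potential by $\ga_N|x|^{2-N}\int V^p$ at leading order. This does not affect your proof, which never uses the representation, but the sentence as written is wrong and the claimed ``equivalence'' with the divergence of $\int_0^r s^{N-1}V^p$ does not hold.

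Second, your proof establishes the lemma only on a strictly smaller range of $p$ than stated. The lemma is asserted for all $p\in(\frac{2}{N-2},\frac{N}{N-2})$, but (i) your Step 2 invokes $p>1$ to make $\int_1^r s^{N-3-Np}\,\textup{d}s$ converge (for $N\ge 6$ one can have $\frac{2}{N-2}<p\le\frac{N-2}{N}$, where this fails as written, though the resulting polynomially growing remainder would still be absorbable after a short extra computation), and (ii) your bound for \eqref{decay for u} is $O(r^{-(N-2)})$, which implies $O(r^{-((N-2)p-1)})$ only when $p\le\frac{N-1}{N-2}$. Point (ii) is not merely a lost constant: your own expansion shows that $\int_0^r s^{N-1}V^p$ has a genuine constant $C_0$ in its remainder, which upon integration contributes a term of exact order $r^{-(N-2)}$ to $U$, so for $p>\frac{N-1}{N-2}$ the statement \eqref{decay for u} would require $C_0=0$ and is not reachable by this argument. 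Since the paper only ever applies the lemma under its standing hypothesis $p\in(1,\frac{N-1}{N-2})$, your proof suffices for everything the paper does, but you should either restrict the hypotheses of the lemma accordingly or say explicitly that the full stated range is taken from \cite{KP,KM}.
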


Given $(\mu,\xi) \in (0,\infty) \times \R^N$, we define
\begin{equation}\label{UVmx}
(U_\mx(x),V_\mx(x)) = \(\mu^{-\frac{N}{q+1}}U(\mu^{-1}(x-\xi)),\mu^{-\frac{N}{p+1}}V(\mu^{-1}(x-\xi))\) \quad \textup{for } x \in \R^N
\end{equation}
so that $(U,V) = (U_{1,0},V_{1,0})$. According to \cite{Wa, HV}, they constitute the entire set of solutions to \eqref{limit system}. They are also non-degenerate as the next proposition states.

\begin{proposition}[Theorem 1 in \cite{FKP}]\label{nondeg}
We define
\begin{equation}\label{PsPhmxl10}
\begin{aligned}
\(\Psi_{1,0}^0(x),\Phi_{1,0}^0(x)\) &= \(x \cdot \nabla U(x) + \frac{NU(x)}{q+1},\, x \cdot \nabla V(x) + \frac{NV(x)}{p+1}\),\\
\big(\Psi_{1,0}^l(x),\Phi_{1,0}^l(x)\big) &= \(\pa_{x_l}U(x), \pa_{x_l}V(x)\)
\end{aligned}
\end{equation}
for $x \in \R^N$ and $l = 1,\ldots,N$. Also, given $\mu > 0$ and $\xi \in \R^N$, we set
\begin{equation}\label{PsPhmxl}
\big(\Psi_\mx^l(x), \Phi_\mx^l(x)\big) = \(\mu^{-\frac{N}{q+1}-1} \Psi_{1,0}^l (\mu^{-1}(x-\xi)), \mu^{-\frac{N}{p+1}-1} \Phi_{1,0}^l (\mu^{-1}(x-\xi))\)
\end{equation}
for $x \in \R^N$ and $l = 0,1,\ldots,N$. Then the space of solutions to the linear system
\[\begin{cases}
-\Delta \Psi = pV_\mx^{p-1} \Phi \quad \textup{in } \R^N,\\
-\Delta \Phi = qU_\mx^{q-1} \Psi \quad \textup{in } \R^N,\\
(\Psi,\Phi) \in \dot{W}^{2,\frac{p+1}{p}}(\R^N) \times \dot{W}^{2,\frac{q+1}{q}}(\R^N)
\end{cases}\]
is spanned by
\[\left\{\(\Psi_\mx^0, \Phi_\mx^0\), \(\Psi_\mx^1,\Phi_\mx^1\), \ldots, \(\Psi_\mx^N,\Phi_\mx^N\)\right\}.\]
\end{proposition}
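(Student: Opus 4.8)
The plan is to establish the two inclusions separately. That the listed $N+1$ pairs lie in the kernel is a direct computation: since $(U_\mx,V_\mx)$ solves \eqref{limit system} for every $(\mu,\xi)$, differentiating in $\xi_l$ at $(\mu,\xi)=(1,0)$ shows that $(\pa_{x_l}U,\pa_{x_l}V)=(\Psi^l_{1,0},\Phi^l_{1,0})$ solves the linearized system, and differentiating in $\mu$ does the same for $(\Psi^0_{1,0},\Phi^0_{1,0})$; scaling then gives the analogous statement for $(\Psi^l_\mx,\Phi^l_\mx)$ at general $(\mu,\xi)$, and the decay estimates of Lemma \ref{limit decay} guarantee that all of them lie in $\dot W^{2,\frac{p+1}{p}}\(\R^N\)\times\dot W^{2,\frac{q+1}{q}}\(\R^N\)$. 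The substance of the proposition is the reverse inclusion, so from now on I fix a solution $(\Psi,\Phi)$ in the energy space and aim to show it is a linear combination of the $(\Psi^l_\mx,\Phi^l_\mx)$, $l=0,\dots,N$.

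First I would reduce to a family of ODE problems by separation of variables. Since $(U,V)$ is radial, expand $\Psi(x)=\sum_{k\ge0}\psi_k(|x|)\,Y_k(x/|x|)$ and $\Phi(x)=\sum_{k\ge0}\phi_k(|x|)\,Y_k(x/|x|)$, where $\{Y_k\}$ is an $L^2(\mathbb{S}^{N-1})$-orthonormal basis of eigenfunctions of $-\Delta_{\mathbb{S}^{N-1}}$ with eigenvalues $\lambda_k=k(k+N-2)$, so that for each mode the pair $(\psi_k,\phi_k)$ solves
\[
\begin{cases}
-\psi_k''-\dfrac{N-1}{r}\psi_k'+\dfrac{\lambda_k}{r^2}\psi_k=pV^{p-1}\phi_k, & r\in(0,\infty),\\[2mm]
-\phi_k''-\dfrac{N-1}{r}\phi_k'+\dfrac{\lambda_k}{r^2}\phi_k=qU^{q-1}\psi_k, & r\in(0,\infty),
\end{cases}
\]
subject to regularity at $r=0$ and to the integrability at $r=\infty$ forced by $(\Psi,\Phi)\in\dot W^{2,\frac{p+1}{p}}\(\R^N\)\times\dot W^{2,\frac{q+1}{q}}\(\R^N\)$. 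Matching with \eqref{PsPhmxl10}, it suffices to prove: the mode $k=0$ problem has a one-dimensional space of admissible solutions, spanned by $\(r\mapsto rU'(r)+\tfrac{N}{q+1}U(r),\ rV'(r)+\tfrac{N}{p+1}V(r)\)$; each mode $k=1$ problem has a one-dimensional space of admissible solutions, spanned by $(U',V')$; and for every $k\ge2$ the only admissible solution is $(0,0)$.

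Next I would carry out the ODE analysis for fixed $k$. Each system has a four-dimensional solution space; imposing regularity at the origin — where the admissible local behaviors are $\psi_k,\phi_k\sim r^k$ and the excluded ones $\sim r^{2-N-k}$ — cuts this to at most two dimensions. To handle $r\to\infty$ I would pass to Emden--Fowler variables, writing $U(x)=|x|^{-\frac{N}{q+1}}W(\log|x|)$, $V(x)=|x|^{-\frac{N}{p+1}}Z(\log|x|)$ and likewise for $(\Psi,\Phi)$, so that the profile equations become autonomous and the linearized system becomes an asymptotically constant-coefficient system on the cylinder whose characteristic exponents can be computed explicitly; here the two distinct decay rates $(N-2)p-2$ and $N-2$ of Lemma \ref{limit decay} enter the indicial computation. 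Membership in the energy space selects the decaying exponents, removing one further dimension for each $k$. For $k\ge2$ I expect this counting alone to give triviality, whereas for $k=0,1$ a borderline resonance occurs and must be resolved by hand, by checking that the explicit solutions above already span the admissible decaying directions.

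I expect the main obstacle to be the rigorous exclusion of nontrivial admissible solutions for $k\ge2$ (together with the sharp dimension count for $k=0,1$), since the linearized energy form is strongly indefinite and a naive first-eigenvalue argument is unavailable. The clean route is a Sturm-type/Wronskian comparison keyed to the $k=1$ mode: $U'<0$ and $V'<0$ on $(0,\infty)$, so the solution $(U',V')$ of the $k=1$ system changes no sign, which places $\lambda_1=N-1$ at the bottom of the relevant family in the parameter $\lambda_k$. Performing the ground-state substitution $\psi_k=U'w$, $\phi_k=V'z$ and integrating the resulting equations against well-chosen test functions turns the gap $\lambda_k-\lambda_1>0$ into a strictly sign-definite term, forcing $w,z$ to be constant and hence $w=z=0$ for $k\ge2$; the boundary terms produced in these integrations by parts are controlled near $r=0$ by the regularity there and near $r=\infty$ by Lemma \ref{limit decay}. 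Combining the three mode ranges with the first step then identifies the kernel with $\textup{span}\{(\Psi^l_\mx,\Phi^l_\mx):l=0,\dots,N\}$, completing the proof.
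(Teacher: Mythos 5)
This proposition is not proved in the paper at hand: it is imported verbatim as Theorem 1 of \cite{FKP}, so there is no internal proof to compare against. Your overall framework --- checking membership of the $N+1$ known pairs in the kernel and in the energy space, then decomposing a putative kernel element into spherical harmonics and reducing to coupled ODE systems mode by mode --- is the correct one and is indeed broadly the architecture of the proof in \cite{FKP}. The easy half of your argument (the inclusion $\supseteq$ and the energy-space membership via Lemma \ref{limit decay}) is fine.

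The genuine gap is in the exclusion of the modes $k\ge 2$, which is the crux of the whole theorem and which your sketch does not close. After the ground-state substitution $\psi_k=U'w$, $\phi_k=V'z$ and use of the $k=1$ equations satisfied by $(U',V')$, one finds
\begin{align*}
pV^{p-1}V'(w-z)+\frac{\lambda_k-(N-1)}{r^2}U'w &= U'w''+2U''w'+\frac{N-1}{r}U'w', \\
qU^{q-1}U'(z-w)+\frac{\lambda_k-(N-1)}{r^2}V'z &= V'z''+2V''z'+\frac{N-1}{r}V'z'.
\end{align*}
Multiplying by $-r^{N-1}U'w$ and $-r^{N-1}V'z$ and integrating, the diagonal pieces are indeed sign-definite --- one gets $\int r^{N-1}(U')^2(w')^2$, $\int r^{N-1}(V')^2(z')^2$, and $(\lambda_k-\lambda_1)$ times positive integrals, as you expect --- but the off-diagonal coupling leaves you with the pair of constraints
\[
\int_0^\infty pV^{p-1}V'U'\,w(w-z)\,r^{N-1}\,\textup{d}r \le 0, \qquad
\int_0^\infty qU^{q-1}U'V'\,z(z-w)\,r^{N-1}\,\textup{d}r \le 0,
\]
whose positive weights $pV^{p-1}(-V')(-U')$ and $qU^{q-1}(-U')(-V')$ are in general distinct. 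Naive addition therefore does not produce $\int(\text{nonneg.})(w-z)^2$, and a variable reweighting $a(r),b(r)$ of the two identities destroys the total-derivative structure on the right, introducing a new indefinite term $\int a'r^{N-1}(U')^2ww'$. Your sketch asserts that ``well-chosen test functions'' turn $\lambda_k-\lambda_1>0$ into a sign-definite contribution, but it does not say how the coupling is to be symmetrized, and this is exactly where the Hamiltonian (non-gradient) structure of the linearization makes the scalar Sturm/ground-state argument fail to transplant. Resolving this mismatch is the real work of the non-degeneracy proof, and until it is supplied the argument is incomplete.
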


\section{Approximation of the solution}\label{sec app}
\subsection{Admissible set of parameters}
Given a small fixed number $\delta \in (0,1)$ to be determined in Section \ref{sec comp}, we define the admissible set of parameters
\begin{equation}\label{Lambda delta}
\Lambda_\delta = \left\{(d,\tau): d\in [\delta,\delta^{-1}],\ \tau = (\tau_1,\ldots,\tau_N) \in \overline{B(0,\delta^{-1})}\right\}.
\end{equation}
Also, letting
\begin{equation}\label{mu ep}
\mu_\ep = \ep^\alpha \quad \textup{where } \alpha = \frac{N-2}{(N-2)p+N-4} \in (0,1) \quad \textup{for } N \ge 4 \textup{ and } p > 1,
\end{equation}
we write
\begin{equation}\label{trans and scaling}
\mu = \mu_\ep d \quad \textup{and} \quad \xi = \mu\tau = \mu_\ep d\tau.
\end{equation}
The following relations will be often useful:
\begin{equation}\label{simple rel}
\frac{N(p+1)}{q+1}=(N-2)p-2 \quad \text{and} \quad \(\frac{\ep}{\mu_\ep}\)^{N-2} = \mu_\ep^{(N-2)p-2}.
\end{equation}

\subsection{The first approximation of the solution}
We analyze the linear projection of $(U_\mx,V_\mx)$ in \eqref{UVmx} into the space $W_0^{1,p^\ast}(\Omega_\ep) \times W_0^{1,q^\ast}(\Omega_\ep)$; see \eqref{pq star} for the definition of the pair $(p^*,q^*)$.
It will serve as the first approximation of the positive solution to \eqref{main system}. More precisely, we consider the system
\begin{equation}\label{PUPV eq}
\begin{cases}
-\Delta PU_\mx=V_\mx^p &\textup{in } \Omega_\ep,\\
-\Delta PV_\mx=U_\mx^q &\textup{in } \Omega_\ep,\\
PU_\mx=PV_\mx=0 &\textup{on } \pa \Omega_\ep.
\end{cases}
\end{equation}
\begin{lemma}\label{first app}
Assume $N \ge 4$, $p \in (1,\frac{N}{N-2})$, and $\ep > 0$ small. Let $\whh:\Omega \times \Omega\to \R$ be a smooth function such that
\begin{equation}\label{whh}
\begin{cases}
-\Delta_x \whh(x,y)=0 &\textup{for } x \in \Omega,\\
\displaystyle \whh(x,y)=\frac{1}{|x-y|^{(N-2)p-2}} &\textup{for } x \in \pa \Omega.
\end{cases}
\end{equation}
Then, for $x \in \Omega_\ep$,
\begin{align}
PU_\mx(x) &= U_\mx(x) - a_{N,p}\, \mu^{\frac{Np}{q+1}}\whh(x,\xi) - \mu^{-\frac{N}{q+1}}U(\tau)\frac{\ep^{N-2}}{|x|^{N-2}} + R_1(x), \label{approx for pu}\\
PV_\mx(x) &= V_\mx(x) - \frac{b_{N,p}}{\ga_N}\mu^{\frac{N}{q+1}}H(x,\xi) - \mu^{-\frac{N}{p+1}}V(\tau)\frac{\ep^{N-2}}{|x|^{N-2}} + R_2(x). \label{approx for pv}
\end{align}
Here, $R_1$ and $R_2$ are the remainder terms such that
\begin{align*}
|R_1(x)| &= O(1)\left[\mu^{\frac{Np}{q+1}} \mu + \ep^{N-2}\mu^{-\frac{N}{q+1}}|x|^{2-N}\(\mu^{(N-2)p-2} + \frac{\ep}{\mu}\)\right],\\
|R_2(x)| &= O(1)\left[\mu^{\frac{N}{q+1}} \left\{\mu + \(\frac{\ep}{\mu}\)^{N-2}\right\} + \ep^{N-2}\mu^{-\frac{N}{p+1}} |x|^{2-N} \(\mu^{N-2}+\frac{\ep}{\mu}\)\right]
\end{align*}
where $C > 0$ in \eqref{O(1)} is chosen uniformly for $(d,\tau)\in \Lambda_\delta$.
\end{lemma}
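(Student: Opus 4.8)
The plan is to decompose each projection into its ``mass'' (the profile $U_\mx$ or $V_\mx$) minus the harmonic correction that restores the Dirichlet boundary condition on $\partial\Omega_\ep = \partial\Omega \cup \partial B(0,\ep)$, and then to expand that harmonic correction using the far-field behavior of $U,V$ from Lemma~\ref{limit decay} together with the Green-function decomposition of Lemma~\ref{reg est for puntured}. First I would set $\varphi_1 = U_\mx - PU_\mx$, which is harmonic in $\Omega_\ep$ and equals $U_\mx$ on $\partial\Omega_\ep$. On $\partial\Omega$, we have $|x-\xi|$ comparable to a positive constant, so by \eqref{decay for u} (rescaled) $U_\mx(x) = a_{N,p}\mu^{\frac{Np}{q+1}}|x-\xi|^{2-(N-2)p} + O(\mu^{\frac{Np}{q+1}}\mu)$ uniformly for $(d,\tau)\in\Lambda_\delta$; on $\partial B(0,\ep)$, since $|x-\xi|\sim\mu$ there, $U_\mx(x) = \mu^{-\frac{N}{q+1}}U(\mu^{-1}(x-\xi)) = \mu^{-\frac{N}{q+1}}U(-\tau) + O(\ldots)$ by continuity/smoothness of $U$ near the (bounded) point $-\tau$, and $U(-\tau)=U(\tau)$ by radial symmetry. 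Matching these boundary data against the explicit model harmonic functions — $\mu^{\frac{Np}{q+1}}\whh(x,\xi)$ for the $\partial\Omega$ part (using \eqref{whh}) and $\mu^{-\frac{N}{q+1}}U(\tau)\ep^{N-2}|x|^{2-N}$ for the $\partial B(0,\ep)$ part (which equals $\mu^{-\frac{N}{q+1}}U(\tau)$ on $\partial B(0,\ep)$) — and applying the maximum principle to the difference gives \eqref{approx for pu} with the stated bound on $R_1$; the two error contributions $\mu^{\frac{Np}{q+1}}\mu$ from the $\partial\Omega$ data and $\ep^{N-2}\mu^{-\frac{N}{q+1}}|x|^{2-N}\mu^{(N-2)p-2}$ (cross term: the model function $\whh(x,\xi)$ is not exactly $|x-\xi|^{2-(N-2)p}$ near $\partial B(0,\ep)$, contributing $O(\mu^{\frac{Np}{q+1}})$ there, which times $\ep^{N-2}|x|^{2-N}$ via maximum principle, and one uses \eqref{simple rel} to convert $(\ep/\mu)^{N-2}$ factors) plus $\ep^{N-2}\mu^{-\frac{N}{q+1}}|x|^{2-N}\cdot\frac{\ep}{\mu}$ (from the $U(\mu^{-1}(x-\xi))-U(\tau)$ Lipschitz error on $\partial B(0,\ep)$) are exactly the terms displayed.

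For \eqref{approx for pv} I would proceed identically with $\varphi_2 = V_\mx - PV_\mx$. On $\partial\Omega$, by \eqref{decay for v}, $V_\mx(x) = b_{N,p}\mu^{\frac{N}{q+1}}|x-\xi|^{2-N} + O(\mu^{\frac{N}{q+1}}\mu)$; here the model harmonic function is $\frac{b_{N,p}}{\ga_N}\mu^{\frac{N}{q+1}}H(x,\xi)$, since $H(x,\xi) = \ga_N|x-\xi|^{2-N}$ on $\partial\Omega$. On $\partial B(0,\ep)$, $V_\mx(x) = \mu^{-\frac{N}{p+1}}V(\tau) + O(\ldots)$, matched by $\mu^{-\frac{N}{p+1}}V(\tau)\ep^{N-2}|x|^{2-N}$. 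The one genuinely new feature compared with the $U$-case is that the cross term at $\partial B(0,\ep)$ coming from $H(x,\xi)$: there $H(x,\xi) = O(1)$ (not of size $|x-\xi|^{2-N}\sim\mu^{2-N}$), so the correction $\frac{b_{N,p}}{\ga_N}\mu^{\frac{N}{q+1}}H(x,\xi)$ contributes $O(\mu^{\frac{N}{q+1}})$ on $\partial B(0,\ep)$, which by the maximum principle spreads to $O(\mu^{\frac{N}{q+1}})\ep^{N-2}|x|^{2-N}$; using \eqref{simple rel}, $\mu^{\frac{N}{q+1}}(\ep/\mu)^{N-2}\cdot\mu^{N/(p+1)} \cdot \text{stuff}$ — more directly, this is absorbed into the $\mu^{\frac{N}{q+1}}(\ep/\mu)^{N-2}$ term displayed in the $R_2$-bound (which has no analogue in $R_1$ precisely because $\whh(x,\xi)$ stays of order $1$ relative to the dominant $\mu^{\frac{Np}{q+1}}$ scale there, whereas $\mu^{-\frac{N}{q+1}}U(\tau)\ep^{N-2}|x|^{2-N}$ is the dominant piece and the $\whh$ cross term is lower order — one checks the exponents). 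The remaining error $\ep^{N-2}\mu^{-\frac{N}{p+1}}|x|^{2-N}\frac{\ep}{\mu}$ is again the Lipschitz error of $V$ on $\partial B(0,\ep)$.

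I expect the main obstacle to be bookkeeping rather than a deep difficulty: one must verify that every cross term produced by ``the model harmonic function is not exactly the singular model near the \emph{other} boundary component'' is of the claimed order, and this requires careful use of \eqref{simple rel} and the precise exponents $\frac{Np}{q+1}=(N-2)p-2$, $\alpha=\frac{N-2}{(N-2)p+N-4}$ to see which of the several competing small quantities $\mu$, $\ep/\mu$, $\mu^{(N-2)p-2}=(\ep/\mu)^{N-2}$ dominates. A secondary point requiring a small argument is that the error estimates hold \emph{uniformly} in $(d,\tau)\in\Lambda_\delta$: this follows because on $\Lambda_\delta$ the quantities $d$, $|\tau|$ are bounded and bounded away from the relevant degeneracies, so $U(\tau),V(\tau)$ and the $O(\cdot)$ constants in Lemma~\ref{limit decay}, together with $\|\whh(\cdot,\xi)\|_\infty$ and $\|H(\cdot,\xi)\|_\infty$ for $\xi$ in a compact subset of $\Omega$, are all uniformly controlled. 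Throughout, positivity of $U_\mx,V_\mx$ and the harmonicity of $\varphi_1,\varphi_2$ justify the repeated appeals to the maximum principle exactly as in the proof of Lemma~\ref{reg est for puntured}.
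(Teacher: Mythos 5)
Your proposal follows the same strategy as the paper's proof: observe that the remainder is harmonic in $\Omega_\ep$, estimate it separately on $\partial\Omega$ (via the far-field expansion \eqref{decay for u} or \eqref{decay for v}) and on $\partial B(0,\ep)$ (via Lipschitz continuity of $U$, $V$ near $-\tau$), and then apply the maximum principle with a barrier of the form $a + b\,\ep^{N-2}|x|^{2-N}$; the paper carries this out explicitly for $R_1$ and declares $R_2$ analogous. One small bookkeeping slip in your $R_2$ discussion: the $H(x,\xi)$ cross term on $\partial B(0,\ep)$ is $O(\mu^{\frac{N}{q+1}}) = O(\mu^{-\frac{N}{p+1}}\mu^{N-2})$ and propagates as $\ep^{N-2}\mu^{-\frac{N}{p+1}}|x|^{2-N}\mu^{N-2}$ (the $\mu^{N-2}$ piece of the second bracketed term in the stated bound), not into the $\mu^{\frac{N}{q+1}}(\ep/\mu)^{N-2}$ piece of the first term, which instead comes from evaluating $\mu^{-\frac{N}{p+1}}V(\tau)\ep^{N-2}|x|^{2-N}$ on $\partial\Omega$.
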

\begin{proof}
We will treat \eqref{approx for pu} only, because the proof of \eqref{approx for pv} is similar. First, we see that $R_1$ is a harmonic function in $\Omega_\ep$. We also have
\begin{align*}
R_1(x)&=\mu^{-\frac{N}{q+1}} \left[U(\tau)-U\(\frac{x-\xi}{\mu}\)\right] + a_{N,p}\, \mu^{\frac{Np}{q+1}}\whh(x,\xi) \\
&=\mu^{-\frac{N}{q+1}} \left[U(\tau)-U\(\tau-\frac{\ep}{\mu}\frac{x}{|x|}\)\right] + O(1)\mu^{\frac{Np}{q+1}}
= O(1)\mu^{-\frac{N}{q+1}}\(\mu^{(N-2)p-2}+\frac{\ep}{\mu}\)
\end{align*}
for $x \in \pa B(0,\ep)$, and by \eqref{decay for u} and \eqref{simple rel},
\begin{align*}
R_1(x)&=-\mu^{-\frac{N}{q+1}}U (\mu^{-1}(x-\xi))+\frac{a_{N,p}\, \mu^{\frac{Np}{q+1}}}{|x-\xi|^{(N-2)p-2}} - \mu^{-\frac{N}{q+1}}U(\tau)\frac{\ep^{N-2}}{|x|^{N-2}}\\
&=\frac{\mu^{\frac{Np}{q+1}}}{|x-\xi|^{(N-2)p-2}} \left[-U\(\mu^{-1}(x-\xi)\) \left|\mu^{-1}(x-\xi)\right|^{(N-2)p-2}+a_{N,p}\right] + O(1)\mu^{-\frac{N}{q+1}}\ep^{N-2}\\
&= O(1)\mu^{1+\frac{Np}{q+1}} + O(1)\mu^{-\frac{N}{q+1}}\ep^{N-2} = O(1) \mu^{\frac{Np}{q+1}} \left[\mu+\(\frac{\ep}{\mu}\)^{N-2}\mu^{N-(N-2)p}\right] = O(1) \mu^{\frac{Np}{q+1}} \mu
\end{align*}
for $x \in \pa \Omega$. Thus the maximum principle gives the upper bound for $|R_1|$.
%
\end{proof}

\begin{cor}\label{rem C1}
Under the hypotheses of Lemma \ref{first app}, we have
\begin{align*}
|\nabla_{(d,\tau)} R_1(x)|
& = O(1) \left[\mu^{\frac{Np}{q+1}} \mu^{\kappa_0} + \ep^{N-2}\mu^{-\frac{N}{q+1}}|x|^{2-N}\(\mu^{(N-2)p-2} + \frac{\ep}{\mu}\)\right], \\
|\nabla_{(d,\tau)} R_2(x)|
& = O(1) \left[\mu^{\frac{N}{q+1}} \mu^{\kappa_1} + \ep^{N-2}\mu^{-\frac{N}{p+1}} |x|^{2-N} \(\mu^{N-2} + \frac{\ep}{\mu}\)\right],
\end{align*}
for all $x \in \Omega_\ep$, where $C > 0$ in \eqref{O(1)} is chosen uniformly for $(d,\tau) \in \Lambda_\delta$, and $\kappa_0, \kappa_1 \in (0,1)$ are the numbers in \eqref{limit decay}.
\end{cor}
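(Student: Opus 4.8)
The plan is to differentiate the identities \eqref{approx for pu}--\eqref{approx for pv} with respect to $(d,\tau)$, observe that the resulting functions are harmonic in $\Omega_\ep$, estimate their traces on $\pa\Omega$ and on $\pa B(0,\ep)$, and conclude by the maximum principle — exactly as in the proof of Lemma \ref{first app}.

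First I would note that $(d,\tau)\mapsto PU_\mx$ and $(d,\tau)\mapsto PV_\mx$ are of class $C^1$ on $\Lambda_\delta$: since $U$, $V$ are smooth and positive and $(\mu,\xi)$ is polynomial in $(d,\tau)$, the map $(d,\tau)\mapsto(V_\mx^p,U_\mx^q)$ is $C^1$ into $L^{\frac{p+1}{p}}(\Omega_\ep)\times L^{\frac{q+1}{q}}(\Omega_\ep)$, and the solution operator of \eqref{PUPV eq} is bounded and linear. Hence $\pa_{(d,\tau)}PU_\mx$ solves $-\Delta\pa_{(d,\tau)}PU_\mx=pV_\mx^{p-1}\pa_{(d,\tau)}V_\mx$ in $\Omega_\ep$ with zero boundary data (and similarly for $PV_\mx$), while $-\Delta\pa_{(d,\tau)}U_\mx=pV_\mx^{p-1}\pa_{(d,\tau)}V_\mx$ in $\R^N$; since $\whh(\cdot,\xi)$, $H(\cdot,\xi)$ and $|\cdot|^{2-N}$ are harmonic in $\Omega_\ep$, differentiating \eqref{approx for pu}--\eqref{approx for pv} shows that $\pa_{(d,\tau)}R_1$ and $\pa_{(d,\tau)}R_2$ are harmonic in $\Omega_\ep$ and continuous on $\ovom_\ep$.

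The heart of the matter is the trace estimate. On $\pa\Omega$ we have $PU_\mx=PV_\mx=0$, and on $\{x\in\pa\Omega\}$ the identities $\whh(x,\xi)=|x-\xi|^{-((N-2)p-2)}$ and $H(x,\xi)=\ga_N|x-\xi|^{2-N}$ hold (recall $\xi\to0$); invoking \eqref{decay for u}--\eqref{decay for v}, the leading parts of $U_\mx(x)=\mu^{-\frac{N}{q+1}}U(\mu^{-1}(x-\xi))$ and $V_\mx$ cancel the $\whh(x,\xi)$- and $H(x,\xi)$-terms of \eqref{approx for pu}--\eqref{approx for pv} exactly, so that $R_1|_{\pa\Omega}=-\mu^{-\frac{N}{q+1}}W_1(\mu^{-1}(x-\xi))+\mu^{-\frac{N}{q+1}}U(\tau)\ep^{N-2}|x|^{2-N}$ with $W_1:=U-a_{N,p}|\cdot|^{-((N-2)p-2)}$, and likewise for $R_2|_{\pa\Omega}$ with $W_2:=V-b_{N,p}|\cdot|^{2-N}$. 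Differentiating, the $d$-derivative hits the dilation $\mu^{-1}(x-\xi)$ with size $\sim\mu^{-1}$, and since $|\nabla W_1(z)|=O(|z|^{-((N-2)p-1+\kappa_0)})$, $|\nabla W_2(z)|=O(|z|^{-(N-1+\kappa_1)})$ by \eqref{decay for grad u}--\eqref{decay for grad v}, evaluation at $|z|\sim\mu^{-1}$ gives, on $\pa\Omega$,
\[
|\nabla_{(d,\tau)}R_1|=O\(\mu^{\frac{Np}{q+1}}\mu^{\kappa_0}+\mu^{-\frac{N}{q+1}}\ep^{N-2}\),\qquad
|\nabla_{(d,\tau)}R_2|=O\(\mu^{\frac{N}{q+1}}\mu^{\kappa_1}+\mu^{-\frac{N}{p+1}}\ep^{N-2}\);
\]
this is precisely where the imperfect exponents $\kappa_0,\kappa_1$ of \eqref{decay for grad u}--\eqref{decay for grad v} enter, producing $\mu^{\kappa_i}$ rather than $\mu$. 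On $\pa B(0,\ep)$, again $PU_\mx=PV_\mx=0$, and there the mirror terms $\mu^{-\frac{N}{q+1}}U(\tau)\ep^{N-2}|x|^{2-N}$, $\mu^{-\frac{N}{p+1}}V(\tau)\ep^{N-2}|x|^{2-N}$ reduce to $\mu^{-\frac{N}{q+1}}U(\tau)$, $\mu^{-\frac{N}{p+1}}V(\tau)$ and cancel the leading parts of $U_\mx(x)=\mu^{-\frac{N}{q+1}}U(\tau-\tfrac{\ep}{\mu}\tfrac{x}{|x|})$, $V_\mx$; performing this cancellation \emph{before} differentiating is essential, for then the surviving differences $U(\tau)-U(\tau-\tfrac{\ep}{\mu}\tfrac{x}{|x|})$ and $V(\tau)-V(\tau-\tfrac{\ep}{\mu}\tfrac{x}{|x|})$ are $O(\tfrac{\ep}{\mu})$ and remain so under $\pa_{(d,\tau)}$, yielding $|\nabla_{(d,\tau)}R_1|=O(\mu^{-\frac{N}{q+1}}\tfrac{\ep}{\mu}+\mu^{\frac{Np}{q+1}})$ and $|\nabla_{(d,\tau)}R_2|=O(\mu^{-\frac{N}{p+1}}\tfrac{\ep}{\mu}+\mu^{\frac{N}{q+1}})$ on $\pa B(0,\ep)$.

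Finally, the functions
\[
\begin{aligned}
\Phi_1(x)&:=\mu^{\frac{Np}{q+1}}\mu^{\kappa_0}+\mu^{-\frac{N}{q+1}}\ep^{N-2}|x|^{2-N}\(\mu^{(N-2)p-2}+\tfrac{\ep}{\mu}\),\\
\Phi_2(x)&:=\mu^{\frac{N}{q+1}}\mu^{\kappa_1}+\mu^{-\frac{N}{p+1}}\ep^{N-2}|x|^{2-N}\(\mu^{N-2}+\tfrac{\ep}{\mu}\)
\end{aligned}
\]
are harmonic and positive in $\Omega_\ep$; using $(\ep/\mu_\ep)^{N-2}=\mu_\ep^{(N-2)p-2}$ (see \eqref{simple rel}; hence $\mu^{(N-2)p-2}\le C\tfrac{\ep}{\mu}$ on $\Lambda_\delta$ when $N\ge4$) together with $\kappa_0\le N-2$ and $\kappa_1\le(N-2)p-2$ (the former automatic from $\kappa_0<1$, the latter obtained by harmlessly shrinking $\kappa_1$, which is allowed since \eqref{decay for grad v} holds a fortiori for smaller exponents), one checks that $C\Phi_i$ dominates $\pm\pa_dR_i$ and $\pm\pa_{\tau_j}R_i$ on $\pa\Omega_\ep$ for a suitable $C>0$ uniform in $(d,\tau)\in\Lambda_\delta$ — the $|x|^{2-N}$-term of $\Phi_i$ absorbing the $\pa B(0,\ep)$-bound and its constant the $\pa\Omega$-bound. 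The maximum principle then gives $|\nabla_{(d,\tau)}R_i(x)|\le C\Phi_i(x)$ in $\Omega_\ep$, which is the claim. I expect the main obstacle to be the $\pa\Omega$-trace analysis: one must simultaneously use the exact cancellation of the leading asymptotics of $U_\mx$, $V_\mx$ (so as not to destroy the $|x|^{2-N}$-structure of the error, which would spoil the bound near the hole) and control the differentiated remainder using only the one-power-weaker gradient decay \eqref{decay for grad u}--\eqref{decay for grad v}.
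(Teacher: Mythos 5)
Your proposal is a correct and detailed execution of exactly the argument the paper indicates (its proof of the corollary is the single line ``Using \eqref{decay for grad u} and \eqref{decay for grad v}, one can argue as in the proof of previous lemma''): differentiate the boundary traces, combine the leading asymptotics of $U_\mx,V_\mx$ with the $\whh$- and $H$-terms on $\pa\Omega$ and with the $\ep^{N-2}|x|^{2-N}$-terms on $\pa B(0,\ep)$ \emph{before} differentiating so that only the remainder $W_1,W_2$ and the increments $U(\tau)-U(\tau-\tfrac{\ep}{\mu}\tfrac{x}{|x|})$, $V(\tau)-V(\tau-\tfrac{\ep}{\mu}\tfrac{x}{|x|})$ survive, and then close by the maximum principle against the harmonic comparison functions $\Phi_1,\Phi_2$. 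You also correctly identify the only subtle bookkeeping point, namely the need to take $\kappa_1\le(N-2)p-2$ (harmless since \eqref{decay for grad v} is stable under decreasing $\kappa_1$), and your dominance checks on both boundary components, using $(\ep/\mu)^{N-2}\asymp\mu^{(N-2)p-2}$ and $\frac{N}{p+1}+\frac{N}{q+1}=N-2$, are right.
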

\begin{proof}
Using \eqref{decay for grad u} and \eqref{decay for grad v}, one can argue as in the proof of previous lemma.
\end{proof}

\subsection{The second approximation of the solution}
It turns out that the $u$-component $PU_\mx$ of the first approximation $(PU_\mx, PV_\mx)$ makes a huge error and we must refine it to build an actual solution to \eqref{main system}.
Our idea is, motivated by Subsection 2.3 in \cite{KP}, to employ the function $\mcp U_\mx$ solving
\begin{equation}\label{mcp U eq}
\begin{cases}
-\Delta \mcp U_\mx = (PV_\mx)^p &\textup{in } \Omega_\ep,\\
\mcp U_\mx =0 &\textup{on } \pa \Omega_\ep
\end{cases}
\end{equation}
as the second (i.e., refined) approximation for the $u$-component of the solution.
\begin{proposition}\label{second approx}
Assume $N \ge 4$, $p \in (1,\frac{N}{N-2})$, and $\ep > 0$ small. Given an auxiliary number $\kappa \in (0,1)$ small enough (determined by $N$ and $p$)\,\footnote{Working with $\kappa = 0$
gives the remainder term in the energy expansion in \eqref{ene exp} and \eqref{ene exp C1} too large.} and the parameters $(d,\tau) \in \Lambda_\delta$, we set
\begin{equation}\label{A mu}
\mca_{\ep,(d,\tau)}(x) = \int_{B(0,\mu^{\kappa-1})\setminus B(-\tau,\frac{\ep}{\mu})} \frac{G_\ep(x,\mu y+\xi)V^{p-1}(y)}{|y+\tau|^{N-2}}\, \dy \quad \textup{for } x \in \Omega_\ep
\end{equation}
where $(\mu,\xi) \in (0,\infty) \times \R^N$ is defined in \eqref{trans and scaling}. Then it holds that
\begin{equation}\label{mcp U}
\begin{aligned}
\mcp U_\mx(x) = U_\mx(x)
&- \(\frac{b_{N,p}}{\ga_N}\)^p\mu^{\frac{Np}{q+1}}\wth_\xi(x) \\
&- \ep^{N-2}\mu^{-\frac{N}{q+1}} \left[U(\tau) {|x|^{2-N}} +p V(\tau) \mca_{\ep,(d,\tau)}(x)\right] + R(x) \quad \textup{for } x \in \Omega_\ep.
\end{aligned}
\end{equation}
Here, $\wth_\xi$ is the function in \eqref{wth} with $y = \xi$ and
\begin{equation}\label{R}
R(x) := O(1) \ep^{(N-2)p}\mu^{-\frac{Np}{p+1}}|x|^{2-(N-2)p} + o(1)\left[\mu^{\frac{N p}{q+1}} + \ep^{N-2}\mu^{-\frac{N}{q+1}} \({|x|^{2-N}}+\mca_{\ep,(d,\tau)}(x)\)\right]
\end{equation}
where $C,\, C_\ep > 0$ in \eqref{O(1)} and \eqref{o(1)} are chosen uniformly for $(d,\tau)\in \Lambda_\delta$. Furthermore,
\begin{equation}\label{mca est}
\mca_{\ep,(d,\tau)}(x) = O(1)\mu^{(N-2)p-N}|x|^{2-(N-2)p} \quad \textup{for } x \in \Omega_\ep
\end{equation}
where $C > 0$ in \eqref{O(1)} is chosen uniformly for $(d,\tau)\in \Lambda_\delta$.
\end{proposition}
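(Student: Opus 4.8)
\emph{Proof proposal.}
The plan is to start from the Green representation
\[\mcp U_\mx(x)=\int_{\Omega_\ep}G_\ep(x,z)\,(PV_\mx(z))^p\,\textup{d}z=PU_\mx(x)+\int_{\Omega_\ep}G_\ep(x,z)\big[(PV_\mx(z))^p-V_\mx(z)^p\big]\,\textup{d}z,\]
expanding the first summand by \eqref{approx for pu} of Lemma \ref{first app}, which already supplies the terms $U_\mx$, $-a_{N,p}\mu^{\frac{Np}{q+1}}\whh(\cdot,\xi)$, $-\mu^{-\frac{N}{q+1}}U(\tau)\ep^{N-2}|\cdot|^{2-N}$ and a remainder $R_1$ of order $R$. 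Writing $PV_\mx=V_\mx-\bar V-\hat V+R_2$ with $\bar V:=\tfrac{b_{N,p}}{\ga_N}\mu^{\frac{N}{q+1}}H(\cdot,\xi)$ and $\hat V:=\mu^{-\frac{N}{p+1}}V(\tau)\,\ep^{N-2}|\cdot|^{2-N}$, the core of the argument is that $(PV_\mx)^p-V_\mx^p$ carries a genuinely two-scale structure. Since $0\le PV_\mx\le V_\mx$ by the maximum principle, and since the slow decay \eqref{decay for v} of $V$ gives $V_\mx=\tfrac{b_{N,p}}{\ga_N}\mu^{\frac{N}{q+1}}\tfrac{\ga_N}{|\cdot-\xi|^{N-2}}+O(\mu^{\frac{N}{q+1}+1}|\cdot-\xi|^{1-N})$, one has, on $\{|z-\xi|\gtrsim\mu\}$,
\[(V_\mx-\bar V)^p-V_\mx^p=\Big(\tfrac{b_{N,p}}{\ga_N}\Big)^{p}\mu^{\frac{Np}{q+1}}\Big[G(\cdot,\xi)^p-\tfrac{\ga_N^p}{|\cdot-\xi|^{(N-2)p}}\Big]+(\textup{decay correction});\]
because here the two sides agree at \emph{all} orders in $\bar V$ (resp. $H$), the potentially fatal $O(\mu^{\frac{Np}{q+1}})$ pieces of the Taylor remainder never appear on their own. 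Peeling off one more order in $\hat V$ and $R_2$, I would therefore write
\[(PV_\mx(z))^p-V_\mx(z)^p=\Big(\tfrac{b_{N,p}}{\ga_N}\Big)^{p}\mu^{\frac{Np}{q+1}}\Big[G(z,\xi)^p-\tfrac{\ga_N^p}{|z-\xi|^{(N-2)p}}\Big]-p\,V_\mx(z)^{p-1}\hat V(z)+\mce(z),\]
with $\mce$ concentrated in the bubble core $\{|z-\xi|\lesssim\mu\}$ and within a bounded multiple of $\ep$ of the hole $\pa B(0,\ep)$.

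For the main ($G^p$) term I would replace $G_\ep$ by $G=G_\Omega$ and drop the negligible integral over $B(0,\ep)$ via Lemma \ref{reg est for puntured} and \eqref{H ep1}, and then use the defining equations: $\int_\Omega G(\cdot,z)G(z,\xi)^p\,\textup{d}z=\wtg_\xi$, while by \eqref{tilde gamma} the function $\tilde\ga_{N,p}|x-\xi|^{2-(N-2)p}$ solves $-\Delta(\cdot)=\ga_N^p|\cdot-\xi|^{-(N-2)p}$ and equals $\tilde\ga_{N,p}\whh(x,\xi)$ on $\pa\Omega$, so $\int_\Omega G(\cdot,z)\ga_N^p|z-\xi|^{-(N-2)p}\,\textup{d}z=\tilde\ga_{N,p}|\cdot-\xi|^{2-(N-2)p}-\tilde\ga_{N,p}\whh(\cdot,\xi)$. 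Subtracting and using $\wth_\xi=\tilde\ga_{N,p}|\cdot-\xi|^{2-(N-2)p}-\wtg_\xi$ gives $\int_\Omega G(\cdot,z)\big[G(z,\xi)^p-\ga_N^p|z-\xi|^{-(N-2)p}\big]\textup{d}z=\tilde\ga_{N,p}\whh(\cdot,\xi)-\wth_\xi$; multiplying by $(b_{N,p}/\ga_N)^p\mu^{\frac{Np}{q+1}}$ and invoking the identity $a_{N,p}=(b_{N,p}/\ga_N)^p\tilde\ga_{N,p}$ --- which is exactly \eqref{bnp anp} --- the $\whh$-part cancels the one carried over from $PU_\mx$, leaving precisely $-(b_{N,p}/\ga_N)^p\mu^{\frac{Np}{q+1}}\wth_\xi$. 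For the $\hat V$-term I would substitute $z=\mu y+\xi$: then $|z|=\mu|y+\tau|$, $V_\mx(z)^{p-1}=\mu^{-\frac{N(p-1)}{p+1}}V(y)^{p-1}$, and by \eqref{simple rel} the exponents collapse so that $-p\int_{\Omega_\ep}G_\ep(\cdot,z)V_\mx(z)^{p-1}\hat V(z)\,\textup{d}z$ equals $-p\,\ep^{N-2}\mu^{-\frac{N}{q+1}}V(\tau)$ times the integral in \eqref{A mu} taken over $((\Omega-\xi)/\mu)\setminus B(-\tau,\tfrac{\ep}{\mu})$; truncating the $y$-domain down to $B(0,\mu^{\kappa-1})$ reproduces $-p\,\ep^{N-2}\mu^{-\frac{N}{q+1}}V(\tau)\,\mca_{\ep,(d,\tau)}$, and on the discarded set the bound $V(y)^{p-1}\lesssim|y|^{-(N-2)(p-1)}\le\mu^{(1-\kappa)(N-2)(p-1)}$ makes the truncation error $o(1)$ relative to the retained scales --- the sole role of $\kappa$.

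It then remains to show that everything else is of order $R$: $\int G_\ep\mce$, the $R_2$-contribution, the $G_\ep\!\to\!G$ and $\Omega_\ep\!\to\!\Omega$ errors, and the $\mca$-truncation tail. In the core $\{|z-\xi|\lesssim\mu\}$ one has $(PV_\mx)^p\approx V_\mx^p$ and $G(z,\xi)^p\approx\ga_N^p|z-\xi|^{-(N-2)p}$, so $\mce$ collapses to a higher $\mu$-power there; near $\pa B(0,\ep)$ the Taylor expansion degenerates ($\hat V\approx V_\mx$ and $PV_\mx$ vanishes), and one would estimate directly $0\le(PV_\mx)^p\lesssim\hat V^p$ together with the classical potential identity $\int_{\Omega_\ep}G_\ep(x,z)\hat V(z)^p\,\textup{d}z\approx C\,\ep^{(N-2)p}\mu^{-\frac{Np}{p+1}}|x|^{2-(N-2)p}$ (from $-\Delta|x|^{2-(N-2)p}\propto|x|^{-(N-2)p}$), which is the leading error displayed in \eqref{R}; on the bulk the elementary bound $|(a+b)^p-a^p-p\,a^{p-1}b|\le C\,a^{p-2}b^2$ for $p\in(1,2)$ and $0\le a+b\le a$ (recall $p<\tfrac{N}{N-2}\le2$) controls the Taylor remainder with $a=V_\mx$, $b=PV_\mx-V_\mx$, and here it is crucial that $G_\ep$ vanishes on $\pa B(0,\ep)$, killing the $|z|^{2-N}$ singularity of $\hat V$ exactly where $\hat V$ is largest. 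Finally, \eqref{mca est} follows from the same change of variables, the bound $G_\ep\le G_{\ep,1}\le\ga_N|x-z|^{2-N}$, and $V(y)^{p-1}\lesssim(1+|y|)^{-(N-2)(p-1)}$: splitting the resulting integral according to $|y|\lesssim1$ or $|y|\gg1$ yields $O(\mu^{(N-2)p-N}|x|^{2-(N-2)p})$. The hard part will be this potential analysis: because $U$ decays only like $|x|^{2-(N-2)p}$ with $(N-2)p<N$ (Lemma \ref{limit decay}), the tail $V_\mx^p\sim|z-\xi|^{-(N-2)p}$ is non-integrable at infinity at a slow rate --- this is what forces the use of $\whh$ and $\wth_\xi$ in place of ordinary Robin functions, keeps the ``core / far field'' split genuinely two-sided, and demands that every $G_\ep\leftrightarrow G$ and $\Omega_\ep\leftrightarrow\Omega$ replacement be pushed to an accuracy beating each of the competing scales $\mu^{\frac{Np}{q+1}}$, $\ep^{N-2}\mu^{-\frac{N}{q+1}}$, $\ep^{(N-2)p}\mu^{-\frac{Np}{p+1}}$, uniformly for $(d,\tau)\in\Lambda_\delta$ and consistently with \eqref{simple rel} and \eqref{mu ep}.
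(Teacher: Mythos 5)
Your proposal is correct and follows essentially the same route as the paper: both start from the Green representation $\mcp U_\mx-PU_\mx=\int_{\Omega_\ep}G_\ep\,[(PV_\mx)^p-V_\mx^p]$, both isolate a far-field $G^p$-type correction that converts the $\whh$-Robin term of Lemma~\ref{first app} into $\wth_\xi$ via \eqref{bnp anp}, both peel off a linear $-pV_\mx^{p-1}\hat V$ term producing $\mca_{\ep,(d,\tau)}$ after the change of variables $z=\mu y+\xi$, and both control the remaining pieces by the Taylor inequalities \eqref{ele ineq} together with potential bounds of the form $\int|x-y|^{2-N}|y|^{-a}\,\dy$.

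The only genuine differences are organizational. The paper splits the integration domain at $\pa B(\xi,\mu^\kappa)$ first, applies a different member of \eqref{ele ineq} inside and outside that ball, and funnels the far-field contribution through the auxiliary solution $\varphi_\xi$ of \eqref{varphi} before combining $\wth_\xi=\tilde\ga_{N,p}\whh(\cdot,\xi)-\varphi_\xi$ at the very end. You instead decompose the \emph{integrand} pointwise into a $G^p-\ga_N^p|\cdot-\xi|^{-(N-2)p}$ main part, a $-pV_\mx^{p-1}\hat V$ part, and an error $\mce$ supported mostly in the core and near the hole, and you evaluate the $G^p$ piece directly from the defining PDEs of $\wtg_\xi$ and $\whh$ — your identity $\int_\Omega G(\cdot,z)\bigl[G(z,\xi)^p-\ga_N^p|z-\xi|^{-(N-2)p}\bigr]\dy=\tilde\ga_{N,p}\whh(\cdot,\xi)-\wth_\xi$ is exactly $\varphi_\xi$ written out, and makes the cancellation with the $a_{N,p}\whh$ term from $PU_\mx$ transparent. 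These are presentational choices, not a different argument; your sketch of the error control (core quadratic Taylor remainder, the $\hat V^p$-type bound near the hole giving $O(\ep^{(N-2)p}\mu^{-Np/(p+1)}|x|^{2-(N-2)p})$, the $G_\ep\leftrightarrow G$ and $\Omega_\ep\leftrightarrow\Omega$ replacements via Lemma~\ref{reg est for puntured}) is the same bookkeeping the paper does in \eqref{inner est}--\eqref{g ep to varp} and Lemma~\ref{high est}, just condensed. One small caveat: the role of $\kappa$ is a bit broader than the truncation of $\mca_{\ep,(d,\tau)}$ that you single out — it also controls the size of the core-remainder terms (see the $\mu^{\kappa(N-(N-2)p)}$ and $\mu^{1+\kappa(1-N)}$ factors in the paper) — but that does not affect the validity of the argument.
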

\noindent In contrast to the previous subsection, it is difficult to estimate $\mcp U_\mx$ by exploiting the maximum principle. Instead, we conduct the potential analysis. More precisely, we estimate the right-hand side of the formula
\begin{equation}\label{gr rep}
\begin{aligned}
\(\mcp U_\mx-PU_\mx\)(x) &=\int_{\Omega_\ep \cap B(\xi,\mu^\kappa)} G_\ep(x,y)\(PV_\mx^p-V_\mx^p\)(y)\dy \\
&\ +\int_{\Omega_\ep\setminus B(\xi,\mu^\kappa)} G_\ep(x,y) \(PV_\mx^p-V_\mx^p\)(y)\dy,
\end{aligned}
\end{equation}
which holds for all $x \in \Omega_\ep$ owing to Green's representation formula.

In the setting of \cite{KP}, the first integral in the right-hand side of \eqref{gr rep} is ignorable,
and the leading-order term of the second integral is a multiple of $\wth_\xi$.
In our case, the leading-order term of the first integral is a multiple of $\mca_{\ep,(d,\tau)}$ (so the first integral \textbf{does} contribute to the expansion of $\mcp U_\mx-PU_\mx$), while that of the second integral is a linear combination of $\wth_\xi(x)$ and $|x|^{2-N}$.

On the other hand, by analyzing \eqref{A mu}, one sees that $\mca_{\ep,(d,\tau)}(x) \ll |x|^{2-N}$ for $\ep < |x| \ll \mu$ (refer to \eqref{mca est}) and $\mca_{\ep,(d,\tau)}(x) \gg |x|^{2-N}$ for $x \in \Omega_\ep$ away from 0, so $\mca_{\ep,(d,\tau)}(x)$ and $|x|^{2-N}$ are uncomparable in the pointwise sense.
Nonetheless, they yield the same-order terms in the expansion \eqref{ene exp} and \eqref{ene exp C1} of the reduced energy $J_\ep$ in \eqref{red ene}.
Remarkably, the seemingly `ugly' function $\mca_{\ep,(d,\tau)}$ produces a very neat term \eqref{second term est4} during the process of energy expansion.

\begin{proof}[Proof of Proposition \ref{second approx}]
Let us estimate the first integral on the right-hand side of \eqref{gr rep}. Using \eqref{approx for pv}, \eqref{A mu} and the first inequality in \eqref{ele ineq}, we evaluate
\begin{equation}\label{inner est}
\begin{aligned}
&\ \int_{\Omega_\ep \cap B(\xi,\mu^\kappa)} G_\ep(x,y)\(PV_\mx^p-V_\mx^p\)(y)\dy \\
&= \int_{\Omega_\ep \cap B(\xi,\mu^\kappa)} G_\ep(x,y) \left[p V_\mx^{p-1}(y) \left\{-\frac{b_{N,p}}{\ga_N}\mu^{\frac{N}{q+1}}H(y,\xi) - \mu^{-\frac{N}{p+1}}V(\tau)\frac{\ep^{N-2}}{|y|^{N-2}} + R_2(y)\right\} \right. \\
&\hspace{100pt} \left. + O(1)\(\mu^{\frac{Np}{q+1}} + \mu^{-\frac{pN}{p+1}} \frac{\ep^{(N-2)p}}{|y|^{(N-2)p}}\)\right] \dy\\
&=-\left(1+o(1)\right) \ep^{N-2}\mu^{-\frac{N}{p+1}} pV(\tau) \int_{\Omega_\ep \cap B(\xi,\mu^\kappa)}\frac{G_\ep(x,y) V_\mx^{p-1}(y)}{|y|^{N-2}}\, \dy\\
&\ + O(1) \int_{\Omega_\ep \cap B(\xi,\mu^\kappa)}\frac{1}{|x-y|^{N-2}} \(V_\mx^{p-1}(y) \mu^{\frac{N}{q+1}} + \mu^{\frac{Np}{q+1}} + \mu^{-\frac{pN}{p+1}} \frac{\ep^{(N-2)p}}{|y|^{(N-2)p}}\)\dy\\
&=-(1+o(1)) \ep^{N-2}\mu^{-\frac{N}{q+1}} pV(\tau) \mca_{\ep,(d,\tau)}(x) + O(1) \ep^{(N-2)p}\mu^{-\frac{pN}{p+1}} |x|^{2-(N-2)p} + o(1)\mu^{\frac{Np}{q+1}}
\end{aligned}
\end{equation}
for $x \in \Omega_\ep$. Here, the third equality is valid because of \eqref{interior es},
\begin{align*}
\mu^{-\frac{N}{p+1}} \int_{\Omega_\ep \cap B(\xi,\mu^\kappa)}\frac{G_\ep(x,y)V_\mx^{p-1}(y)}{|y|^{N-2}}\, \dy
&= \mu^{-\frac{N}{q+1}} \int_{B(0,\mu^{\kappa-1})\setminus B(-\tau,\frac{\ep}{\mu})}\frac{G_\ep(x,\mu y+\xi)V^{p-1}(y)}{ |y+\tau|^{N-2}}\, \dy \\
&= \mu^{-\frac{N}{q+1}} \mca_{\ep,(d,\tau)}(x)
\end{align*}
and
\[\int_{\Omega_\ep \cap B(\xi,\mu^\kappa)}\frac{\dy}{|x-y|^{N-2}|y|^{(N-2)p}} \le \int_{\R^N} \frac{\dy}{|x-y|^{N-2}|y|^{(N-2)p}} = O(1)|x|^{2-(N-2)p}.\]

\medskip
We next estimate the second integral on the right-hand side of \eqref{gr rep}. For $y \in \Omega_\ep\setminus B(\xi,\mu^\kappa)$, we have
\begin{equation}\label{v exp out}
\begin{aligned}
V_\mx(y) &= \mu^{-\frac{N}{p+1}}V(\mu^{-1}(y-\xi))\\
&= \mu^{-\frac{N}{p+1}} \left[b_{N,p}|\mu^{-1}(y-\xi)|^{2-N}+O(1)|\mu^{-1}(y-\xi)|^{1-N}\right] &(\textup{by } \eqref{decay for v})\\
&= \mu^{\frac{N}{q+1}} \left[b_{N,p}|y-\xi|^{2-N}+O(1)\mu|y-\xi|^{1-N}\right] &(\textup{by } \eqref{critical}),
\end{aligned}
\end{equation}
and so
\begin{equation}\label{pv outer}
\begin{aligned}
PV_\mx(y) &= V_\mx(y) - \frac{b_{N,p}}{\ga_N}\mu^{\frac{N}{q+1}}H(y,\xi) - \mu^{-\frac{N}{p+1}}V(\tau) \frac{\ep^{N-2}}{|y|^{N-2}} + R_2(y) &(\textup{by } \eqref{approx for pv})\\
&=\frac{b_{N,p}}{\ga_N}\mu^{\frac{N}{q+1}} \left[\ga_N|y-\xi|^{2-N}-H(y,\xi)\right] \\
&\ -\mu^{-\frac{N}{p+1}}V(\tau)\frac{\ep^{N-2}}{|y|^{N-2}} + O(1)\mu^{\frac{N}{q+1}+1}|y-\xi|^{1-N} + R_2(y) &(\textup{by }\eqref{v exp out})\\
&=\frac{b_{N,p}}{\ga_N}\mu^{\frac{N}{q+1}}G(y,\xi) - \mu^{-\frac{N}{p+1}}V(\tau)\frac{\ep^{N-2}}{|y|^{N-2}} + \wtr_2(y)
\end{aligned}
\end{equation}
where
\begin{equation}\label{wtr 2}
\begin{aligned}
\wtr_2(y) &:= O(1)\mu^{\frac{N}{q+1}+1}|y-\xi|^{1-N}+R_2(y)\\
&=O(1)\left[\mu^{\frac{N}{q+1}} \(\mu^{1+\kappa(1-N)}+\(\frac{\ep}{\mu}\)^{N-2}\) + \ep^{N-2}\mu^{-\frac{N}{p+1}} |y|^{2-N} \(\mu^{N-2}+\frac{\ep}{\mu}\)\right].
\end{aligned}
\end{equation}
Thus, for any $x \in \Omega_\ep$,
\begin{equation}\label{u rep}
\begin{aligned}
&\ \int_{\Omega_\ep\setminus B(\xi,\mu^\kappa)} G_\ep(x,y) V_\mx^p(y)\dy \\
&= b_{N,p}^p\mu^{\frac{Np}{q+1}} \int_{\Omega_\ep\setminus B(\xi,\mu^\kappa)} G_\ep(x,y) |y-\xi|^{(2-N)p} \(1+O(1)\mu^{1-\kappa}\)^p\dy &(\textup{by } \eqref{v exp out}) \\
&= b_{N,p}^p\mu^{\frac{Np}{q+1}} \int_{\Omega_\ep\setminus B(\xi,\mu^\kappa)} G_\ep(x,y) |y-\xi|^{(2-N)p}\, \dy \\
&\ + O(1)\mu^{\frac{Np}{q+1}}\mu^{1-\kappa}\int_{\Omega \setminus B(0,\frac{1}{2}\mu^\kappa)}|x-y|^{2-N} |y|^{(2-N)p}\, \dy \\
&= b_{N,p}^p\mu^{\frac{Np}{q+1}} \int_{\Omega_\ep\setminus B(\xi,\mu^\kappa)} G_\ep(x,y) |y-\xi|^{(2-N)p}\, \dy + O(1)\mu^{\frac{Np}{q+1}}\mu^{(1-\kappa)+ \kappa(2-(N-2)p)} &(\textup{by } \eqref{basic est}),
\end{aligned}
\end{equation}
and by \eqref{pv outer} and the second inequality in \eqref{ele ineq},
\begin{equation}\label{u rep 1}
\begin{aligned}
&\ \int_{\Omega_\ep\setminus B(\xi,\mu^\kappa)} G_\ep(x,y) PV_\mx^p(y)\dy \\
&= \int_{\Omega_\ep\setminus B(\xi,\mu^\kappa)} G_\ep(x,y) \(\frac{b_{N,p}}{\ga_N}\mu^{\frac{N}{q+1}}G(y,\xi)\)^p \dy \\
&\ -\int_{\Omega_\ep\setminus B(\xi,\mu^\kappa)} G_\ep(x,y) \left|\mu^{-\frac{N}{p+1}}V(\tau) \frac{\ep^{N-2}}{|y|^{N-2}}-\wtr_2(y)\right|^{p-1} \(\mu^{-\frac{N}{p+1}}V(\tau) \frac{\ep^{N-2}}{|y|^{N-2}}-\wtr_2(y)\) \dy \\
&\ + O(1)\int_{\Omega_\ep\setminus B(\xi,\mu^\kappa)} G_\ep(x,y) \left[\mu^{\frac{N}{q+1}}G(y,\xi) \right]^{p-1} \left|\mu^{-\frac{N}{p+1}} \frac{\ep^{N-2}}{|y|^{N-2}} -\wtr_2(y) \right|\dy \\
&= \(\frac{b_{N,p}}{\ga_N}\)^p\mu^{\frac{Np}{q+1}}\int_{\Omega_\ep\setminus B(\xi,\mu^\kappa)} G_\ep(x,y)G^p(y,\xi)\dy
- \ep^{(N-2)p}\mu^{-\frac{Np}{p+1}}V^p(\tau) \int_{\Omega_\ep\setminus B(\xi,\mu^\kappa)} \frac{G_\ep(x,y)}{|y|^{(N-2)p}}\, \dy\\
&\ + O(1)R_3(x)
\end{aligned}
\end{equation}
where
\begin{equation}\label{R3}
\begin{aligned}
R_3(x) := \int_{\Omega_\ep\setminus B(\xi,\mu^\kappa)} G_\ep(x,y) &\left[\left[\mu^{\frac{N}{q+1}}G(y,\xi) \right]^{p-1}\(\mu^{-\frac{N}{p+1}}\frac{\ep^{N-2}}{|y|^{N-2}} +\big|\wtr_2(y)\big|\) \right] \\
&\left. + \(\mu^{-\frac{N}{p+1}} \frac{\ep^{N-2}}{|y|^{N-2}}\)^{p-1} \big|\wtr_2(y)\big|+\big|\wtr_2(y)\big|^p \right]\dy.
\end{aligned}
\end{equation}
Owing to Lemma \ref{high est} and \eqref{basic est whole}, it holds that
\begin{equation}\label{R3 est}
\begin{cases}
\displaystyle R_3(x)= o(1)\mu^{\frac{Np}{q+1}},\\
\displaystyle \int_{\Omega_\ep\setminus B(\xi,\mu^\kappa)} \frac{G_\ep(x,y)}{|y|^{(N-2)p}}\, \dy = O(1) \int_{\R^N} \frac{\dy}{|x-y|^{N-2}|y|^{(N-2)p}} = O(1)|x|^{2-(N-2)p}
\end{cases}
\end{equation}
for $\kappa \in (0,1)$ small. Hence, we see from \eqref{u rep}, \eqref{u rep 1} and \eqref{R3 est} that
$$
\int_{\Omega_\ep\setminus B(\xi,\mu^\kappa)} G_\ep(x,y) V_\mx^p(y)\dy = b_{N,p}^p\mu^{\frac{Np}{q+1}} \int_{\Omega_\ep\setminus B(\xi,\mu^\kappa)} G_\ep(x,y) |y-\xi|^{(2-N)p}\, \dy +o(1)\mu^{\frac{Np}{q+1}}
$$
and
\begin{align*}
\int_{\Omega_\ep\setminus B(\xi,\mu^\kappa)} G_\ep(x,y) PV_\mx^p(y)\dy
&=\(\frac{b_{N,p}}{\ga_N}\)^p\mu^{\frac{Np}{q+1}}\int_{\Omega_\ep\setminus B(\xi,\mu^\kappa)} G_\ep (x,y)G^p(y,\xi)\dy\\
&\ + O(1) \ep^{(N-2)p}\mu^{-\frac{Np}{p+1}}|x|^{2-(N-2)p} + o(1)\mu^{\frac{Np}{q+1}}
\end{align*}
for $\kappa \in (0,1)$ small. Combining these yields
\begin{equation}\label{ot est}
\begin{aligned}
&\ \int_{\Omega\setminus B(\xi,\mu^\kappa)} G_\ep(x,y)\(PV_\mx^p-V_\mx^p\)(y)\dy\\
&= \(\frac{b_{N,p}}{\ga_N}\)^p\mu_\ep^{\frac{Np}{q+1}}d^{\frac{Np}{q+1}} \int_{\Omega\setminus B(\xi,\mu^\kappa)} G_\ep(x,y)\(G^p(y,\xi)-\ga_N^p|y-\xi|^{(2-N)p}\)\dy\\
&\ +O(1) \ep^{(N-2)p}\mu^{-\frac{Np}{p+1}}|x|^{2-(N-2)p} + o(1)\mu^{\frac{Np}{q+1}}.
\end{aligned}
\end{equation}
We write
\begin{equation}\label{g ep to g}
\begin{aligned}
&\ \int_{\Omega\setminus B(\xi,\mu^\kappa)} G_\ep(x,y)\(G^p(y,\xi)-\ga_N^p|y-\xi|^{(2-N)p}\)\dy \\
&= \int_{\Omega} G(x,y) \(G^p(y,\xi)-\ga_N^p|y-\xi|^{(2-N)p}\)\dy - \int_{B(\xi,\mu^\kappa)} G(x,y) \(G^p(y,\xi)-\ga_N^p|y-\xi|^{(2-N)p}\)\dy \\
&\ - \int_{\Omega\setminus B(\xi,\mu^\kappa)}(H_{\ep,1}(x,y)+\mcr(x,y)) \(G^p(y,\xi)-\ga_N^p|y-\xi|^{(2-N)p}\)\dy
\end{aligned}
\end{equation}
where $\mcr(x,y)=H_\ep(x,y)-H_{\ep,1}(x,y)-H(x,y)$. We observe from the first inequality in \eqref{ele ineq} that
\begin{align*}
&\ \int_{B(\xi,\mu^\kappa)} G(x,y) \(G^p(y,\xi)-\ga_N^p|y-\xi|^{(2-N)p}\) \dy\\
&=\ga_N^p\int_{B(\xi,\mu^\kappa)} G(x,y)|y-\xi|^{(2-N)p}\left[\(1-\ga_N^{-1}|y-\xi|^{N-2}H(y,\xi)\)^p-1\right] \dy\\
&=O(1)\int_{B(\xi,\mu^\kappa)} \frac{\dy}{|x-y|^{N-2}|y-\xi|^{(N-2)(p-1)}} = O(1)\mu^{\kappa(N-(N-2)p)} &( \textup{by } \eqref{interior es})
\end{align*}
and
\begin{equation}\label{ot est2}
\begin{aligned}
&\ \int_{\Omega\setminus B(\xi,\mu^\kappa)} (H_{\ep,1}(x,y)+\mcr(x,y))\(G^p(y,\xi)-\ga_N^p|y-\xi|^{(2-N)p}\)\dy\\
&=O(1)\int_{\Omega\setminus B(\xi,\mu^\kappa)} (H_{\ep,1}(x,y)+\mcr(x,y))(|y-\xi|^{(N-2)(1-p)}+1) \dy\\
&=O(1)\ep^{N-2}|x|^{2-N}\int_{\Omega\setminus B(0,\frac{1}{2}\mu^\kappa)} |y|^{-(N-2)p} \dy = O(1)\ep^{N-2}|x|^{2-N}
\end{aligned}
\end{equation}
where the second equation of \eqref{ot est2} follows from Lemma \ref{reg est for puntured} and
$$
\left||y|\(x-\frac{\ep^2y}{|y|^2}\)\right|^{2-N} = |x|^{2-N}|y|^{2-N}\left|\frac{x}{|x|}-\frac{\ep^2y}{|x||y|^2}\right|^{2-N} = O(1)(1+\ep \mu^{-\kappa})|x|^{2-N}|y|^{2-N}
$$
for $x \in \Omega_\ep$ and $y \in \Omega\setminus B(\xi,\mu^\kappa)$. Therefore, if we set $\varphi_\xi$ as the solution to
\begin{equation}\label{varphi}
\begin{cases}
-\Delta \varphi_\xi = G^p(\cdot,\xi)-\ga_N^p|\cdot-\xi|^{(2-N)p} &\textup{in } \Omega,\\
\varphi_\xi=0 &\textup{on } \pa \Omega,
\end{cases}
\end{equation}
then
\begin{multline}\label{g ep to varp}
\int_{\Omega\setminus B(\xi,\mu^\kappa)} G_\ep(x,y)\(G^p(y,\xi)-\ga_N^p|y-\xi|^{(2-N)p}\)\dy\\
=\varphi_\xi(x)+ O(1)\mu^{\kappa(N-(N-2)p)} +O(1)\ep^{N-2}|x|^{2-N}.
\end{multline}
Plugging this into \eqref{ot est}, we conclude
\begin{multline}\label{outer est}
\int_{\Omega\setminus B(\xi,\mu^\kappa)} G_\ep(x,y)\(PV_\mx^p-V_\mx^p\)(y)\dy =\(\frac{b_{N,p}}{\ga_N}\)^p\mu^{\frac{Np}{q+1}}\varphi_\xi(x) \\
+ O(1) \ep^{(N-2)p}\mu^{-\frac{Np}{p+1}}|x|^{2-(N-2)p} + o(1)\mu^{\frac{Np}{q+1}} + O(1)\ep^{N-2}|x|^{2-N}.
\end{multline}

As a result, we infer from \eqref{gr rep}, \eqref{approx for pu}, \eqref{inner est}, \eqref{A mu}, \eqref{outer est}, \eqref{bnp anp} and the identity
\[\wth_\xi(x) = \tilde{\ga}_{N,p}\whh(x,\xi)-\varphi_\xi(x) \quad \textup{(by \eqref{wth}, \eqref{tilde gamma}, \eqref{whh} and \eqref{varphi})}\]
that
\begin{align*}
\mcp U_\mx(x) &= U_\mx(x)-a_{N,p}\, \mu_\ep^{\frac{Np}{q+1}}d^{\frac{Np}{q+1}}\whh(x,\xi) + \(\frac{b_{N,p}}{\ga_N}\)^p\mu^{\frac{Np}{q+1}}\varphi_\xi(x) - \mu^{-\frac{N}{q+1}}U(\tau)\frac{\ep^{N-2}}{|x|^{N-2}} \\
&\ -(1+o(1))\ep^{N-2}\mu^{-\frac{N}{q+1}}pV(\tau) \mca_{\ep,(d,\tau)}(x) + O(1)\ep^{(N-2)p}\mu^{-\frac{Np}{p+1}} |x|^{2-(N-2)p}\\
&\ + o(1)\mu^{\frac{Np}{q+1}} + o(1)\ep^{N-2}\mu^{-\frac{N}{q+1}}|x|^{2-N}\\
&=U_\mx(x)- \(\frac{b_{N,p}}{\ga_N}\)^p\mu^{\frac{Np}{q+1}}\wth_\xi(x) -\mu^{-\frac{N}{q+1}}U(\tau)\frac{\ep^{N-2}}{|x|^{N-2}} \\
&\ - (1+o(1))\ep^{N-2}\mu^{-\frac{N}{q+1}}pV(\tau) \mca_{\ep,(d,\tau)}(x) + O(1)\ep^{(N-2)p}\mu^{-\frac{Np}{p+1}}|x|^{2-(N-2)p} + o(1)\mu^{\frac{Np}{q+1}}\\
&\ + o(1)\ep^{N-2}\mu^{-\frac{N}{q+1}}|x|^{2-N}.
\end{align*}
This concludes the proof of \eqref{mcp U}.

Finally, we note from \eqref{A mu} that
\begin{align*}
\mca_{\ep,(d,\tau)}(x) &= O(1)\mu^{2-N}\int_{\R^N} \frac{\dy}{|\mu^{-1}(x-\xi)-y|^{N-2}|y+\tau|^{(N-2)p}} \\
&= O(1)\mu^{2-N}|\mu^{-1}(x-\xi)+\tau|^{2-(N-2)p}=O(1)\mu^{(N-2)p-N}|x|^{2-(N-2)p},
\end{align*}
so \eqref{mca est} is true.
\end{proof}

The papers \cite{KP, KM} show that the single- and multi-bubble solutions to slightly subcritical Lane-Emden systems are characterized as local minima of the reduced energy.
Hence, to construct them, one only needs the $C^0$-estimate (corresponding to our \eqref{ene exp}) of the reduced energy.
In contrast, the solutions to \eqref{main system} found here will be characterized as saddle points of the reduced energy, which forces us to derive its $C^1$-estimate.
The following corollary, whose proof is not a mere adaptation of the proof of Proposition \ref{second approx}, roles as a key result in such estimate; refer to Proposition \ref{C1 est} and Lemma \ref{J0ep Jep}.
\begin{proposition}\label{second approx2}
Under the hypotheses of Proposition \ref{second approx}, we have
\begin{multline}\label{R C1}
\left|\nabla_{(d,\tau)} R(x)\right| = O(1) \ep^{(N-2)p}\mu^{-\frac{Np}{p+1}}|x|^{2-(N-2)p} \\
+ o(1)\left[\mu^{\frac{N p}{q+1}} + \ep^{N-2}\mu^{-\frac{N}{q+1}} \({|x|^{2-N}}+\mca_{\ep,(d,\tau)}(x)\)\right] \quad \textup{for } x \in \Omega_\ep
\end{multline}
where $C,\, C_\ep > 0$ in \eqref{O(1)} and \eqref{o(1)} are chosen uniformly for $(d,\tau) \in \Lambda_\delta$. Furthermore,
\begin{equation}\label{mca est C1}
\left|\nabla_{(d,\tau)} \mca_{\ep,(d,\tau)}(x)\right| = O(1)\mu^{(N-2)p-N}|x|^{2-(N-2)p} \quad \textup{for } x \in \Omega_\ep
\end{equation}
where $C > 0$ in \eqref{O(1)} is chosen uniformly for $(d,\tau)\in \Lambda_\delta$.
\end{proposition}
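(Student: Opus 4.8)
The plan is to obtain the $C^1$-bounds by differentiating, in the parameters $(d,\tau)$, the potential-theoretic identity \eqref{gr rep} (for \eqref{R C1}) and the integral \eqref{A mu} (for \eqref{mca est C1}), and then rerunning the estimates from the proof of Proposition~\ref{second approx}. It is crucial \emph{not} to differentiate the already-established expansion \eqref{mcp U} term by term, because the coefficient $(1+o(1))$ in front of $\mca_{\ep,(d,\tau)}$ there, and the $o(1)$'s inside \eqref{R}, are only controlled in the $C^0$-sense, so their $(d,\tau)$-derivatives carry no information; this is precisely why the statement is not a routine consequence of Proposition~\ref{second approx}. Throughout I use that, by \eqref{trans and scaling}, the operators $\pa_d$ and $\pa_{\tau_l}$ are, up to factors bounded on $\Lambda_\delta$ (see \eqref{Lambda delta}), linear combinations of the scaling vector fields $\mu\,\pa_\mu$ and $\mu\,\pa_{\xi_j}$ ($j=1,\dots,N$). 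In particular, applying $\nabla_{(d,\tau)}$ never alters the powers of $\ep$ and $\mu$ that occur; it merely turns $U,V$ (and $H,\whh,\wth,\varphi$) into their first derivatives, for which Lemma~\ref{limit decay} and the regularity statements of Section~\ref{sec pre} furnish the required bounds.

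\textbf{Proof of \eqref{mca est C1}.} After the substitution $z=\mu y+\xi$, formula \eqref{A mu} becomes
\[\mca_{\ep,(d,\tau)}(x)=\mu^{-2}\int_{B(\xi,\mu^\kappa)\setminus\overline{B(0,\ep)}}G_\ep(x,z)\,V^{p-1}(\mu^{-1}(z-\xi))\,|z|^{2-N}\,\textup{d}z,\]
equivalently $-\Delta_x\mca_{\ep,(d,\tau)}(x)=\mone_{B(\xi,\mu^\kappa)}(x)\,\mu^{-2}|x|^{2-N}V^{p-1}(\mu^{-1}(x-\xi))$ in $\Omega_\ep$ with zero boundary data. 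Differentiating in $\lambda\in\{d,\tau_1,\dots,\tau_N\}$ produces three contributions. (i) From $\pa_\lambda(\mu^{-2})$: a bounded multiple of $\mca_{\ep,(d,\tau)}$ when $\lambda=d$ and nothing when $\lambda=\tau_l$, hence $O(1)\mu^{(N-2)p-N}|x|^{2-(N-2)p}$ by \eqref{mca est}. (ii) The interior term in which $\nabla_{(d,\tau)}$ falls on $V^{p-1}(\mu^{-1}(z-\xi))$: bounded by the same convolution estimate that gave \eqref{mca est} once \eqref{decay for v} is replaced by the gradient decay \eqref{decay for grad v} (alternatively, one may use $\pa_{\tau_l}=-\mu\,\pa_{z_l}$ on functions of $\mu^{-1}(z-\xi)$ and integrate by parts). (iii) A surface integral over $\pa(B(\xi,\mu^\kappa)\setminus\overline{B(0,\ep)})$: its part on $\pa B(0,\ep)$ vanishes since $G_\ep(x,\cdot)=0$ there, and its part on $\pa B(\xi,\mu^\kappa)$, estimated through the harmonicity of $G_\ep(x,\cdot)$ on $B(\xi,\mu^\kappa)$ away from its pole, carries an extra positive power of $\mu^\kappa$ and is lower order once $\kappa\in(0,1)$ is small. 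Treating separately the regimes $|x|\gtrsim\mu$ and $\ep\le|x|\lesssim\mu$ (in the latter $\ep\ll\mu\ll\mu^\kappa$, so $x\in B(\xi,\mu^\kappa)$ and $|x-\xi|\sim|\xi|\sim\mu$) and adding (i)--(iii) yields \eqref{mca est C1}.

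\textbf{Proof of \eqref{R C1}.} Since $\mcp U_\mx$ solves \eqref{mcp U eq} on the $(d,\tau)$-independent domain $\Omega_\ep$ and $PV_\mx>0$ there by the maximum principle, one may differentiate \eqref{gr rep} under the integral sign,
\[\nabla_{(d,\tau)}\big(\mcp U_\mx-PU_\mx\big)(x)=\int_{\Omega_\ep}G_\ep(x,y)\,\nabla_{(d,\tau)}\big(PV_\mx^p-V_\mx^p\big)(y)\,\dy,\]
then resplit the integral over $\Omega_\ep\cap B(\xi,\mu^\kappa)$ and $\Omega_\ep\setminus B(\xi,\mu^\kappa)$ for the current $\mu,\xi$ and repeat \eqref{inner est}--\eqref{outer est}. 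The $C^1$-inputs are: Corollary~\ref{rem C1} for $\nabla_{(d,\tau)}R_2$; the fact that $t\mapsto|t|^{p-1}t$ is $C^1$ (here $p>1$ matters, although $p<2$), so that $\nabla_{(d,\tau)}(PV_\mx^p-V_\mx^p)=pV_\mx^{p-1}\nabla_{(d,\tau)}(PV_\mx-V_\mx)$ up to errors satisfying the same bounds as in Proposition~\ref{second approx}; the $C^1$-dependence on $\xi$ of $H(\cdot,\xi)$, $\whh(\cdot,\xi)$ and $\wth(\cdot,\xi)$, the last by Lemma~\ref{wth reg} (where $p<\tfrac{N-1}{N-2}$ enters); and, wherever \eqref{decay for v} was used, the gradient bound \eqref{decay for grad v}, which costs only a harmless factor $\mu^{\kappa_1}$. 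As in Proposition~\ref{second approx} the inner piece then reproduces $\nabla_{(d,\tau)}$ of a multiple of $\mca_{\ep,(d,\tau)}$ (controlled by the first part) plus errors of the type in \eqref{R}, and the outer piece reproduces $\nabla_{(d,\tau)}$ of a multiple of $\varphi_\xi$ plus such errors. Finally one differentiates the explicit part of \eqref{mcp U}: $\nabla_{(d,\tau)}U_\mx$ via \eqref{decay for grad u}, $\nabla_{(d,\tau)}(\mu^{\frac{Np}{q+1}}\wth_\xi)$ via Lemma~\ref{wth reg}, the $|x|^{2-N}$, $U(\tau)$ and $V(\tau)$ factors by direct computation, and $\nabla_{(d,\tau)}\mca_{\ep,(d,\tau)}$ via \eqref{mca est C1}. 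Subtracting and re-using the identity $\wth_\xi=\tilde{\ga}_{N,p}\whh(\cdot,\xi)-\varphi_\xi$ gives \eqref{R C1}.

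\textbf{Where the difficulty lies.} The main obstacle is the $C^1$-upgrade just sketched: one must check that \emph{every} $C^0$-small remainder in the proof of Proposition~\ref{second approx}, notably the $(1+o(1))$ coefficient of $\mca_{\ep,(d,\tau)}$ in \eqref{inner est} and the term $R_3$ of \eqref{R3} (whose smallness relied on Lemma~\ref{high est} and \eqref{basic est whole}), survives differentiation as a genuine $o(1)$ rather than degenerating to a mere $O(1)$. This forces a careful accounting of the extra powers $\mu^\kappa$, $\mu^{\kappa_0}$, $\mu^{\kappa_1}$ produced by $\nabla_{(d,\tau)}$ hitting the cut-off at scale $\mu^\kappa$ and by the gradient decay estimates, and a verification that these powers always land inside the $o(1)$ of \eqref{R}. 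A secondary, purely technical point is the surface-term and near-the-hole analysis for \eqref{mca est C1}, where the two boundary spheres have radii of wildly different orders ($\ep$ and $\mu^\kappa$, with $\ep\ll\mu^\kappa$), so that the behaviour of $G_\ep$ near $\pa B(0,\ep)$ must be used uniformly in $\ep$.
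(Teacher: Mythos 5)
Your proposal is correct and follows essentially the same route as the paper: differentiate \eqref{gr rep} under the integral sign (legitimate since $\Omega_\ep$ and $G_\ep$ are $(d,\tau)$-independent), resplit at scale $\mu^\kappa$, re-run the inner/outer analysis with the $C^1$-upgraded inputs (Corollary \ref{rem C1}, Lemma \ref{wth reg}, \eqref{decay for grad u}--\eqref{decay for grad v}), and reconcile the $(d,\tau)$-dependent domain in $\mca_{\ep,(d,\tau)}$ via the Leibniz--Reynolds transport formula together with a surface-integral estimate on $\pa B(\xi,\mu^\kappa)$, which is precisely what the paper's \eqref{der est1}--\eqref{der est3} and the end of its proof of \eqref{mca est C1} do. One small inaccuracy: the surface contribution to $\nabla_{(d,\tau)}\mca_{\ep,(d,\tau)}$ is not ``lower order'' --- near $|x|\sim\mu^\kappa$ the $\mu^\kappa$-powers cancel exactly against $|x|^{N-(N-2)p}$, so it is borderline of the claimed bound $O(1)\mu^{(N-2)p-N}|x|^{2-(N-2)p}$ (though still within it), and the same borderline sharpness is what forces the paper's careful tracking of $\kappa$.
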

\begin{proof}
Here, we only consider the differentiation of the maps $R$ and $\mca_{\ep,(d,\tau)}$ with respect to the $d$-variable,
since the differentiation with respect to the $\tau$-variable can be handled in a similar way.

\medskip
Let us prove \eqref{R C1} (where $\nabla_{(d,\tau)}$ is replaced with $\nabla_d$).
Thanks to Corollary \ref{rem C1}, we only have to estimate the map $(d,\tau) \in \Lambda_\delta \mapsto \mcp U_\mx-PU_\mx$. We write
\begin{equation}\label{der est}
\begin{aligned}
&\ \nabla_d(\mcp U_\mx-PU_\mx)(x) \\
&= p\int_{\Omega_\ep \cap B(\xi,\mu^\kappa)} G_\ep(x,y) \left[\(\nabla_d V_\mx\) \(PV_\mx^{p-1}-V_\mx^{p-1}\) + PV_\mx^{p-1} \left\{\nabla_d\(PV_\mx-V_\mx\)\right\}\right](y)\dy \\
&\ +p\int_{\Omega_\ep \setminus B(\xi,\mu^\kappa)} G_\ep(x,y) \left[PV_\mx^{p-1} \(\nabla_d PV_\mx\) - V_\mx^{p-1} \(\nabla_d V_\mx\)\right](y)\dy
\end{aligned}
\end{equation}
for $x \in \Omega_\ep$.

Let us estimate the first term in the right-hand side of \eqref{der est}. Using the previous computations in \eqref{inner est},
the fact that $|\nabla_d V_\mx| \le CV_\mx$ in $\R^N$ for some $C > 0$ depending only on $N$ and $p$, and the last inequality in \eqref{ele ineq}, we deduce that
\begin{equation}\label{der est1}
\begin{aligned}
&\ \int_{\Omega_\ep \cap B(\xi,\mu^\kappa)} G_\ep(x,y) (\nabla_d V_\mx)(y) \(PV_\mx^{p-1}-V_\mx^{p-1}\)(y)\dy \\
&= (p-1)\int_{\Omega_\ep \cap B(\xi,\mu^\kappa)} G_\ep(x,y) \left[(\nabla_d V_\mx) V_\mx^{p-2} \(PV_\mx-V_\mx\) +O(1)|PV_\mx-V_\mx|^p\right](y)\dy \\
&= -(1+o(1)) \ep^{N-2}\mu^{-\frac{N}{p+1}} V(\tau) \int_{\Omega_\ep \cap B(\xi,\mu^\kappa)} \frac{G_\ep(x,y) \big(\nabla_d V_\mx^{p-1}\big)(y)}{|y|^{N-2}}\, \dy \\
&\ + O(1)\ep^{(N-2)p}\mu^{-\frac{Np}{p+1}} |x|^{2-(N-2)p} + o(1)\mu^{\frac{Np}{q+1}}.
\end{aligned}
\end{equation}
Applying Corollary \ref{rem C1} and the chain of inequalities $0 \le b^{p-1}-a^{p-1} \le (b-a)^{p-1}$ for $0 < a \le b$, we also compute
\begin{equation}\label{der est2}
\begin{aligned}
&\ \int_{\Omega_\ep \cap B(\xi,\mu^\kappa)} G_\ep(x,y) PV_\mx^{p-1}(y) \left\{\nabla_d\(PV_\mx-V_\mx\)\right\}(y)\dy \\
&=(1+o(1)) \(\frac{N}{p+1}\) d^{-1}\ep^{N-2}\mu^{-\frac{N}{p+1}}V(\tau) \int_{\Omega_\ep \cap B(\xi,\mu^\kappa)} \frac{G_\ep(x,y) V_\mx^{p-1}(y)}{|y|^{N-2}}\, \dy \\
&\ + O(1)\int_{\Omega_\ep \cap B(\xi,\mu^\kappa)} \frac{1}{|x-y|^{N-2}} \left[V_\mx^{p-1}(y)\mu^{\frac{N}{q+1}} \right.\\
& \hspace{125pt} \left. + \(\mu^{\frac{N(p-1)}{q+1}} + \mu^{-\frac{N(p-1)}{p+1}}\frac{\ep^{(N-2)(p-1)}}{|y|^{(N-2)(p-1)}}\) \(\mu^{\frac{N}{q+1}}+\mu^{-\frac{N}{p+1}}\frac{\ep^{N-2}}{|y|^{N-2}} \)\right]\dy \\
&= -(1+o(1)) \ep^{N-2}\nabla_d\(\mu^{-\frac{N}{p+1}}\) V(\tau)\int_{\Omega_\ep \cap B(\xi,\mu^\kappa)} \frac{G_\ep(x,y) V_\mx^{p-1}(y)}{|y|^{N-2}}\, \dy \\
&\ + O(1) \ep^{(N-2)p}\mu^{-\frac{Np}{p+1}} |x|^{2-(N-2)p} + o(1)\mu^{\frac{Np}{q+1}} + o(1)\mu^{-\frac{N}{q+1}}\ep^{N-2}|x|^{2-N}.
\end{aligned}
\end{equation}
To derive the second equality in \eqref{der est2}, we employed the computations in \eqref{inner est} and the identities
\begin{align*}
\mu^{-\frac{N(p-1)}{p+1}+\frac{N}{q+1}}\ep^{(N-2)(p-1)}\int_{\Omega_\ep \cap B(\xi,\mu^\kappa)}\frac{\dy}{|x-y|^{N-2}|y|^{(N-2)(p-1)}} &= O(1)\mu^{\frac{Np}{q+1}}\(\frac{\ep}{\mu}\)^{(N-2)(p-1)} \\
&= o(1)\mu^{\frac{Np}{q+1}} \quad (\textup{for } p > 1)
\end{align*}
and
\begin{align*}
\mu^{\frac{N(p-1)}{q+1}-\frac{N}{p+1}}\ep^{N-2} \int_{\Omega_\ep \cap B(\xi,\mu^\kappa)} \frac{\dy}{|x-y|^{N-2}|y|^{N-2}}
&=O(1)\mu^{\frac{N(p-1)}{q+1}-\frac{N}{p+1}}\ep^{N-2}|x|^{4-N} |\ln|x|| \\
&=\begin{cases} o(1)\mu^{-\frac{N}{q+1}}\ep^{N-2} |x|^{2-N} &\textup{if } \ep \le|x| \le \mu,\\
o(1)\mu^{\frac{Np}{q+1}} &\textup{if } \mu<|x|,
\end{cases}
\end{align*}
which are the consequences of \eqref{basic est o}.

Moreover, by the ($N$-dimensional) Leibniz-Reynolds transport theorem and
\begin{align*}
&\ \ep^{N-2}\mu^{-\frac{N}{p+1}} \int_{\pa B(\xi,\mu^\kappa)}\frac{G_\ep(x,y)V_\mx^{p-1}(y)}{|y|^{N-2}} {\bf v} \cdot \nu_y \textup{d}\sigma_y\\
&= O(1)\mu^{-\frac{N}{p+1}-\frac{N(p-1)}{p+1}+\kappa} \ep^{N-2}\int_{\pa B(\xi,\mu^\kappa)} \frac{\textup{d}\sigma_y}{|x-y|^{N-2}|y|^{N-2}|\mu^{-1}(y-\xi)|^{(N-2)(p-1)}}\\
&= O(1)\(\frac{\ep}{\mu}\)^{N-2}\mu^{\frac{Np}{q+1}+\kappa(2-(N-2)p)}=o(1)\mu^{\frac{Np}{q+1}}
\end{align*}
for $\kappa \in (0,1)$ small, where ${\bf v}$ is the velocity of the moving boundary $\pa B(\xi,\mu^\kappa)$ as $d$ varies,
$\nu_y$ is the outward unit normal vector on $\pa B(\xi,\mu^\kappa)$, and $\textup{d}\sigma_y$ is the surface measure whose variable is $y$, we have
\begin{equation}\label{der est3}
\begin{aligned}
&\ \nabla_d \left[\ep^{N-2}\mu^{-\frac{N}{p+1}} V(\tau) \int_{\Omega_\ep \cap B(\xi,\mu^\kappa)} \frac{G_\ep(x,y)V_\mx^{p-1}(y)}{|y|^{N-2}}\, \dy\right]\\
&= \ep^{N-2}\nabla_d\(\mu^{-\frac{N}{p+1}}\) V(\tau) \int_{\Omega_\ep \cap B(\xi,\mu^\kappa)} \frac{G_\ep(x,y)V_\mx^{p-1}(y)}{|y|^{N-2}}\, \dy \\
&\ +\ep^{N-2}\mu^{-\frac{N}{p+1}} V(\tau) \int_{\Omega_\ep \cap B(\xi,\mu^\kappa)}\frac{G_\ep(x,y)\(\nabla_d V_\mx^{p-1}\)(y)}{|y|^{N-2}}\, \dy+o(1)\mu^{\frac{Np}{q+1}}.
\end{aligned}
\end{equation}
As a result, combining \eqref{der est1}--\eqref{der est3}, we obtain that
\begin{equation}\label{inner c1}
\begin{aligned}
&\ p\int_{\Omega_\ep \cap B(\xi,\mu^\kappa)} G_\ep(x,y) \left[\(\nabla_d V_\mx\) \(PV_\mx^{p-1}-V_\mx^{p-1}\) + PV_\mx^{p-1} \left\{\nabla_d\(PV_\mx-V_\mx\)\right\}\right](y)\dy \\
&= -\nabla_d \left[\ep^{N-2}\mu^{-\frac{N}{p+1}} V(\tau) \int_{\Omega_\ep \cap B(\xi,\mu^\kappa)} \frac{G_\ep(x,y)V_\mx^{p-1}(y)}{|y|^{N-2}}\, \dy\right] \\
&\ + O(1) \ep^{(N-2)p}\mu^{-\frac{Np}{p+1}} |x|^{2-(N-2)p} + o(1)\left[\mu^{\frac{Np}{q+1}} + \mu^{-\frac{N}{q+1}}\ep^{N-2} \(|x|^{2-N}+\mca_{\ep,(d,\tau)}(x)\)\right].
\end{aligned}
\end{equation}

We next estimate the second term in the right-hand side of \eqref{der est}. Since
\begin{equation}\label{der for v}
\begin{aligned}
&\ (\nabla_d V_\mx)(y) \\
&= -d^{-1}\mu^{-\frac{N}{p+1}} \left[\frac{N}{p+1}V(\mu^{-1}(y-\xi)) + \mu^{-1}(y-\xi) \cdot (\nabla V)(\mu^{-1}(y-\xi)) +\tau \cdot (\nabla V)(\mu^{-1}(y-\xi))\right] \\
&= \(\frac{Nb_{N,p}}{q+1}\)d^{-1}\mu^{\frac{N}{q+1}} |y-\xi|^{2-N}+O(1)\mu^{\frac{N}{q+1}+\kappa_1}|y-\xi|^{2-N-\kappa_1} \quad (\textup{by } \eqref{decay for v} \textup{ and } \eqref{decay for grad v})
\end{aligned}
\end{equation}
for $y \in \Omega_\ep \setminus B(\xi,\mu^\kappa)$, we find from \eqref{v exp out} and \eqref{basic est} that
\begin{multline}\label{der outer1}
\int_{\Omega_\ep \setminus B(\xi,\mu^\kappa)} G_\ep(x,y)(\nabla_d V_\mx)(y) V_\mx^{p-1}(y)\dy \\
= \(\frac{Nb_{N,p}^p}{q+1}\) d^{-1}\mu^{\frac{Np}{q+1}} \int_{\Omega_\ep \setminus B(\xi,\mu^\kappa)} G_\ep(x,y) |y-\xi|^{(2-N)p}\, \dy + o(1)\mu^{\frac{Np}{q+1}}
\end{multline}
for $\kappa \in (0,1)$ small. Moreover, by \eqref{pv outer} and \eqref{der for v},
\[PV_\mx^{p-1}(y) =\(\frac{b_{N,p}}{\ga_N}\mu^{\frac{N}{q+1}}G(y,\xi)\)^{p-1}+O(1) \(\frac{\ep^{N-2}\mu^{-\frac{N}{p+1}}}{|y|^{N-2}}\)^{p-1} + O(1)\mu^{\frac{N(p-1)}{q+1}+\tilde{\kappa}}\]
and
\begin{align*}
(\nabla_d PV_\mx)(y) &= \frac{Nd^{-1}}{q+1} \(\frac{b_{N,p}}{\ga_N} \mu^{\frac{N}{q+1}}G(y,\xi)\) - \frac{b_{N,p}}{\ga_N} \mu^{\frac{N}{q+1}} \nabla_d (H(y,\xi))
+ O(1) \(\mu^{-\frac{N}{p+1}} \frac{\ep^{N-2}}{|y|^{N-2}} + \mu^{\frac{N}{q+1}+\tilde{\kappa}}\)
\end{align*}
for $y \in \Omega_\ep \setminus B(\xi,\mu^\kappa)$, where $\tilde{\kappa} \in (0,1)$ is a small number. Hence
\begin{equation}\label{der outer2}
\begin{aligned}
&\ \int_{\Omega_\ep \setminus B(\xi,\mu^\kappa)} G_\ep(x,y) (\nabla_d PV_\mx)(y) PV_\mx^{p-1}(y)\dy \\
&=\(\frac{b_{N,p}}{\ga_N}\)^p\mu^{\frac{Np}{q+1}} \int_{\Omega_\ep \setminus B(\xi,\mu^\kappa)} G_\ep(x,y) \left[\frac{Nd^{-1}}{q+1}G^p(y,\xi)-G^{p-1}(y,\xi)\nabla_d (H(y,\xi))\right]\dy + o(1)\mu^{\frac{Np}{q+1}} \\
&= \(\frac{b_{N,p}}{\ga_N}\)^p\mu^{\frac{Np}{q+1}}\int_{\Omega_\ep \setminus B(\xi,\mu^\kappa)} G_\ep(x,y)
\left[\frac{Nd^{-1}}{q+1}G^p(y,\xi) -p^{-1}\nabla_d \(\ga_N^p|y-\xi|^{(2-N)p}- G^p(y,\xi)\)\right]\dy \\
&\ + o(1)\mu^{\frac{Np}{q+1}}
\end{aligned}
\end{equation}
for $\kappa \in (0,1)$ small, where the second equality in \eqref{der outer2} follows from the estimate
\begin{align*}
&\ \int_{\Omega_\ep \setminus B(\xi,\mu^\kappa)} G_\ep(x,y)G^{p-1}(y,\xi)\nabla_d\(\ga_N|y-\xi|^{2-N}\)\dy\\
&=\int_{\Omega_\ep \setminus B(\xi,\mu^\kappa)} G_\ep(x,y) \(\ga_N^{p-1}|y-\xi|^{(2-N)(p-1)}+O(1)\) \nabla_d\(\ga_N|y-\xi|^{2-N}\)\dy\\
&=p^{-1}\int_{\Omega_\ep \setminus B(\xi,\mu^\kappa)} G_\ep(x,y) \nabla_d\(\ga_N^p|y-\xi|^{(2-N)p}\)\dy + O(1)\mu\int_{\Omega_\ep \setminus B(\xi,\mu^\kappa)}|x-y|^{2-N}|y-\xi|^{1-N}\, \dy\\
&=p^{-1}\int_{\Omega_\ep \setminus B(\xi,\mu^\kappa)} G_\ep(x,y) \nabla_d\(\ga_N^p|y-\xi|^{(2-N)p}\)\dy + O(1)\mu^{1-\kappa(N-3)} \quad \text{(by \eqref{basic est}}).
\end{align*}
Summing \eqref{der outer1} and \eqref{der outer2} shows
\begin{equation}\label{der exp}
\begin{aligned}
&\ p\int_{\Omega_\ep \setminus B(\xi,\mu^\kappa)} G_\ep(x,y) \left[PV_\mx^{p-1} \(\nabla_d PV_\mx\) - V_\mx^{p-1} \(\nabla_d V_\mx\)\right](y)\dy \\
&= \frac{Np}{q+1}\(\frac{b_{N,p}}{\ga_N}\)^p d^{-1}\mu^{\frac{Np}{q+1}} \int_{\Omega_\ep \setminus B(\xi,\mu^\kappa)} G_\ep(x,y) \(G^p(y,\xi)-\ga_N^p|y-\xi|^{(2-N)p}\)\dy\\
&\ +\(\frac{b_{N,p}}{\ga_N}\)^p\mu^{\frac{Np}{q+1}}\int_{\Omega_\ep \setminus B(\xi,\mu^\kappa)} G_\ep(x,y) \nabla_d \(G^p(y,\xi)-\ga_N^p|y-\xi|^{(2-N)p}\)\dy + o(1)\mu^{\frac{Np}{q+1}}.
\end{aligned}
\end{equation}
We note that
\begin{align*}
&\ p^{-1} \nabla_d \(G^p(y,\xi)-\ga_N^p|y-\xi|^{(2-N)p}\)\\
&= \ga_N^p(N-2)\mu_\ep |y-\xi|^{(2-N)p-2} \left[\(1+O(1)|y-\xi|^{N-2}\)^{p-1}\((y-\xi) \cdot \tau+O(1) |y-\xi|^N\)-(y-\xi)\cdot \tau\right]\\
&= \begin{cases}
O(1)\mu |y-\xi|^{(2-N)(p-1)-1} &\textup{if } |y-\xi|\le \mu^\kappa,\\
O(1)\mu |y-\xi|^{1-N} &\textup{if } |y-\xi|> \mu^\kappa.
\end{cases}
\end{align*}
Then arguing as in \eqref{g ep to g} yields
\begin{equation}\label{der exp1}
\int_{\Omega_\ep \setminus B(\xi,\mu^\kappa)} G_\ep(x,y) \nabla_d \(G^p(y,\xi)-\ga_N^p|y-\xi|^{(2-N)p}\)\dy
= (\nabla_d \varphi_\xi)(x)+O(1)\mu+O(1)\ep^{N-2}|x|^{2-N}
\end{equation}
where $\varphi_\xi$ is the solution to \eqref{varphi}. Thus, plugging \eqref{g ep to varp} and \eqref{der exp1} into
\eqref{der exp}, we see
\begin{multline}\label{outer c1}
p\int_{\Omega_\ep \setminus B(\xi,\mu^\kappa)} G_\ep(x,y) \left[PV_\mx^{p-1} \(\nabla_d PV_\mx\) - V_\mx^{p-1} \(\nabla_d V_\mx\)\right](y)\dy\\
=\(\frac{b_{N,p}}{\ga_N}\)^p\mu^{\frac{Np}{q+1}} \left[\frac{Np}{q+1}d^{-1}\varphi_\xi(x)+\nabla_d \varphi_\xi(x)+o(1)+O(1)\ep^{N-2}|x|^{2-N}\right].
\end{multline}

Finally, by putting \eqref{der est}, \eqref{inner c1}, \eqref{outer c1} and Corollary \ref{rem C1} together, we establish \eqref{R C1}.

\medskip
Also, \eqref{mca est} and the fact that
\begin{align*}
\int_{\pa B(\xi,\mu^\kappa)} \frac{\textup{d}\sigma_y}{|x-y|^{N-2} }
&=\mu^{\kappa(N-1)}\int_{\pa B(0,1)} \frac{\textup{d}\sigma_y}{|\mu^\kappa y-(x-\xi)|^{N-2}} \\
&=O(1)\mu^{\kappa(N-1)}\min\left\{\mu^{\kappa(2-N)},|x-\xi|^{2-N}\right\} \\
&=\begin{cases}
O(1) \mu^\kappa &\textup{ if } \ep< |x|<\mu^\kappa,\\
O(1)\mu^{\kappa(N-1)}|x|^{2-N} &\textup{ if }|x|\ge\mu^\kappa
\end{cases}
\end{align*}
imply \eqref{mca est C1} (where $\nabla_{(d,\tau)}$ is replaced with $\nabla_d$).
\end{proof}

\section{Estimate for the reduced energy}\label{sec red}
Recall the energy functional $I_\ep$ in \eqref{ene}, the set $\Lambda_\delta$ in \eqref{Lambda delta}, the relation \eqref{trans and scaling}
between $(d,\tau) \in \Lambda_\delta$ and $(\mu,\xi) \in (0,\infty) \times \R^N$, and the pair $(\mcp U_\mx,PV_\mx)$ defined by \eqref{PUPV eq} and \eqref{mcp U eq}.
Let $J_\ep:\Lambda_\delta \to \R$ be a reduced energy
\begin{equation}\label{red ene}
\begin{aligned}
J_\ep(d,\tau) &= I_\ep(\mcp U_\mx,PV_\mx) \\
&= \int_{\Omega_\ep} \left[\nabla \mcp U_\mx\cdot \nabla PV_\mx - \frac{1}{p+1}(PV_\mx)^{p+1} - \frac{1}{q+1}(\mcp U_\mx)^{q+1}\right].
\end{aligned}
\end{equation}

In the next two propositions, we expand $J_\ep$ in the $C^0$- and $C^1$-sense, respectively.
\begin{proposition}\label{C0 est}
Assume $N \ge 4$, $p \in (1,\frac{N-1}{N-2})$, and $\ep > 0$ small. Then it holds that
\begin{align}
J_\ep(d,\tau) &= \frac{2}{N}\int_{\R^N}U^{q+1} + \frac{\mu_\ep^{(N-2)p-2}}{p+1} \left[\(\frac{b_{N,p}}{\ga_N}\)^p d^{(N-2)p-2} \wth_0(0)\int_{\R^N}U^q + (p+1)\ga_N^{-1} d^{2-N}U(\tau)V(\tau)\right] \nonumber \\
&\ +o(1)\mu_\ep^{(N-2)p-2} \label{ene exp}
\end{align}
where $\wth_0$ is the function in \eqref{wth} with $y = 0$ and $C_\ep > 0$ in \eqref{o(1)} is chosen uniformly for $(d,\tau)\in \Lambda_\delta$.
\end{proposition}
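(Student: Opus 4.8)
The plan is to expand each of the three terms in $J_\ep(d,\tau) = \int_{\Omega_\ep} \nabla \mcp U_\mx \cdot \nabla PV_\mx - \frac{1}{p+1}\int_{\Omega_\ep}(PV_\mx)^{p+1} - \frac{1}{q+1}\int_{\Omega_\ep}(\mcp U_\mx)^{q+1}$ separately, using the refined ansatz expansions \eqref{mcp U}, \eqref{approx for pv}, the decay estimates in Lemma \ref{limit decay}, and the identity \eqref{bnp anp}. For the quadratic term, I would integrate by parts: $\int_{\Omega_\ep}\nabla \mcp U_\mx \cdot \nabla PV_\mx = \int_{\Omega_\ep}(PV_\mx)^p\mcp U_\mx$ (using $-\Delta \mcp U_\mx = (PV_\mx)^p$ and the boundary conditions). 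Then I would substitute \eqref{mcp U} for $\mcp U_\mx$ and \eqref{pv outer}/\eqref{approx for pv} for $PV_\mx$, collecting: the leading self-interaction term $\int_{\R^N}V^{p}U = \int_{\R^N}U^{q+1}$ after rescaling (since $-\Delta U = V^p$ and integrating by parts again), and the corrections coming from the $\wth_\xi$, the $|x|^{2-N}$, and the $\mca_{\ep,(d,\tau)}$ terms. The crucial point, previewed in the paragraph after Proposition \ref{second approx}, is that although $\mca_{\ep,(d,\tau)}$ and $|x|^{2-N}$ are pointwise incomparable, they combine to give a clean $\mu_\ep^{(N-2)p-2}$-order contribution — I expect the $\mca$ term, after pairing with $pV_\mx^{p-1}(PV_\mx)$ and rescaling, to produce exactly the $\ga_N^{-1}d^{2-N}U(\tau)V(\tau)$ piece via the Newtonian-potential identity $\int_{\R^N}\frac{V^{p-1}(y)}{|y+\tau|^{N-2}}\cdots$, the "very neat term" \eqref{second term est4} mentioned in the text.

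For the two nonlinear terms I would Taylor-expand. Writing $PV_\mx = V_\mx - \Theta_V$ where $\Theta_V = \frac{b_{N,p}}{\ga_N}\mu^{\frac{N}{q+1}}H(x,\xi) + \mu^{-\frac{N}{p+1}}V(\tau)\ep^{N-2}|x|^{2-N} - R_2$ is the (positive, lower-order near the bubble) subtraction, I would use $(V_\mx - \Theta_V)^{p+1} = V_\mx^{p+1} - (p+1)V_\mx^p\Theta_V + O(V_\mx^{p-1}\Theta_V^2 + \Theta_V^{p+1})$, integrate, rescale, and track which pieces survive at order $\mu_\ep^{(N-2)p-2}$. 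The same is done for $(\mcp U_\mx)^{q+1}$ using \eqref{mcp U}, where now $q+1$ may be below or above $2$ depending on the range of $p$, so the elementary inequalities \eqref{ele ineq} must be invoked with care. The bulk of the "mass" $\frac{2}{N}\int U^{q+1}$ assembles from $\int V_\mx^{p+1} - \frac{1}{p+1}\cdot - \frac{1}{q+1}\int U_\mx^{q+1}$ combined with the quadratic-term leading piece, using the Pohozaev/energy identity $\int_{\R^N}\nabla U \cdot \nabla V = \int V^{p+1} = \int U^{q+1}$ and $\frac{1}{p+1}+\frac{1}{q+1} = \frac{N-2}{N}$; these give the coefficient $1 - \frac{1}{p+1} - \frac{1}{q+1} = \frac{2}{N}$.

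Assembling the cross terms: the $H(x,\xi)$-correction to $PV_\mx$, paired through the quadratic term and the $(PV_\mx)^{p+1}$ term, should produce (after rescaling $x = \mu y + \xi$ and using $H(\xi,\xi) \to H(0,0)$ modulo the hole correction from Lemma \ref{reg est for puntured}) a term involving $H$ and $H_{\ep,1}$; the $\wth_\xi$ correction in $\mcp U_\mx$ paired with $(PV_\mx)^p$ gives a term with $\wth_\xi(\xi)$; the delicate bookkeeping is that the $\whh$-part of $\wth$ cancels against part of the $H$-contribution (via $\wth_\xi = \tilde\ga_{N,p}\whh - \varphi_\xi$ and the limit relations) so that only $\wth_0(0)$ (the regular part of the auxiliary Green-type function at the origin) remains, with coefficient $\left(\frac{b_{N,p}}{\ga_N}\right)^p \int_{\R^N}U^q$ after rescaling and using $\int_{\R^N}U^q(y)\,dy$ (finite since $(N-2)pq > N$, equivalently $q > \frac{N}{(N-2)p}$, which follows from \eqref{critical} and $p < \frac{N-1}{N-2}$). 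Throughout I would use $\left(\frac{\ep}{\mu_\ep}\right)^{N-2} = \mu_\ep^{(N-2)p-2}$ from \eqref{simple rel} to see that the hole-generated terms ($\ep^{N-2}$-factors against $\mu^{-N/(p+1)}$ or $\mu^{-N/(q+1)}$) are precisely of order $\mu_\ep^{(N-2)p-2}$, matching the $H$- and $\wth$-generated terms — this scale-matching is exactly why the exponent $\alpha$ in \eqref{mu ep} is chosen as it is.

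\textbf{The main obstacle} I anticipate is the cross-term bookkeeping: there are many candidate contributions of order $\mu_\ep^{(N-2)p-2}$ (from $\wth_\xi$, from $|x|^{2-N}$, from $\mca_{\ep,(d,\tau)}$, from $H$, from $H_{\ep,1}$, from the remainder $R$), and one must verify that after all rescalings and integrations-by-parts they collapse to just the two stated terms $\left(\frac{b_{N,p}}{\ga_N}\right)^p d^{(N-2)p-2}\wth_0(0)\int_{\R^N}U^q$ and $(p+1)\ga_N^{-1}d^{2-N}U(\tau)V(\tau)$, with everything else absorbed into $o(1)\mu_\ep^{(N-2)p-2}$. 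In particular one must show the $\mca_{\ep,(d,\tau)}$ term does \emph{not} contaminate the $\wth_0(0)$ coefficient and instead assembles cleanly into the $U(\tau)V(\tau)$ term via $\int_{\R^N}\frac{V^{p-1}(y)U(y)}{|y+\tau|^{N-2}}\,dy$-type potential identities — this is the computation that the text flags as requiring the "potential analysis" and the identity \eqref{second term est4}, and the bulk of it is deferred to Appendix \ref{sec tech}. The estimate \eqref{mca est} and the bound on $R$ in \eqref{R} are the tools that control the error terms; one checks each competing integral is $o(1)\mu_\ep^{(N-2)p-2}$ for $\kappa$ chosen small enough, which is why the footnote warns that $\kappa = 0$ would make the remainder too large.
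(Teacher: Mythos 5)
Your plan expands all three pieces of $J_\ep$ separately (the quadratic cross term, the $(PV_\mx)^{p+1}$ term, and the $(\mcp U_\mx)^{q+1}$ term), substituting the refined ansatz expansions for both $\mcp U_\mx$ and $PV_\mx$ and hoping that the many competing order-$\mu_\ep^{(N-2)p-2}$ cross contributions cancel. The paper takes a structurally simpler route that you have part of in hand but do not fully exploit: the chain of identities
\begin{equation*}
\int_{\Omega_\ep}(PV_\mx)^{p+1}
= \int_{\Omega_\ep}(-\Delta \mcp U_\mx)\, PV_\mx
= \int_{\Omega_\ep}\nabla \mcp U_\mx \cdot \nabla PV_\mx
= \int_{\Omega_\ep}\mcp U_\mx(-\Delta PV_\mx)
= \int_{\Omega_\ep}(\mcp U_\mx) U_\mx^q,
\end{equation*}
which follows from \eqref{mcp U eq} and \eqref{PUPV eq} and eliminates $PV_\mx$ from the reduced energy entirely. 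This collapses $J_\ep$ to
\begin{equation*}
J_\ep = \frac{p}{p+1}\int_{\Omega_\ep}(\mcp U_\mx)U_\mx^q - \frac{1}{q+1}\int_{\Omega_\ep}(\mcp U_\mx)^{q+1},
\end{equation*}
so only the expansion \eqref{mcp U} of $\mcp U_\mx$ about $U_\mx$ is needed. Decomposing further as $\frac{2}{N}\int U_\mx^{q+1} - \frac{1}{p+1}\int(\mcp U_\mx - U_\mx)U_\mx^q$ plus a quadratic remainder then gives the leading constant $\frac{2}{N}\int_{\R^N}U^{q+1}$ from pure arithmetic on the exponents, and the $\wth_0(0)$-, $|x|^{2-N}$- and $\mca_{\ep,(d,\tau)}$-terms in \eqref{mcp U} directly produce the two coefficients in \eqref{ene exp} when tested against $U_\mx^q$ (see \eqref{second term est1}, \eqref{second term est3}, \eqref{second term est4}).

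The specific cancellation mechanism you anticipate --- that "the $\whh$-part of $\wth$ cancels against part of the $H$-contribution" coming from the expansion of $PV_\mx$ --- is speculative and does not match how the coefficients actually arise: in the paper's computation there is no $H$-correction surviving from $PV_\mx$, because $PV_\mx$ never reaches the final expression. The $\wth_0(0)$ piece comes solely from $\wth_\xi$ in \eqref{mcp U} paired against $U_\mx^q$, with no offsetting $H$-term. Your route, if pushed through, would require you to expand both $\mcp U_\mx$ and $PV_\mx$ in all three pieces of $J_\ep$ and then check that the several extra $\mu_\ep^{(N-2)p-2}$-order integrals cancel pairwise; you acknowledge this as the main obstacle but do not resolve it, and without the double integration-by-parts simplification it is not clear the bookkeeping closes. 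The quadratic remainder $\int(\mcp U_\mx)^{q-1}(\mcp U_\mx - U_\mx)^2$-type term, controlled by \eqref{Q mu}--\eqref{Q mu est}, also needs the pointwise bound $0 \le \mcp U_\mx \le U_\mx$ (a maximum-principle consequence) that your outline does not invoke.
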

\begin{proof}
Because
\begin{align*}
\int_{\Omega_\ep}(PV_\mx) ^{p+1} &= \int_{\Omega_\ep} (-\Delta \mcp U_\mx) PV_\mx = \int_{\Omega_\ep}\nabla \mcp U_\mx\cdot \nabla PV_\mx \\
&= \int_{\Omega_\ep} \mcp U_\mx(-\Delta PV_\mx) = \int_{\Omega_\ep}(\mcp U_\mx)U_\mx^q \quad \textup{(by \eqref{PUPV eq} and \eqref{mcp U eq})},
\end{align*}
we know that
\begin{equation}\label{ene exp pre}
\begin{aligned}
J_\ep(d,\tau) &= \int_{\Omega_\ep} \left[\frac{p}{p+1}(\mcp U_\mx)U_\mx^q-\frac{1}{q+1}(\mcp U_\mx)^{q+1}\right] \\
&= \(\frac{p}{p+1}-\frac{1}{q+1}\)\int_{\Omega_\ep}U_\mx^{q+1} + \(\frac{p}{p+1}-1\)
\int_{\Omega_\ep}(\mcp U_\mx-U_\mx)U_\mx^q \\
&\ -\frac{1}{q+1}\int_{\Omega_\ep} \left[(\mcp U_\mx)^{q+1}-U_\mx^{q+1}-(q+1)(\mcp U_\mx-U_\mx)U_\mx^q\right] \\
&= \frac{2}{N}\int_{\Omega_\ep}U_\mx^{q+1} -\frac{1}{p+1}\int_{\Omega_\ep}(\mcp U_\mx-U_\mx)U_\mx^q &(\textup{by \eqref{critical}}) \\
&\ -\frac{q}{2}\int_{\Omega_\ep} (t_x\mcp U_\mx+(1-t_x)U_\mx)^{q-1}(\mcp U_\mx-U_\mx)^2 \dx
\end{aligned}
\end{equation}
where $t_x \in [0,1]$.

For the first and last terms on the rightmost side of \eqref{ene exp pre}, we have
\begin{equation}\label{eng exp1}
\begin{aligned}
\int_{\Omega_\ep} U_\mx^{q+1} &= \int_{\R^N}U^{q+1} + \int_{B(-\tau,\frac{\ep}{\mu})}U^{q+1} + \int_{\R^N\setminus \(\mu^{-1}(\Omega-\xi)\)}U^{q+1} \\
&= \int_{\R^N}U^{q+1} + O(1)\left[\(\frac{\ep}{\mu}\)^N + \mu^{Np}\right] = \int_{\R^N}U^{q+1} + o(1)\mu_\ep^{(N-2)p-2} \quad \text{(by \eqref{simple rel})}
\end{aligned}
\end{equation}
and
\begin{equation}\label{eng exp20}
0\le \int_{\Omega_\ep} (t_x\mcp U_\mx+(1-t_x)U_\mx)^{q-1}(\mcp U_\mx-U_\mx)^2 \dx \le \int_{\Omega_\ep}U_\mx^{q-1}(\mcp U_\mx-U_\mx)^2,
\end{equation}
for $0 \le \mcp U_\mx \le U_\mx$ in $\Omega_\ep$. If we define
\begin{multline}\label{Q mu}
Q_\mx(x) = \int_{\Omega_\ep}U_\mx^{q-1} \left[\mu^{\frac{2Np}{q+1}} + \(\ep^{N-2}\mu^{-\frac{N}{q+1}}\)^2 {|x|^{2(2-N)}} \right. \\
\left. + \(\ep^{N-2}\mu^{-\frac{N}{q+1}}\)^2 \mca_{\ep,(d,\tau)}^2(x) + \(\ep^{(N-2)p}\mu^{-\frac{Np}{p+1}}\)^2 |x|^{2(2-(N-2)p)}\right] \quad \textup{for } x \in \Omega_\ep,
\end{multline}
then \eqref{eng exp20}, \eqref{mcp U} and Lemma \ref{high est2} give
\begin{equation}\label{eng exp2}
\int_{\Omega_\ep} (t_x\mcp U_\mx+(1-t_x)U_\mx)^{q-1}(\mcp U_\mx-U_\mx)^2 \dx = O(1)Q_\mx =o(1)\mu_\ep^{(N-2)p-2}.
\end{equation}

For the second term in the rightmost side of \eqref{ene exp pre}, we see from \eqref{mcp U} that
\begin{equation}\label{second term est}
\begin{aligned}
&\ \int_{\Omega_\ep} (\mcp U_\mx-U_\mx)U_\mx^q \\
&=\int_{\Omega_\ep} U_\mx^q(x) \left[-(1+o(1))\(\frac{b_{N,p}}{\ga_N}\)^p\mu^{\frac{Np}{q+1}}\wth_\xi(x) - (1+o(1))\ep^{N-2}\mu^{-\frac{N}{q+1}}U(\tau) {|x|^{2-N}} \right. \\
&\hspace{65pt} \left. -(1+o(1))\ep^{N-2}\mu^{-\frac{N}{q+1}} pV(\tau)\mca_{\ep,(d,\tau)}(x) + O(1)\ep^{(N-2)p}\mu^{-\frac{Np}{p+1}} |x|^{2-(N-2)p} \right] \dx.
\end{aligned}
\end{equation}
Applying Lemma \ref{wth reg}, \eqref{limit system}, \eqref{decay for u}, and Green's representation formula, we compute
\begin{equation}\label{second term est1}
\begin{aligned}
\mu^{\frac{Np}{q+1}} \int_{\Omega_\ep} U_\mx^q \wth_\xi
&=\mu^{\frac{Np}{q+1}}\wth_\xi(\xi)\int_{\Omega_\ep} U_\mx^q + O(1)\mu_\ep^{\frac{Np}{q+1}} \int_{\Omega_\ep} U_\mx^q(x)|x-\xi|\dx\\
&=\mu^{(N-2)p-2}\wth_0(0)\int_{\R^N}U^q + o(1)\mu_\ep^{(N-2)p-2}
\end{aligned}
\end{equation}
and
\begin{equation}\label{second term est3}
\begin{aligned}
&\ \ep^{N-2}\mu^{-\frac{N}{q+1}} \int_{\Omega_\ep} U_\mx^q(x)|x|^{2-N}\, \dx \\
&=\(\frac{\ep}{\mu}\)^{N-2} \int_{\mu^{-1}(\Omega_\ep-\xi)}\frac{(-\Delta V)(x)}{|x+\tau|^{N-2}}\, \dx \\
&=\(\frac{\ep}{\mu}\)^{N-2} \left[\ga_N^{-1}V(\tau) + O(1)\int_{\R^N \setminus B(0,C\mu^{-1})} \frac{\dx}{|x|^{N-2+((N-2)p-2)q}} + O(1) \(\frac{\ep}{\mu}\)^2\right] \\
&= \(\frac{\ep}{\mu}\)^{N-2}\(\ga_N^{-1}V(\tau)+o(1)\).
\end{aligned}
\end{equation}
Moreover, Lemma \ref{basic est 2}, \eqref{simple rel} and $p > 1$ imply that
\begin{equation}\label{second term est2}
\ep^{(N-2)p}\mu^{-\frac{Np}{p+1}} \int_{\Omega_\ep} U_\mx^q(x)|x|^{2-(N-2)p}\, \dx = O(1)\(\frac{\ep}{\mu}\)^{(N-2)p} = o(1)\mu_\ep^{(N-2)p-2}.
\end{equation}
Next, using \eqref{A mu}, we observe
\begin{equation}\label{a mu est}
\begin{aligned}
&\ \ep^{N-2}\mu^{-\frac{N}{q+1}} \int_{\Omega_\ep} U_\mx^q(x) \mca_{\ep,(d,\tau)}\, \dx \\
&=\ep^{N-2} \int_{\mu^{-1}(\Omega_\ep-\xi)} \int_{B(0,\mu^{\kappa-1})\setminus B(-\tau,\frac{\ep}{\mu})} \frac{G_\ep(\mu x+\xi,\mu y+\xi)U^q(x) V^{p-1}(y)}{|y+\tau|^{N-2}}\, \dy\dx \\
&=\(\frac{\ep}{\mu}\)^{N-2} \ga_N\int_{\mu^{-1}(\Omega_\ep-\xi)} \int_{B(0,\mu^{\kappa-1})\setminus B(-\tau,\frac{\ep}{\mu})} \frac{U^q(x) V^{p-1}(y)}{|x-y|^{N-2} |y+\tau|^{N-2}}\, \dy\dx \\
&\ - \ep^{N-2} \int_{\mu^{-1}(\Omega_\ep-\xi)} \int_{B(0,\mu^{\kappa-1})\setminus B(-\tau,\frac{\ep}{\mu})} \frac{H_\ep(\mu x+\xi,\mu y+\xi)U^q(x) V^{p-1}(y)}{|y+\tau|^{N-2}}\, \dy\dx \\
&=: \textup{(I)}+\textup{(II)}.
\end{aligned}
\end{equation}
Then Lemma \ref{reg part ene est} allows us to deduce
\begin{equation}\label{a mu est1}
\textup{(II)} = o(1)\mu_\ep^{(N-2)p-2}.
\end{equation}
Let us estimate $\textup{(I)}$. By virtue of Fubini's theorem and \eqref{limit system},
\begin{align*}
\int_{\R^N} \int_{\R^N} \frac{U^q(x) V^{p-1}(y)}{|x-y|^{N-2} |y+\tau|^{N-2}}\, \dy\dx &= \int_{\R^N} \frac{V^{p-1}(y)}{|y+\tau|^{N-2}}\int_{\R^N} \frac{U^q(x)}{|x-y|^{N-2}}\, \dx\dy\\
&= \ga_N^{-1}\int_{\R^N} \frac{V^p(y)}{|y+\tau|^{N-2}} \dy = \ga_N^{-2}U(\tau),
\end{align*}
\begin{align*}
&\ \int_{\left[\R^N\setminus B(0,C\mu^{-1})\right] \cup B(-\tau,\frac{\ep}{\mu})} \int_{\R^N} \frac{U^q(x) V^{p-1}(y)}{|x-y|^{N-2} |y+\tau|^{N-2}}\, \dy\dx\\
&= O(1)\int_{\R^N} \frac{V^{p-1}(y)}{|y+\tau|^{N-2}} \left[\int_{\R^N\setminus B(0,C\mu^{-1})} \frac{\dx}{|x-y|^{N-2}|x|^{((N-2)p-2)q}} + \int_{B(-\tau,\frac{\ep}{\mu})} \frac{\dx}{|x-y|^{N-2}} \right] \dy\\
&= O(1) \left[\mu^{((N-2)p-2)q-2}\int_{\R^N} \frac{V^{p-1}(y)}{|y+\tau|^{N-2}} \int_{\R^N\setminus B(0,1)} \frac{\dx}{|x-\mu y|^{N-2}|x|^{((N-2)p-2)q}}\, \dy \right. \\
&\hspace{150pt} \left. + \(\frac{\ep}{\mu}\)^N \int_{\R^N} \frac{V^{p-1}(y)}{|y+\tau|^{N-2}} \frac{\dy}{\big(\frac{\ep}{\mu}\big)^{N-2}+|y+\tau|^{N-2}}\right] \quad \textup{(by \eqref{interior es})}\\
&= O(1) \left[\mu^{((N-2)p-2)q-2}\int_{\R^N} \frac{V^{p-1}(y)}{|y+\tau|^{N-2}}\min\left\{1, |\mu y|^{2-N}\right\} \dy \right. \\
&\hspace{115pt} \left. + \(\frac{\ep}{\mu}\)^N \int_{\R^N} \frac{V^{p-1}(y-\tau)}{|y|^{N-2}} \min\left\{\(\frac{\ep}{\mu}\)^{2-N}, |y|^{2-N}\right\} \dy\right] \\
& \hspace{245pt} (\textup{by } \eqref{basic est whole} \textup{ and } ((N-2)p-2)q > N+2)\\
&= O(1) \left[\mu^{((N-2)p-2)(q+1)-N} + \(\frac{\ep}{\mu}\)^4 \(1+\left|\ln \frac{\ep}{\mu}\right|\mone_{N=4}\)\right] = o(1)
\end{align*}
and
\begin{align*}
&\ \int_{\R^N} \int_{[\R^N\setminus B(0,\mu^{\kappa-1})]\cup B(-\tau,\frac{\ep}{\mu})} \frac{U^q(x) V^{p-1}(y)}{|x-y|^{N-2} |y+\tau|^{N-2}}\, \dy\dx \\
&= \int_{[\R^N\setminus B(0,\mu^{\kappa-1})]\cup B(-\tau,\frac{\ep}{\mu})} \frac{V^{p-1}(y)}{|y+\tau|^{N-2}}\int_{\R^N} \frac{U^q(x)}{|x-y|^{N-2}}\, \dx\dy \\
&= \ga_N^{-1} \int_{[\R^N\setminus B(0,\mu^{\kappa-1})]\cup B(-\tau,\frac{\ep}{\mu})} \frac{V^p(y)}{|y+\tau|^{N-2}}\, \dy \\
&= O(1) \left[\int_{\R^N\setminus B(0,\mu^{\kappa-1})} \frac{\dy}{|y|^{(N-2)(p+1)}} + \int_{B(0,\frac{\ep}{\mu})}\frac{\dy}{|y|^{N-2}}\right] \\
&= O(1) \left[\mu^{(1-\kappa)((N-2)p-2)}+\(\frac{\ep}{\mu}\)^2\right]=o(1).
\end{align*}
Consequently,
\begin{equation}\label{a mu est2}
\textup{(I)} = (1+o(1))\(\frac{\ep}{\mu}\)^{N-2}\ga_N^{-1} U(\tau).
\end{equation}
In view of \eqref{a mu est}--\eqref{a mu est2}, we conclude that
\begin{equation}\label{second term est4}
\ep^{N-2}\mu^{-\frac{N}{q+1}}V(\tau) \int_\Omega U_\mx^q \mca_{\ep,(d,\tau)} = (1+o(1))\(\frac{\ep}{\mu_\ep}\)^{N-2}d^{2-N}\ga_N^{-1} U(\tau)V(\tau).
\end{equation}
Combining \eqref{second term est}--\eqref{second term est2} and \eqref{second term est4}, we establish
\begin{equation}\label{eng exp3}
\begin{aligned}
\int_{\Omega_\ep} (\mcp U_\mx-U_\mx)U_\mx^q &=- \mu^{(N-2)p-2} \(\frac{b_{N,p}}{\ga_N}\)^p \wth_0(0)\int_{\R^N}U^q \\
&\ - (p+1)\ga_N^{-1} \mu_\ep^{(N-2)p-2}d^{2-N} U(\tau)V(\tau) + o(1)\mu_\ep^{(N-2)p-2}.
\end{aligned}
\end{equation}
Thus, from \eqref{ene exp pre}, \eqref{eng exp1}, \eqref{eng exp20}, \eqref{eng exp2} and \eqref{eng exp3}, we arrive at \eqref{ene exp}.
\end{proof}

\begin{proposition}\label{C1 est}
Under the hypotheses of Proposition \ref{C0 est}, we have
\begin{align}
\nabla_{(d,\tau)} J_\ep(d,\tau) &= \(\frac{\mu_\ep^{(N-2)p-2}}{p+1}\) \nabla_{(d,\tau)} \left[\(\frac{b_{N,p}}{\ga_N}\)^p d^{(N-2)p-2} \wth_0(0)\int_{\R^N}U^q + (p+1)\ga_N^{-1}d^{2-N} U(\tau)V(\tau)\right] \nonumber \\
&\ +o(1)\mu_\ep^{(N-2)p-2} \label{ene exp C1}
\end{align}
where $C_\ep > 0$ in \eqref{o(1)} is chosen uniformly for $(d,\tau)\in \Lambda_\delta$.
\end{proposition}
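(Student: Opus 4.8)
The plan is to differentiate the expansion \eqref{ene exp pre} term by term. The decisive simplification is that $\Omega_\ep$ does \emph{not} depend on $(d,\tau)$ (only on $\ep$): the moving balls $B(\xi,\mu^\kappa)$ and $B(-\tau,\frac{\ep}{\mu})$ that occur inside the proofs of Propositions \ref{second approx}, \ref{second approx2} and \ref{C0 est} live there, and the Leibniz--Reynolds transport terms they create have already been controlled. Hence one may differentiate under the integral sign in the third line of \eqref{ene exp pre}, using for the last term its \emph{explicit} Taylor-remainder form $-\frac{1}{q+1}\int_{\Omega_\ep}\big[(\mcp U_\mx)^{q+1}-U_\mx^{q+1}-(q+1)(\mcp U_\mx-U_\mx)U_\mx^q\big]$ rather than its mean-value form, and treat the three resulting pieces separately. (Equivalently, one may start from the identity $\nabla_{(d,\tau)}J_\ep(d,\tau)=\int_{\Omega_\ep}\big[U_\mx^q-(\mcp U_\mx)^q\big]\nabla_{(d,\tau)}\mcp U_\mx$, valid because $(\mcp U_\mx,PV_\mx)$ solves $-\Delta\mcp U_\mx=(PV_\mx)^p$ exactly and both $\mcp U_\mx$ and $\nabla_{(d,\tau)}\mcp U_\mx$ vanish on $\pa\Omega_\ep$; I will use this as a cross-check.)

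The two ``error'' pieces are handled by re-running the estimates of Proposition \ref{C0 est}. For $\frac{2}{N}\nabla_{(d,\tau)}\int_{\Omega_\ep}U_\mx^{q+1}$ one notes that $\int_{\R^N}U^{q+1}$ is a fixed constant, so only the two tail integrals appearing in \eqref{eng exp1} depend on $(d,\tau)$; their derivatives produce surface integrals over $\pa B(-\tau,\frac{\ep}{\mu})$ and over $\pa(\mu^{-1}(\Omega-\xi))$ which, by Lemma \ref{limit decay} and \eqref{simple rel}, are $o(1)\mu_\ep^{(N-2)p-2}$. For the cubic piece, differentiating under the integral and regrouping so that $\nabla_{(d,\tau)}U_\mx$ (of pointwise size $O(U_\mx)$) only ever multiplies a quantity quadratically small in $\mcp U_\mx-U_\mx$ (the second-order Taylor remainder of $t\mapsto t^q$ at $U_\mx$), while $(\mcp U_\mx)^q-U_\mx^q$ multiplies $\nabla_{(d,\tau)}(\mcp U_\mx-U_\mx)$, one reduces everything to integrals of the type $\int_{\Omega_\ep}U_\mx^{q-1}(\cdot)^2$. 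Here the key input is that, by Corollary \ref{rem C1}, Proposition \ref{second approx2} and \eqref{mca est C1}, the function $\nabla_{(d,\tau)}(\mcp U_\mx-U_\mx)$ has the same pointwise bound as the correction terms of \eqref{mcp U} together with $R$; hence these integrals are dominated by $Q_\mx$ in \eqref{Q mu} and, by Lemma \ref{high est2} and \eqref{eng exp2}, are $o(1)\mu_\ep^{(N-2)p-2}$.

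The main piece is $-\frac{1}{p+1}\nabla_{(d,\tau)}\int_{\Omega_\ep}(\mcp U_\mx-U_\mx)U_\mx^q$. I would substitute the expansion \eqref{mcp U} and differentiate it term by term, now using Proposition \ref{second approx2} for $\nabla_{(d,\tau)}R$, \eqref{mca est C1} for $\nabla_{(d,\tau)}\mca_{\ep,(d,\tau)}$, Lemma \ref{wth reg} for the continuity (and boundedness) of $\nabla_x\wth$ and $\nabla_y\wth$, and the $C^1$-analogues of the technical lemmas of Appendix \ref{sec tech}, so as to re-do the chain \eqref{second term est}--\eqref{eng exp3} with $\nabla_{(d,\tau)}$ inserted. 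Three features make this close cleanly. First, $\partial_d\mu^{Np/(q+1)}=\frac{Np}{q+1}d^{-1}\mu^{Np/(q+1)}$ while the $\tau$-derivative vanishes; via \eqref{simple rel} (i.e. $\frac{Np}{q+1}=(N-2)p-2$) this reproduces exactly the derivative of $d^{(N-2)p-2}\wth_0(0)$, and the motion of $\xi$ through $\wth_\xi(\xi)$ contributes only $\mu^{Np/(q+1)}\cdot O(\mu_\ep)=o(1)\mu_\ep^{(N-2)p-2}$ because $\wth_0(0)$ is a constant. Second, after rescaling the integrals carrying $|x|^{2-N}$ and $\mca_{\ep,(d,\tau)}$ converge, as in \eqref{second term est3}, \eqref{a mu est} and \eqref{second term est4}, to the smooth-in-$\tau$ coefficients $\ga_N^{-1}V(\tau)$ and $\ga_N^{-2}U(\tau)$, whose derivatives are precisely what is needed to differentiate $d^{2-N}U(\tau)V(\tau)$. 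Third, $R$, $\mca_{\ep,(d,\tau)}$, and the $|x|^{2-(N-2)p}$-term all have $C^1$-estimates of the same order as their $C^0$-estimates, so differentiation loses no smallness. Summing the three contributions and inserting the factor $-\frac{1}{p+1}$ yields \eqref{ene exp C1}.

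I expect the genuine obstacle to be in this last piece, specifically upgrading the potential-theoretic double-integral computation \eqref{a mu est}--\eqref{second term est4} to its $C^1$-form: one must differentiate $\mca_{\ep,(d,\tau)}$, which involves both the moving region $B(0,\mu^{\kappa-1})\setminus B(-\tau,\frac{\ep}{\mu})$ and the Green's function $G_\ep$, and then re-run the Fubini-type estimates while tracking the boundary contributions — this is exactly what \eqref{mca est C1} and the $C^1$-upgrades relegated to Appendix \ref{sec tech} are designed to supply. Everything else is routine once one observes, via Corollary \ref{rem C1} and Proposition \ref{second approx2}, that differentiating in $(d,\tau)$ costs at most a harmless $d^{-1}$ and some extra $o(1)$'s but never changes an order of magnitude.
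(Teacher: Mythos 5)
Your parenthetical ``cross-check'' identity
$\nabla_{(d,\tau)}J_\ep = \int_{\Omega_\ep}[U_\mx^q-(\mcp U_\mx)^q]\nabla_{(d,\tau)}\mcp U_\mx$
is in fact the paper's \emph{primary} starting point, not a secondary verification, and this choice is exactly where the two routes diverge. The paper never differentiates \eqref{ene exp pre} term by term: from the identity it linearizes $U_\mx^q-(\mcp U_\mx)^q\approx -qU_\mx^{q-1}(\mcp U_\mx-U_\mx)$, substitutes \eqref{mcp U}, and replaces $\nabla_{(d,\tau)}\mcp U_\mx$ by $\nabla_{(d,\tau)}U_\mx$ (the error is quadratic in $\mcp U_\mx-U_\mx$, absorbed via $Q_\mx$ and Lemma~\ref{high est2}). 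This produces three integrals of the form $\int qU_\mx^{q-1}\nabla_{(d,\tau)}U_\mx\cdot\{\wth_\xi,\,|x|^{2-N},\,\mca_{\ep,(d,\tau)}\}$, in each of which the derivative sits on the \emph{explicit, radial} $U_\mx$, so that after rescaling the $\Psi^0_{1,0}$-, $\Phi^0_{1,0}$-identities from \eqref{PsPhmxl10} close the computation by the same Fubini argument as in the $C^0$-case. In particular the $\mca_{\ep,(d,\tau)}$-integral is handled \emph{without ever differentiating $\mca_{\ep,(d,\tau)}$}.

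This is not a cosmetic difference; it sidesteps precisely the obstacle you flag. Your primary route produces $-\frac{1}{p+1}\int_{\Omega_\ep}\nabla_{(d,\tau)}(\mcp U_\mx-U_\mx)\,U_\mx^q$, and after substituting \eqref{mcp U} this carries a term $\ep^{N-2}\mu^{-\frac{N}{q+1}}pV(\tau)\int_{\Omega_\ep}U_\mx^q\,\nabla_{(d,\tau)}\mca_{\ep,(d,\tau)}$, which contributes at the \emph{leading} order $\mu_\ep^{(N-2)p-2}$. To close your expansion you would therefore need the precise first-order behavior of this integral, but \eqref{mca est C1}, which you cite as supplying it, only gives the size bound $O(1)\mu^{(N-2)p-N}|x|^{2-(N-2)p}$ and determines no coefficient; neither Proposition~\ref{second approx2} nor the appendix contains a $C^1$-analogue of \eqref{a mu est}--\eqref{second term est4}. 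As written, then, the $\mca$-contribution to your main piece is undetermined. The gap can be filled: differentiating the exact identity $\int_{\Omega_\ep}\mcp U_\mx U_\mx^q=\int_{\Omega_\ep}(PV_\mx)^{p+1}$ (from \eqref{PUPV eq}, \eqref{mcp U eq}) and integrating by parts gives
\begin{equation*}
\int_{\Omega_\ep}\nabla_{(d,\tau)}(\mcp U_\mx-U_\mx)\,U_\mx^q
= pq\int_{\Omega_\ep}(\mcp U_\mx-U_\mx)\,U_\mx^{q-1}\nabla_{(d,\tau)}U_\mx
+(pq-1)\int_{\Omega_\ep}U_\mx^q\,\nabla_{(d,\tau)}U_\mx,
\end{equation*}
which moves the derivative off $\mca_{\ep,(d,\tau)}$ and onto $U_\mx$ -- but at that point you have reconstructed the paper's identity. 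Your treatment of the two error pieces (tails of $\int U_\mx^{q+1}$, and the Taylor remainder via the $Q_\mx$-bound) is sound and matches the paper's in spirit.
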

\begin{proof}
Here, we only consider the differentiation of the map $J_\ep$ with respect to the $d$-variable.

By \eqref{red ene}, \eqref{PUPV eq}, \eqref{mcp U eq}, \eqref{mcp U}--\eqref{mca est} and \eqref{R C1}--\eqref{mca est C1}, we have
\begin{align}
\nabla_d J_\ep(d,\tau) &= \int_{\Omega_\ep} \left[U_\mx^q - (\mcp U_\mx)^q\right]  \nabla_d \mcp U_\mx \nonumber \\
&= \(\frac{b_{N,p}}{\ga_N}\)^p\mu^{\frac{Np}{q+1}} \int_{\Omega_\ep} \(qU_\mx^{q-1} \nabla_d U_\mx\) \wth_\xi
+ \ep^{N-2}\mu^{-\frac{N}{q+1}} U(\tau) \int_{\Omega_\ep} \frac{\(qU_\mx^{q-1}\nabla_d U_\mx\)(x)}{|x|^{N-2}}\, \dx \nonumber \\
&\ + \ep^{N-2}\mu^{-\frac{N}{q+1}} p V(\tau) \int_{\Omega_\ep} \(qU_\mx^{q-1}\nabla_d U_\mx\)(x)\mca_{\ep,(d,\tau)}(x) \dx + o(1)\mu_\ep^{(N-2)p-2} \label{ene C1 est1} \\
&=: \textup{(III)} + \textup{(IV)} + \textup{(V)} + o(1)\mu_\ep^{(N-2)p-2}. \nonumber
\end{align}
We will estimate each of (III), (IV) and (V).

Firstly, it holds that
\[\int_{\Omega_\ep} \(U_\mx^{q-1} \nabla_d U_\mx\) \wth_\xi = - \mu^{N \over q+1} d^{-1} \wth_0(0) \left[\frac{N}{q+1} \int_{\R^N} U^q + \int_{\R^N} (x \cdot \nabla U(x)) U^{q-1}(x) \dx\right] + o(1)\mu^{N \over q+1}.\]
Combining this and the equality
\[q\int_0^{\infty} r^N U^{q-1}(r) (\pa_rU)(r) \textup{d}r = -N\int_0^{\infty}r^{N-1}U^q(r) \textup{d}r \quad (\text{where } r = |x| \text{ and } U(r) = U(x)),\]
we find
\begin{equation}\label{ene C1 est2}
\textup{(III)} = \mu_\ep^{(N-2)p-2} \(\frac{b_{N,p}}{\ga_N}\)^p \(\frac{N}{q+1}\) d^{(N-2)p-3} \wth_0(0) \int_{\R^N}U^q + o(1)\mu_\ep^{(N-2)p-2}.
\end{equation}

Secondly, arguing similarly to \eqref{second term est3}, we see
\begin{align*}
\int_{\Omega_\ep} \frac{\(qU_\mx^{q-1}\nabla_d U_\mx\)(x)}{|x|^{N-2}}\, \dx &= -\ga_N^{-1} \mu^{2-\frac{qN}{q+1}} d^{-1} \left[\Phi_{1,0}^0(-\tau) + \tau \cdot (\nabla V)(-\tau) + o(1)\right] \\
&= -\(\frac{N}{p+1}\) \ga_N^{-1} \mu^{2-\frac{qN}{q+1}} d^{-1}\(V(\tau)+o(1)\) \quad (\textup{by \eqref{PsPhmxl10}}),
\end{align*}
from which we deduce
\begin{equation}\label{ene C1 est3}
\textup{(IV)} = -\(\frac{N}{p+1}\) \ga_N^{-1} \(\frac{\ep}{\mu}\)^{N-2} d^{-1} U(\tau)V(\tau) + o(1)\mu_\ep^{(N-2)p-2}.
\end{equation}

Lastly, as in \eqref{second term est4}, we obtain
\begin{align*}
\int_{\Omega_\ep} \(qU_\mx^{q-1}\nabla_d U_\mx\)(x)\mca_{\ep,(d,\tau)}(x) \dx &= -\frac{1}{p}\,\ga_N^{-1} \mu^{-\frac{N}{p+1}} d^{-1} \left[\Psi_{1,0}^0(-\tau) + \tau \cdot (\nabla U)(-\tau) + o(1)\right] \\
&= -\frac{1}{p}\(\frac{N}{q+1}\)\ga_N^{-1} \mu^{-\frac{N}{p+1}} d^{-1}\(U(\tau)+o(1)\) \quad (\textup{by \eqref{PsPhmxl10}}).
\end{align*}
Hence
\begin{equation}\label{ene C1 est4}
\textup{(V)} = -\(\frac{N}{q+1}\)\ga_N^{-1} \(\frac{\ep}{\mu}\)^{N-2} d^{-1} U(\tau)V(\tau) + o(1)\mu_\ep^{(N-2)p-2}.
\end{equation}

Putting \eqref{ene C1 est1}--\eqref{ene C1 est4} and \eqref{critical} together, we establish \eqref{ene exp C1} (where $\nabla_{(d,\tau)}$ is replaced with $\nabla_d$).
\end{proof}

\section{Reduction process}\label{sec LSred}
In this section, we outline the main steps of the Lyapunov-Schmidt reduction.

\subsection{Reformulation of the problem}
We consider a map $\mci^*_\ep:L^{\frac{p+1}{p}}(\Omega_\ep)\times L^{\frac{q+1}{q}}(\Omega_\ep) \to X_{p,q}(\Omega_\ep)$ given by $\mci^*_\ep(f,g)=(u,v)$, where
$$
\begin{cases}
-\Delta u=f &\textup{in } \Omega_\ep,\\
-\Delta v=g &\textup{in } \Omega_\ep,\\
u=v=0 &\textup{on } \pa \Omega_\ep
\end{cases}
\quad \textup{or equivalently,} \quad
\begin{cases}
\displaystyle u(x)=\int_{\Omega_\ep} G_\ep(x,y)f(y)\dy,\\
\displaystyle v(x)=\int_{\Omega_\ep} G_\ep(x,y)g(y)\dy
\end{cases}
\textup{for } x \in \Omega_\ep.$$
Then the operator norm of $\mci^*_\ep$ is uniformly bounded in $\ep > 0$ small, and system \eqref{main system} is rewritten as
$$
(u,v)=\mci^*_\ep\(|v|^{p-1}v,|u|^{q-1}u\).
$$

Given a pair $(p,q)$ and a small $\ep > 0$ fixed, we write $X_\ep = X_{p,q}(\Omega_{\ep})$ for brevity.
Also, we recall the functions $(U_\mx,V_\mx)$ and $(\Psi_\mx^l,\Phi_\mx^l)$ in \eqref{UVmx} and \eqref{PsPhmxl}.
For $(d,\tau) \in \Lambda_\delta$, let $Y_{\ep,(d,\tau)}$ and $Z_{\ep,(d,\tau)}$ be the subspaces of $X_\ep$ defined as
\[Y_{\ep,(d,\tau)} = \textup{span} \left\{\(P\Psi_\mx^l,P\Phi_\mx^l\): l = 0, \ldots, N \right\}\]
and
\[Z_{\ep,(d,\tau)} = \left\{(\psi,\phi) \in X_\ep: \int_{\Omega_\ep} \(p V_\mx^{p-1} \Phi_\mx^l \phi + q U_\mx^{q-1} \Psi_\mx^l \psi\) = 0 \textup{ for } l = 0, \ldots, N \right\}\]
where $(P\Psi_\mx^l,P\Phi_\mx^l)$ is the unique smooth solution to the system
\begin{equation}\label{PPsPPhmxl eq}
\begin{cases}
-\Delta P\Psi_\mx^l = pV_\mx^{p-1}\Phi_\mx^l &\textup{in } \Omega_\ep,\\
-\Delta P\Phi_\mx^l = qU_\mx^{q-1}\Psi_\mx^l &\textup{in } \Omega_\ep,\\
P\Psi_\mx^l = P\Phi_\mx^l = 0 &\textup{on } \pa \Omega_\ep.
\end{cases}
\end{equation}
Then, arguing as in \cite[Lemma 3.1]{KP}, one sees that $Y_{\ep,(d,\tau)}$ and $Z_{\ep,(d,\tau)}$ are topological complements of each other.
Besides, if $\Pi_{\ep,(d,\tau)}: X_\ep \to Y_{\ep,(d,\tau)}$ is the linear operator defined by
\begin{equation}\label{Pi}
\Pi_{\ep,(d,\tau)}(\psi,\phi) = \sum_{l=0}^N c_{\ep,l} \(P\Psi_\mx^l,P\Phi_\mx^l\)
\end{equation}
where the coefficients $c_{\ep,l}$'s are defined by the relation
\begin{multline*}
\sum_{m=0}^N c_{\ep,m} \int_{\Omega_\ep} \(p V_\mx^{p-1}\Phi_\mx^l P\Phi_\mx^m + q U_\mx^{q-1} \Psi_\mx^l P\Psi_\mx^m\)
= \int_{\Omega_\ep} \(p V_\mx^{p-1} \Phi_\mx^l \phi + q U_\mx^{q-1} \Psi_\mx^l \psi\) \\
\textup{for } l = 0, \ldots, N,
\end{multline*}
then a slight modification of the proof of \cite[Corollary 3.2]{KP} shows that $\Pi_{\ep,(d,\tau)}$ is well-defined and its operator norm is uniformly bounded in $\ep > 0$ small and $(d,\tau) \in \Lambda_\delta$.

To build a solution to \eqref{main system}, we shall look for $(d,\tau) \in \Lambda_\delta$ such that $(\psi,\phi) = (\psi_{\ep,(d,\tau)},\phi_{\ep,(d,\tau)}) \in Z_{\ep,(d,\tau)}$ satisfies
\begin{enumerate}
\item the auxiliary equation:
\begin{multline}\label{aux eq}
\(\textup{Id}_{X_\ep}-\Pi_{\ep,(d,\tau)}\) \left[\(\mcp U_\mx+\psi, PV_\mx+\phi\) \right.\\
\left. - \mci^*_\ep\(|PV_\mx+\phi|^{p-1} \(PV_\mx+\phi\), |\mcp U_\mx+\psi|^{q-1} \(\mcp U_\mx+\psi\)\)\right] = 0;
\end{multline}
\item the bifurcation equation:
\[\Pi_{\ep,(d,\tau)} \left[\(\mcp U_\mx+\psi, PV_\mx+\phi\) - \mci^*_\ep\(|PV_\mx+\phi|^{p-1} \(PV_\mx+\phi\), |\mcp U_\mx+\psi|^{q-1} \(\mcp U_\mx+\psi\)\)\right] = 0\]
\end{enumerate}
where $\textup{Id}_{X_\ep}$ is the identity operator on $X_\ep$.

Let $L_{\ep,(d,\tau)}: Z_{\ep,(d,\tau)} \to Z_{\ep,(d,\tau)}$ be a bounded linear operator
\begin{equation}\label{lin op}
L_{\ep,(d,\tau)}(\psi,\phi) = (\psi,\phi) - \(\textup{Id}_{X_\ep}-\Pi_{\ep,(d,\tau)}\) \left[\mci^\ast_\ep\(p(PV_\mx)^{p-1}\phi, q(\mcp U_\mx)^{q-1}\psi\)\right]
\end{equation}
for all $(\psi,\phi) \in Z_{\ep,(d,\tau)}$. We also define an error term
\begin{equation}\label{error}
\mce_{\ep,(d,\tau)} = (\mcp U_\mx,PV_\mx) - \mci^*_\ep\((PV_\mx)^p,(\mcp U_\mx)^q\)
\end{equation}
and a nonlinear operator $\mbN_{\ep,(d,\tau)}: Z_{\ep,(d,\tau)} \to L^{\frac{p+1}{p}}(\Omega_\ep)\times L^{\frac{q+1}{q}}(\Omega_\ep)$ by
\begin{align*}
\mbN_{\ep,(d,\tau)}(\psi,\phi) = & \(|PV_\mx+\phi|^{p-1}\(PV_\mx+\phi\) - (PV_\mx)^p - p(PV_\mx)^{p-1}\phi, \right. \\
&\ \left. |\mcp U_\mx+\psi|^{q-1}\(\mcp U_\mx+\psi\) - (\mcp U_\mx)^q - q(\mcp U_\mx)^{q-1}\psi\).
\end{align*}
Then the auxiliary equation is rewritten as
\begin{equation}\label{aux eq1}
\(\textup{Id}_{X_\ep}-\Pi_{\ep,(d,\tau)}\) \left[L_{\ep,(d,\tau)}(\psi,\phi) + \mce_{\ep,(d,\tau)} - \mci^*_\ep(\mbN_{\ep,(d,\tau)}(\psi,\phi))\right] = 0.
\end{equation}

\subsection{Error estimates}
This subsection is devoted to estimating the error term $\mce_{\ep,(d,\tau)}$ in \eqref{error}.
\begin{proposition}
Let $N \ge 4$, $p \in (1,\frac{N}{N-2})$, and $\ep > 0$ small. Then it holds that
\begin{equation}\label{error est}
\|\mce_{\ep,(d,\tau)}\|_{X_\ep} = O(1)\left[\mu^{(N-2)p-2}|\ln \mu|^{\frac{q}{q+1}}+ \(\frac{\ep}{\mu}\)^{\frac{Nq}{q+1}} \left|\ln\(\frac{\ep}{\mu}\)\right|^{\frac{q}{q+1}} \right]
\end{equation}
uniformly in $(d,\tau) \in \Lambda_\delta$.
\end{proposition}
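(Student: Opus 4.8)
The plan is to start from the observation that $\mce_{\ep,(d,\tau)}$ has a vanishing first component. Indeed, $\mcp U_\mx$ is \emph{defined} by \eqref{mcp U eq} as the solution of $-\Delta(\cdot)=(PV_\mx)^p$ in $\Omega_\ep$ with zero boundary data, so the $u$-component of $\mci^*_\ep\big((PV_\mx)^p,(\mcp U_\mx)^q\big)$ equals $\mcp U_\mx$ itself, whereas its $v$-component is the solution of $-\Delta(\cdot)=(\mcp U_\mx)^q$, to be compared with $-\Delta PV_\mx=U_\mx^q$ from \eqref{PUPV eq}. Hence the $v$-component of $\mce_{\ep,(d,\tau)}$ solves $-\Delta(\cdot)=U_\mx^q-(\mcp U_\mx)^q$ in $\Omega_\ep$ with zero boundary data, and therefore
\[\|\mce_{\ep,(d,\tau)}\|_{X_\ep}=\big\|U_\mx^q-(\mcp U_\mx)^q\big\|_{L^{\frac{q+1}{q}}(\Omega_\ep)}.\]
The whole proposition is thus reduced to estimating this single norm.

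Since $0\le \mcp U_\mx\le U_\mx$ in $\Omega_\ep$ (by the maximum principle, as noted in the proof of Proposition \ref{C0 est}) and $q>1$ (on the hyperbola \eqref{critical}, $p<\frac{N+2}{N-2}$ forces $q>\frac{N+2}{N-2}$), the elementary bound $0\le b^q-a^q\le q\,b^{q-1}(b-a)$ for $0\le a\le b$ yields $|U_\mx^q-(\mcp U_\mx)^q|\le q\,U_\mx^{q-1}(U_\mx-\mcp U_\mx)$ pointwise. I would then substitute the expansion \eqref{mcp U}: modulo the remainder $R$ of \eqref{R}, the difference $U_\mx-\mcp U_\mx$ is a linear combination with bounded coefficients of the three shapes $\mu^{\frac{Np}{q+1}}\wth_\xi(x)$, $\ep^{N-2}\mu^{-\frac{N}{q+1}}|x|^{2-N}$ and $\ep^{N-2}\mu^{-\frac{N}{q+1}}\mca_{\ep,(d,\tau)}(x)$, while $R$ contributes $o(1)$ times precisely these same shapes together with the extra shape $\ep^{(N-2)p}\mu^{-\frac{Np}{p+1}}|x|^{2-(N-2)p}$. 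So it suffices to bound $\big\|U_\mx^{q-1}\,(\cdot)\big\|_{L^{\frac{q+1}{q}}(\Omega_\ep)}$ for each of these four shapes; the $o(1)$ pieces of $R$ are then automatically absorbed.

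The core of the argument is the resulting family of weighted integral estimates, which I would carry out by splitting $\Omega_\ep$ into the bubble core $B(\xi,\mu)$ (where $U_\mx\sim\mu^{-N/(q+1)}$), an intermediate annular region surrounding the hole (where $U_\mx$ is comparable to $\mu^{-N/(q+1)}$ and the factors $|x|^{2-N}$ and $\mca_{\ep,(d,\tau)}(x)$ are singular), and the far region $\Omega\setminus B(0,c)$ for a fixed small $c>0$ (where $U_\mx\sim\mu^{Np/(q+1)}$ and $\wth_\xi,\mca_{\ep,(d,\tau)}$ are bounded). On each piece I would use the decay estimate \eqref{decay for u}, the boundedness of $\wth_\xi$ from Lemma \ref{wth reg}, and the pointwise bound \eqref{mca est} for $\mca_{\ep,(d,\tau)}$; rescale by $x\mapsto\mu^{-1}(x-\xi)$ near the bubble and by $x\mapsto\ep^{-1}x$ near the hole; and then track the powers of $\mu$ and of $\ep/\mu$ (using $\mu=\mu_\ep d$ and $(\ep/\mu_\ep)^{N-2}=\mu_\ep^{(N-2)p-2}$ from \eqref{simple rel}). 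The term built on $\mu^{\frac{Np}{q+1}}\wth_\xi$ generates the $\mu^{(N-2)p-2}$ scale, since $\|U_\mx^{q-1}\|_{L^{(q+1)/q}(\Omega_\ep)}$ is of order $\mu^{N/(q+1)}$ up to a logarithmic factor; the terms built on $\ep^{N-2}\mu^{-N/(q+1)}\big(|x|^{2-N}+\mca_{\ep,(d,\tau)}(x)\big)$, after the hole-rescaling, generate the $(\ep/\mu)^{Nq/(q+1)}$ scale; and the factors $|\ln\mu|^{q/(q+1)}$ and $|\ln(\ep/\mu)|^{q/(q+1)}$ enter as the worst (borderline) case of the radial integrals that occur, so that keeping them makes the estimate uniform over the whole range of $p$ without case distinctions on the position of $q$ with respect to critical values such as $\frac{N-2}{2}$.

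I expect the main obstacle to be the bookkeeping in the intermediate region around the hole. There $U_\mx^{q-1}$ is essentially a constant, but the two singular shapes $|x|^{2-N}$ and $\mca_{\ep,(d,\tau)}(x)$ are genuinely incomparable in the pointwise sense — as recorded in the discussion following Proposition \ref{second approx}, one dominates for $\ep<|x|\ll\mu$ and the other for $|x|$ away from $0$ — so one must split this region carefully and pick the right rescaling on each part in order to land on the $(\ep/\mu)^{Nq/(q+1)}$ scale rather than a weaker one, and the resulting radial integrals must be examined with their $q$-dependent exponents, which is exactly what necessitates the logarithmic safety factors in \eqref{error est}.
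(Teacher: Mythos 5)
Your proposal follows essentially the same route as the paper: reduce to $\|U_\mx^q-(\mcp U_\mx)^q\|_{L^{(q+1)/q}(\Omega_\ep)}$ via the vanishing of the $u$-component of the error, bound pointwise using $0\le\mcp U_\mx\le U_\mx$ and an elementary inequality, substitute the expansion \eqref{mcp U}, and estimate weighted integrals. The differences are in implementation. First, once you have invoked \eqref{mca est} to replace $\mca_{\ep,(d,\tau)}(x)$ by $O(1)\mu^{(N-2)p-N}|x|^{2-(N-2)p}$, the ``incomparable singular shapes'' you worry about in the last paragraph have already been absorbed into a single power-law weight (less singular than $|x|^{2-N}$ since $(N-2)p<N$); the incomparability discussion after Proposition \ref{second approx} is relevant to the pointwise \emph{expansion} of $\mcp U_\mx$, not to the $L^{(q+1)/q}$ bound, so the careful region-splitting near the hole that you anticipate is not actually needed. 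The paper instead applies the pre-packaged Lemma \ref{basic est 2} (integrals of $U_\mx^a|x|^{-b}$) directly to each term, which handles all regions at once after a single rescaling; your three-region decomposition should land on the same powers but is heavier bookkeeping. Second, your claim that $\|U_\mx^{q-1}\|_{L^{(q+1)/q}(\Omega_\ep)}$ is of order $\mu^{N/(q+1)}$ up to a log is not quite uniform over the admitted range of $(p,q)$: Lemma \ref{basic est 2} shows that depending on the sign of $((N-2)p-2)\tfrac{q^2-1}{q}-N$ (which changes across the range $p\in(1,\tfrac{N}{N-2})$), the integral $\int_{\Omega_\ep}U_\mx^{(q^2-1)/q}$ is of order $\mu^{N/q}$, $\mu^{N/q}|\ln\mu|$, or a higher power; in the last two cases the resulting contribution after multiplying by $\mu^{Np/q}$ is still $O(\mu^{N(p+1)/q}|\ln\mu|+\mu^{Np})$, which lies within the claimed bound by \eqref{simple rel}, so the proof still closes, but the case distinction needs to be carried out rather than asserted. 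Neither point is a gap in the strategy — they are the details the paper delegates to Lemma \ref{basic est 2} — but you should be aware that the final casework on the exponent is where the $|\ln\mu|^{q/(q+1)}$ and $|\ln(\ep/\mu)|^{q/(q+1)}$ factors actually arise.
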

\begin{proof}
By \eqref{PUPV eq}, \eqref{mcp U eq} and \eqref{mca est}, we have
\begin{equation}\label{error est2}
\|\mce_{\ep,(d,\tau)}\|_{X_\ep} = \left\|\mci^*_\ep\(0,U_\mx^q-(\mcp U_\mx)^q\)\right\|_{X_\ep} = O(1) \left\|U_\mx^q-(\mcp U_\mx)^q\right\|_{L^{\frac{q+1}{q}}(\Omega_\ep)}
\end{equation}
and
\begin{equation}\label{error est3}
\begin{aligned}
\ep^{N-2}\mu^{-\frac{N}{q+1}} \mca_{\ep,(d,\tau)}(x) &= O(1)\ep^{N-2}\mu^{-\frac{N}{q+1}+(N-2)p-N}|x|^{2-(N-2)p} \\
&= \(\frac{\ep}{\mu}\)^{N-2}\mu^{\frac{Np}{q+1}}{|x|^{2-(N-2)p}}
\end{aligned}
\end{equation}
for $x \in \Omega_\ep$. We observe from \eqref{mcp U}, \eqref{error est3} and the estimate $\ep^{(N-2)p}\mu^{-\frac{Np}{p+1}} 
= o(1) (\frac{\ep}{\mu})^{N-2}\mu^{\frac{Np}{q+1}}$ that
\begin{align*}
&\ \left|U_\mx^q(x)-\mcp U_\mx^q(x)\right|^{\frac{q+1}{q}}\\
&\le O(1) \left|U_\mx^{q-1}(x) \(\mu^{\frac{Np}{q+1}} + \ep^{N-2}\mu^{-\frac{N}{q+1}}\(|x|^{2-N} + \mca_{\ep,(d,\tau)}(x)\) + \ep^{(N-2)p}\mu^{-\frac{Np}{p+1}}|x|^{2-(N-2)p}\)\right|^{\frac{q+1}{q}}\\
&\ + \left|\mu^{\frac{Np}{q+1}} + \ep^{N-2}\mu^{-\frac{N}{q+1}}\(|x|^{2-N} + \mca_{\ep,(d,\tau)}(x)\) + \ep^{(N-2)p}\mu^{-\frac{Np}{p+1}}|x|^{2-(N-2)p}\right|^{q+1}\\
&\le O(1)U_\mx^{\frac{q^2-1}{q}}(x) \left[\mu^{\frac{Np}{q}} + \(\frac{\ep}{\mu}\)^{\frac{(N-2)(q+1)}{q}}\mu^{\frac{N(q+1)}{(p+1)q}} |x|^{\frac{(2-N)(q+1)}{q}}
+ \(\frac{\ep}{\mu}\)^{\frac{(N-2)(q+1)}{q}}\mu^{\frac{Np}{q}} |x|^{-\frac{N(p+1)}{q}}\right]\\
&\ + O(1) \left[\mu^{Np} + \ep^{(N-2)(q+1)}\mu^{-N} |x|^{(2-N)(q+1)} + \(\frac{\ep}{\mu}\)^{(N-2)(q+1)}\mu^{Np} |x|^{-N(p+1)}\right]
\end{align*}
for $x \in \Omega_\ep$.

On the other hand, applying Lemma \ref{basic est 2}, $(N-2)q-2-Np = (q+1)(N-(N-2)p)$ and
$$
((N-2)p-2)\frac{q^2-1}{q} + \frac{(N-2)(q+1)}{q} > ((N-2)p-2)\frac{q^2-1}{q}+\frac{((N-2)p-2)(q+1)}{q} > N,
$$
we deduce
\[\mu^{\frac{Np}{q}}\int_{\Omega_\ep}U_\mx^{\frac{q^2-1}{q}}(x)\dx = O(1)\mu^{\frac{Np}{q}} \(\mu^{\frac{N}{q}}|\ln \mu|+\mu^{\frac{Np(q-1)}{q}}\) = O(1)\(\mu^{\frac{N(p+1)}{q}}|\ln \mu|+\mu^{Np}\),\]
\[\(\frac{\ep}{\mu}\)^{\frac{(N-2)(q+1)}{q}}\mu^{\frac{N(q+1)}{(p+1)q}} \int_{\Omega_\ep} U_\mx^{\frac{q^2-1}{q}}(x) |x|^{\frac{(2-N)(q+1)}{q}} \dx
= O(1) \left[\(\frac{\ep}{\mu}\)^N\left|\ln\(\frac{\ep}{\mu}\)\right| + \(\frac{\ep}{\mu}\)^{\frac{(N-2)(q+1)}{q}}\right],\]
and
\[\(\frac{\ep}{\mu}\)^{\frac{(N-2)(q+1)}{q}}\mu^{\frac{Np}{q}} \int_{\Omega_\ep}U_\mx^{\frac{q^2-1}{q}}(x)|x|^{-\frac{N(p+1)}{q}}\, \dx
= \(\frac{\ep}{\mu}\)^{N+\frac{q+1}{q}(N-(N-2)p)}\left|\ln\(\frac{\ep}{\mu}\)\right| + \(\frac{\ep}{\mu}\)^{\frac{(N-2)(q+1)}{q}}.\]
We also note that
\begin{multline*}
\int_{\Omega_\ep} \left[\mu^{Np} + \ep^{(N-2)(q+1)}\mu^{-N} |x|^{(2-N)(q+1)} + \(\frac{\ep}{\mu}\)^{(N-2)(q+1)}\mu^{Np} |x|^{-N(p+1)}\right] \dx\\
=O(1)\left[\mu^{Np} + \(\frac{\ep}{\mu}\)^N\left\{1+\(\frac{\ep}{\mu}\)^{(q+1)(N-(N-2)p)}\right\}\right] = O(1)\left[\mu^{Np} + \(\frac{\ep}{\mu}\)^N\right].
\end{multline*}

Therefore, we arrive at
\begin{align*}
\left\|U_\mx^q-\mcp U_\mx^q\right\|_{L^{\frac{q+1}{q}}(\Omega_\ep)}
&= O(1)\left[\mu^{\frac{N(p+1)}{q}}|\ln \mu| + \mu^{Np} + \(\frac{\ep}{\mu}\)^N \left|\ln\(\frac{\ep}{\mu}\)\right| + \(\frac{\ep}{\mu}\)^{\frac{(N-2)(q+1)}{q}}\right]^\frac{q}{q+1}\\
&= O(1)\left[\mu^{(N-2)p-2}|\ln \mu|^{\frac{q}{q+1}}+ \(\frac{\ep}{\mu}\)^{\frac{Nq}{q+1}} \left|\ln\(\frac{\ep}{\mu}\)\right|^{\frac{q}{q+1}} \right] \quad \textup{(by \eqref{eq main1} and \eqref{simple rel})},
\end{align*}
which together with \eqref{error est2} yields \eqref{error est}.
\end{proof}

\subsection{Linear theory}
Recall the operator $L_{\ep,(d,\tau)}$ in \eqref{lin op}. Here, we examine the unique solvability of the linear equation
\begin{equation}\label{lin eq}
L_{\ep,(d,\tau)}(\psi,\phi) = (h_1,h_2)
\end{equation}
for any given $(h_1,h_2) \in Z_{\ep,(d,\tau)}$.

Employing Proposition \ref{nondeg}, we obtain the following result. Its proof is similar to that of \cite[Proposition 4.2]{KP}, so we omit it.
\begin{proposition}\label{lin prop}
Assume $N \ge 4$, $p \in (1,\frac{N}{N-2})$, $\ep > 0$ small, and $(d,\tau) \in \Lambda_\delta$. Then there exists a constant $C > 0$ independent of $\ep > 0$ and $(d,\tau) \in \Lambda_\delta$ such that
\[\|L_{\ep,(d,\tau)}(\psi,\phi)\|_{X_\ep} \ge C\|(\psi,\phi)\|_{X_\ep} \quad \textup{for all } (\psi,\phi) \in Z_{\ep,(d,\tau)}.\]
\end{proposition}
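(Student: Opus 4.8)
The plan is to argue by contradiction, following the standard blow-up/compactness scheme for Lyapunov--Schmidt linear theory (as in \cite[Proposition 4.2]{KP}), adapted to the present nonlinear projection $\mcp U_\mx$ and the punctured domain $\Omega_\ep$. Suppose the estimate fails. Then there are sequences $\ep_n \to 0$, $(d_n,\tau_n) \in \Lambda_\delta$, and $(\psi_n,\phi_n) \in Z_{\ep_n,(d_n,\tau_n)}$ with $\|(\psi_n,\phi_n)\|_{X_{\ep_n}} = 1$ but $\|L_{\ep_n,(d_n,\tau_n)}(\psi_n,\phi_n)\|_{X_{\ep_n}} =: \|(h_{1,n},h_{2,n})\|_{X_{\ep_n}} \to 0$. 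Since $\Lambda_\delta$ is compact, passing to a subsequence we may assume $(d_n,\tau_n) \to (d_0,\tau_0) \in \Lambda_\delta$; write $\mu_n = \mu_{\ep_n} d_n$, $\xi_n = \mu_n \tau_n$, so $\mu_n \to 0$ and $\xi_n \to 0$.

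First I would unwind the definition \eqref{lin op}: applying $-\Delta$ componentwise and using that $\mci^*_{\ep_n}$ inverts $-\Delta$ with zero Dirichlet data, equation $L_{\ep_n}(\psi_n,\phi_n) = (h_{1,n},h_{2,n})$ becomes
\[
\begin{cases}
-\Delta \psi_n = p(PV_{\mu_n,\xi_n})^{p-1}\phi_n + (\text{terms from } \Pi_{\ep_n}) - \Delta h_{1,n} & \text{in } \Omega_{\ep_n},\\
-\Delta \phi_n = q(\mcp U_{\mu_n,\xi_n})^{q-1}\psi_n + (\text{terms from } \Pi_{\ep_n}) - \Delta h_{2,n} & \text{in } \Omega_{\ep_n},
\end{cases}
\]
with zero boundary data; the $\Pi_{\ep_n}$-terms are a finite linear combination of $(P\Psi_{\mu_n,\xi_n}^l, P\Phi_{\mu_n,\xi_n}^l)$. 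Next I would rescale: set $\tilde\psi_n(x) = \mu_n^{N/(q+1)}\psi_n(\mu_n x + \xi_n)$ and $\tilde\phi_n(x) = \mu_n^{N/(p+1)}\phi_n(\mu_n x + \xi_n)$ on the expanding domain $\mu_n^{-1}(\Omega_{\ep_n} - \xi_n)$, which invades $\R^N \setminus \{-d_0^{-1}\tau_0\cdot 0\}$... more precisely, since $\ep_n/\mu_n = \ep_n^{1-\alpha}d_n^{-1} \to 0$, the rescaled inner hole $B(-\tau_n, \ep_n/\mu_n)$ shrinks to a point and the rescaled outer boundary escapes to infinity, so the limiting domain is all of $\R^N$. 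The rescaled norms are preserved (the $X_{p,q}$ norm is scale-invariant under \eqref{UVmx}-type scaling), so $\|(\tilde\psi_n,\tilde\phi_n)\|_{X_{p,q}} \le C$. Using $PV_{\mu_n,\xi_n}^{p-1} \to V^{p-1}$ and $\mcp U_{\mu_n,\xi_n}^{q-1} \to U^{q-1}$ in the appropriate local sense (here Lemma \ref{first app}, Proposition \ref{second approx}, and the decay estimates \eqref{decay for u}--\eqref{decay for v} are what make the coefficients converge and the rescaled Green's-function kernels behave), standard elliptic estimates give a weakly convergent subsequence $(\tilde\psi_n,\tilde\phi_n) \rightharpoonup (\tilde\psi_0,\tilde\phi_0)$ solving the limiting linearized system $-\Delta\tilde\psi_0 = pV^{p-1}\tilde\phi_0$, $-\Delta\tilde\phi_0 = qU^{q-1}\tilde\psi_0$ in $\R^N$ with $(\tilde\psi_0,\tilde\phi_0) \in \dot W^{2,(p+1)/p}\times \dot W^{2,(q+1)/q}$. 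The orthogonality conditions defining $Z_{\ep_n,(d_n,\tau_n)}$ pass to the limit (the test functions $(\Psi_{\mu_n,\xi_n}^l, \Phi_{\mu_n,\xi_n}^l)$ rescale to $(\Psi_{1,0}^l, \Phi_{1,0}^l)$ and the integrals converge) and force $(\tilde\psi_0,\tilde\phi_0)$ to be orthogonal to the full kernel described in Proposition \ref{nondeg}; by non-degeneracy, $(\tilde\psi_0,\tilde\phi_0) = 0$.

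The final step is to upgrade weak-to-zero convergence to a contradiction with $\|(\psi_n,\phi_n)\|_{X_{\ep_n}} = 1$. I would test the equation against $(\psi_n,\phi_n)$ itself: since $(\psi_n,\phi_n) \in Z_{\ep_n,(d_n,\tau_n)}$, the $\Pi_{\ep_n}$-contribution drops out of the pairing, and one gets $\|(\psi_n,\phi_n)\|_{X_{\ep_n}}^2 \sim \int p(PV_{\mu_n,\xi_n})^{p-1}\phi_n\psi_n + \int q(\mcp U_{\mu_n,\xi_n})^{q-1}\psi_n\phi_n + o(1)$, and after rescaling the right side is controlled by integrals of $V^{p-1}\tilde\phi_n\tilde\psi_n$ and $U^{q-1}\tilde\psi_n\tilde\phi_n$ over the rescaled domain. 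These vanish as $n\to\infty$ by the local compactness of the embedding against the fast-decaying weights $U^{q-1}$, $V^{p-1}$ together with tightness at infinity (the $X_{p,q}$ bound controls the tails, because the weights decay like $|x|^{-((N-2)p-2)(q-1)}$ and $|x|^{-(N-2)(p-1)}$, integrable against the Sobolev exponents thanks to $p>1$). This yields $1 = \|(\psi_n,\phi_n)\|_{X_{\ep_n}}^2 \to 0$, a contradiction. The main obstacle I anticipate is making the weight-convergence $PV_{\mu_n,\xi_n}^{p-1}\phi_n \to V^{p-1}\tilde\phi_0$ and $\mcp U_{\mu_n,\xi_n}^{q-1}\psi_n \to U^{q-1}\tilde\psi_0$ rigorous in the $L^{(p+1)/p}$ and $L^{(q+1)/q}$ senses uniformly in the tail region: near $x = 0$ (the hole) and near $\partial\Omega$ the projected bubbles deviate from $U_\mx, V_\mx$, and one must check, via Lemma \ref{first app} and Proposition \ref{second approx} (in particular the slow decay of $U$ and the size of $\mca_{\ep,(d,\tau)}$ recorded in \eqref{mca est}), that these deviations do not spoil the compactness — exactly the point where the present system departs from the scalar Coron problem and from \cite{KP}. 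Since the proposition is quantitatively identical to \cite[Proposition 4.2]{KP} once these ingredients are in place, I would, as the authors indicate, present only the modifications forced by $\mcp U_\mx$ and the punctured geometry and refer to \cite{KP} for the remainder.
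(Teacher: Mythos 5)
Your proposal matches the paper's approach exactly: the paper itself provides no written proof for Proposition~\ref{lin prop}, stating only that it follows from Proposition~\ref{nondeg} by the argument of \cite[Proposition~4.2]{KP}, and what you reconstruct is precisely the standard contradiction/blow-up scheme of that reference — normalize, rescale about the bubble, pass to a limit solving the linearized system on $\R^N$, invoke the orthogonality in $Z_{\ep,(d,\tau)}$ together with the non-degeneracy of Proposition~\ref{nondeg} to force the limit to vanish, and then show the weighted terms drop out to contradict the unit normalization.

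One technical point in your closing paragraph deserves tightening. You write that you would ``test the equation against $(\psi_n,\phi_n)$'' and conclude $\|(\psi_n,\phi_n)\|_{X_{\ep_n}}^2 \sim \int p(PV)^{p-1}\phi_n\psi_n + \int q(\mcp U)^{q-1}\psi_n\phi_n + o(1)$. But the $X_{p,q}$ norm is a Banach norm of the form $\|\Delta\psi\|_{L^{(p+1)/p}} + \|\Delta\phi\|_{L^{(q+1)/q}}$, not a Hilbert norm, so no such identity is available; moreover the bilinear pairing $\int \nabla\psi\cdot\nabla\phi$ associated with the Hamiltonian structure is indefinite. The correct way to close the argument (and the way it is done in \cite{KP}) is to use the identity $(\psi_n,\phi_n) = (\textup{Id}-\Pi_n)\mci^*_{\ep_n}(p(PV_\mx)^{p-1}\phi_n, q(\mcp U_\mx)^{q-1}\psi_n) + (h_{1,n},h_{2,n})$ together with the uniform boundedness of $\mci^*_{\ep_n}$ and $\Pi_{\ep_n}$, reducing matters to showing $\|(PV_\mx)^{p-1}\phi_n\|_{L^{(p+1)/p}(\Omega_{\ep_n})} + \|(\mcp U_\mx)^{q-1}\psi_n\|_{L^{(q+1)/q}(\Omega_{\ep_n})} \to 0$. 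The compactness mechanism you describe — local compactness against the decaying weights, tightness at infinity, and control of the deviation of $(\mcp U_\mx, PV_\mx)$ from $(U_\mx,V_\mx)$ near the hole and near $\partial\Omega$ using Lemma~\ref{first app} and Proposition~\ref{second approx} — is exactly what establishes that decay, so the rest of your reasoning carries over once the quantity being estimated is the right one.
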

\noindent Because of the assumption $p > 1$, one can write the operator $L_{\ep,(d,\tau)}$ as the sum of the identity operator on $X_\ep$ and a compact operator; see \cite[Remark 4.1]{KP}.
As a consequence, Proposition \ref{lin prop} and the Fredholm alternative readily imply the following result.
\begin{cor}\label{lin cor}
Under the hypotheses of Proposition \ref{lin prop}, let $(h_1,h_2) \in Z_{\ep,(d,\tau)}$. Then \eqref{lin eq} admits a unique solution $(\psi,\phi) \in Z_{\ep,(d,\tau)}$. Moreover,
\[\|(h_1,h_2)\|_{X_\ep} \ge C\|(\psi,\phi)\|_{X_\ep}\]
where $C > 0$ is the constant in Proposition \ref{lin prop}. In particular, the operator $L_{\ep,(d,\tau)}^{-1}: Z_{\ep,(d,\tau)} \to Z_{\ep,(d,\tau)}$
given as $L_{\ep,(d,\tau)}^{-1}(h_1,h_2) = (\psi,\phi)$ is well-defined and uniformly bounded in $\ep > 0$ and $(d,\tau) \in \Lambda_\delta$.
\end{cor}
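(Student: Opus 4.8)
The plan is to read the conclusion off from Proposition \ref{lin prop} together with the Fredholm alternative, exploiting the structural decomposition of $L_{\ep,(d,\tau)}$ noted right after that proposition. First I would record that, by \eqref{lin op}, one may write $L_{\ep,(d,\tau)} = \textup{Id}_{Z_{\ep,(d,\tau)}} + K_{\ep,(d,\tau)}$ on $Z_{\ep,(d,\tau)}$, where
\[
K_{\ep,(d,\tau)}(\psi,\phi) = -\(\textup{Id}_{X_\ep}-\Pi_{\ep,(d,\tau)}\) \left[\mci^*_\ep\(p(PV_\mx)^{p-1}\phi,\, q(\mcp U_\mx)^{q-1}\psi\)\right].
\]
The key structural fact is that $K_{\ep,(d,\tau)}$ is a \emph{compact} operator on $Z_{\ep,(d,\tau)}$ for each fixed small $\ep > 0$ and $(d,\tau) \in \Lambda_\delta$. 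This is precisely where the standing assumption $p > 1$ enters: the multiplication maps $\phi \mapsto (PV_\mx)^{p-1}\phi$ and $\psi \mapsto (\mcp U_\mx)^{q-1}\psi$ gain enough integrability that their composition with the smoothing operator $\mci^*_\ep$ (which improves regularity by two derivatives) and the bounded projection $\textup{Id}_{X_\ep}-\Pi_{\ep,(d,\tau)}$ is compact by the Rellich--Kondrachov embedding; this is the content of \cite[Remark 4.1]{KP}, which I would simply cite.

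Next I would invoke Proposition \ref{lin prop}. The a priori estimate $\|L_{\ep,(d,\tau)}(\psi,\phi)\|_{X_\ep} \ge C \|(\psi,\phi)\|_{X_\ep}$ for all $(\psi,\phi) \in Z_{\ep,(d,\tau)}$ shows in particular that $L_{\ep,(d,\tau)}$ is injective on $Z_{\ep,(d,\tau)}$: if $(\psi,\phi)$ and $(\psi',\phi')$ both solve \eqref{lin eq}, their difference lies in the kernel of $L_{\ep,(d,\tau)}$, hence has zero $X_\ep$-norm. Since $L_{\ep,(d,\tau)}$ is an injective operator of the form identity plus compact on the Banach space $Z_{\ep,(d,\tau)}$, the Fredholm alternative forces it to be surjective as well, so it is a bijection of $Z_{\ep,(d,\tau)}$ onto itself and $L_{\ep,(d,\tau)}^{-1}$ exists and is bounded. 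Consequently, for any $(h_1,h_2) \in Z_{\ep,(d,\tau)}$ equation \eqref{lin eq} has the unique solution $(\psi,\phi) = L_{\ep,(d,\tau)}^{-1}(h_1,h_2) \in Z_{\ep,(d,\tau)}$.

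Finally, the uniform bound is immediate from Proposition \ref{lin prop} applied to this solution: $\|(h_1,h_2)\|_{X_\ep} = \|L_{\ep,(d,\tau)}(\psi,\phi)\|_{X_\ep} \ge C \|(\psi,\phi)\|_{X_\ep}$, with $C$ the constant of Proposition \ref{lin prop}, which is independent of $\ep > 0$ small and $(d,\tau) \in \Lambda_\delta$; equivalently $\|L_{\ep,(d,\tau)}^{-1}\| \le C^{-1}$ uniformly. The only genuinely substantive point is the compactness of $K_{\ep,(d,\tau)}$ --- the Fredholm step and the uniform estimate are then automatic --- and even that is covered by the cited remark from \cite{KP}, so in the write-up this corollary follows in a few lines once Proposition \ref{lin prop} is in hand.
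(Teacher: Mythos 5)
Your proof is correct and follows essentially the same route the paper indicates: decompose $L_{\ep,(d,\tau)}$ as identity plus a compact operator (citing \cite[Remark 4.1]{KP} for the compactness, which is where $p>1$ is used), invoke Proposition \ref{lin prop} for injectivity and the a priori estimate, and conclude surjectivity by the Fredholm alternative. Nothing to add.
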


\subsection{Nonlinear problems}
Owing to Corollary \ref{lin cor}, the auxiliary equation \eqref{aux eq1} is further reduced to
\begin{equation}\label{aux eq2}
(\psi,\phi) = L_{\ep,(d,\tau)}^{-1}\(\textup{Id}_{X_\ep}-\Pi_{\ep,(d,\tau)}\) \left[ \mci^*_\ep(\mbN_{\ep,(d,\tau)}(\psi,\phi))-\mce_{\ep,(d,\tau)}\right].
\end{equation}
The next proposition concerns its unique solvability.
\begin{proposition}\label{nonlin prop}
Assume $N \ge 4$, $p \in (1,\frac{N-1}{N-2})$, $\ep > 0$ small, and $(d,\tau) \in \Lambda_\delta$.
Then \eqref{aux eq2} has a unique solution $(\psi_{\ep,(d,\tau)},\phi_{\ep,(d,\tau)}) \in Z_{\ep,(d,\tau)}$ satisfying
\begin{equation}\label{nonlin est}
\left\|\(\psi_{\ep,(d,\tau)},\phi_{\ep,(d,\tau)}\)\right\|_{X_\ep} \le C\|\mce_{\ep,(d,\tau)}\|_{X_\ep}
\end{equation}
where $C > 0$ is independent of $\ep > 0$ and $(d,\tau) \in \Lambda_\delta$.
Furthermore, $(\psi_{\ep,(d,\tau)},\phi_{\ep,(d,\tau)}) \in (L^{\infty}(\Omega_\ep))^2$ and the map $(d,\tau) \in \Lambda_\delta \mapsto (\psi_{\ep,(d,\tau)},\phi_{\ep,(d,\tau)}) \in {X_\ep}$ is of $C^1$-class.
\end{proposition}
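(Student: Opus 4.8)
The plan is to solve the fixed-point equation \eqref{aux eq2} by the contraction mapping principle, and then to obtain the $L^\infty$-bound and the $C^1$-dependence on $(d,\tau)$ via elliptic regularity and the implicit function theorem, respectively. First I would set, for fixed small $\ep$ and $(d,\tau) \in \Lambda_\delta$,
\[T_{\ep,(d,\tau)}(\psi,\phi) := L_{\ep,(d,\tau)}^{-1}\(\textup{Id}_{X_\ep}-\Pi_{\ep,(d,\tau)}\)\left[\mci^*_\ep\(\mbN_{\ep,(d,\tau)}(\psi,\phi)\) - \mce_{\ep,(d,\tau)}\right],\]
which is a well-defined map on $Z_{\ep,(d,\tau)}$ by Corollary \ref{lin cor}, and work on the ball $B_\rho := \{(\psi,\phi) \in Z_{\ep,(d,\tau)}: \|(\psi,\phi)\|_{X_\ep} \le \rho\}$ with $\rho = C_0 \|\mce_{\ep,(d,\tau)}\|_{X_\ep}$ for a suitably large constant $C_0$; recall that $\|\mce_{\ep,(d,\tau)}\|_{X_\ep} \to 0$ as $\ep \to 0$ by \eqref{error est}.

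The heart of the matter is the pair of estimates, both valid because $p > 1$ (so that $\mbN_{\ep,(d,\tau)}$ collects only superlinear contributions):
\[\|\mci^*_\ep\(\mbN_{\ep,(d,\tau)}(\psi,\phi)\)\|_{X_\ep} \le C\(\|(\psi,\phi)\|_{X_\ep}^{\min\{p,2\}} + \|(\psi,\phi)\|_{X_\ep}^{\min\{q,2\}}\)\]
for $(\psi,\phi) \in B_\rho$, together with the companion Lipschitz estimate $\|\mci^*_\ep(\mbN_{\ep,(d,\tau)}(\psi_1,\phi_1) - \mbN_{\ep,(d,\tau)}(\psi_2,\phi_2))\|_{X_\ep} \le C(\rho^{\min\{p,2\}-1}+\rho^{\min\{q,2\}-1})\|(\psi_1,\phi_1)-(\psi_2,\phi_2)\|_{X_\ep}$ for $(\psi_i,\phi_i) \in B_\rho$. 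These follow from the uniform bound on $\|\mci^*_\ep\|$, the elementary inequalities $\big| |t+h|^{s-1}(t+h) - |t|^{s-1}t - s|t|^{s-1}h \big| \le C|h|^s$ when $s \in (1,2]$ and $\le C(|t|^{s-2}h^2 + |h|^s)$ when $s \ge 2$ (used with $s = p$ on the first component and $s = q$ on the second), Hölder's inequality, the Sobolev embeddings $W^{2,\frac{p+1}{p}}(\Omega_\ep) \cap W_0^{1,p^\ast}(\Omega_\ep) \hookrightarrow L^{q+1}(\Omega_\ep)$ and $W^{2,\frac{q+1}{q}}(\Omega_\ep) \cap W_0^{1,q^\ast}(\Omega_\ep) \hookrightarrow L^{p+1}(\Omega_\ep)$, and the scale-invariant bounds $0 \le \mcp U_\mx \le U_\mx$, $0 \le PV_\mx \le V_\mx$, $\|U_\mx\|_{L^{q+1}(\R^N)} = \|U\|_{L^{q+1}(\R^N)}$, $\|V_\mx\|_{L^{p+1}(\R^N)} = \|V\|_{L^{p+1}(\R^N)}$. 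Combined with the uniform bounds on $\|L_{\ep,(d,\tau)}^{-1}\|$ and $\|\textup{Id}_{X_\ep}-\Pi_{\ep,(d,\tau)}\|$, they show that, for $\ep$ small, $T_{\ep,(d,\tau)}$ maps $B_\rho$ into itself and is a contraction there; Banach's fixed point theorem then yields a unique $(\psi_{\ep,(d,\tau)},\phi_{\ep,(d,\tau)}) \in B_\rho$ solving \eqref{aux eq2}, which in particular satisfies \eqref{nonlin est}.

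For the $L^\infty$-bound, I would reinsert the fixed point into \eqref{aux eq2}: its right-hand side is $\mci^*_\ep$ applied to a pair of functions lying in $L^r(\Omega_\ep)$-spaces slightly better than the critical Lebesgue exponents, so that a bootstrap based on the regularity theory for linear Hamiltonian-type elliptic systems developed in Appendix \ref{sec bdd} upgrades $(\psi_{\ep,(d,\tau)},\phi_{\ep,(d,\tau)})$ to $(L^\infty(\Omega_\ep))^2$; this is precisely the point where the general result of that appendix is needed (and where the argument of \cite{KP} had a gap). For the $C^1$-dependence on $(d,\tau)$, I would apply the implicit function theorem to $F(d,\tau,\psi,\phi) := (\psi,\phi) - T_{\ep,(d,\tau)}(\psi,\phi)$. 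The map $(d,\tau) \mapsto (\mu,\xi) = (\mu_\ep d,\mu_\ep d\tau)$ is smooth; the functions $(U_\mx,V_\mx)$, $(\Psi_\mx^l,\Phi_\mx^l)$ and $(P\Psi_\mx^l,P\Phi_\mx^l)$, the operators $\Pi_{\ep,(d,\tau)}$, and the error $\mce_{\ep,(d,\tau)}$ depend on $(d,\tau)$ in a $C^1$ fashion; and the $C^1$-dependence of $\mcp U_\mx$ — hence that of $\mbN_{\ep,(d,\tau)}$ and $L_{\ep,(d,\tau)}$ — is exactly what Propositions \ref{second approx} and \ref{second approx2} provide through the $C^1$-control of the remainder $R$ and of $\mca_{\ep,(d,\tau)}$. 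The one real subtlety is that the space $Z_{\ep,(d,\tau)}$ moves with $(d,\tau)$; this is handled in the standard way by composing with a $C^1$-family of isomorphisms $Z_{\ep,(d_0,\tau_0)} \to Z_{\ep,(d,\tau)}$ (equivalently, by working in the fixed ambient space $X_\ep$ and using that $(d,\tau) \mapsto \Pi_{\ep,(d,\tau)}$ is $C^1$). After this reduction $F$ is jointly $C^1$ and $\partial_{(\psi,\phi)}F = \textup{Id}-\partial_{(\psi,\phi)}T_{\ep,(d,\tau)}$ is invertible, since $\|\partial_{(\psi,\phi)}T_{\ep,(d,\tau)}\| = O(\rho^{\min\{p,2\}-1}+\rho^{\min\{q,2\}-1}) = o(1)$; the implicit function theorem then gives that $(d,\tau) \mapsto (\psi_{\ep,(d,\tau)},\phi_{\ep,(d,\tau)})$ is of class $C^1$.

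I expect the main obstacle to be the bookkeeping for the moving spaces $Z_{\ep,(d,\tau)}$ together with the differentiation of the \emph{ugly} second approximation $\mcp U_\mx$ — it is exactly for this purpose that the delicate $C^1$-estimates of Proposition \ref{second approx2} were established — whereas the contraction argument and the $L^\infty$-bootstrap, though the latter relies on the new linear regularity theory of Appendix \ref{sec bdd}, follow essentially known patterns (cf. \cite{KP, KM}).
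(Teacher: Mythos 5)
Your proposal follows the same path as the paper's (very terse) proof: existence and \eqref{nonlin est} via the Banach fixed-point theorem on a ball of radius $\sim \|\mce_{\ep,(d,\tau)}\|_{X_\ep}$, the $L^\infty$-bound via the iteration of Appendix \ref{sec bdd} built on Lemma \ref{lemma:reg}, and $C^1$-dependence on $(d,\tau)$ via the implicit function theorem; the contraction estimates you sketch (using $p\in(1,2)$, $q>2$, the third inequality in \eqref{ele ineq}, and the scale-invariant Sobolev bounds) are what the paper has in mind, cf.\ \cite{KP}. One detail to flag: the paper explicitly lists $(\psi_{\ep,(d,\tau)},\phi_{\ep,(d,\tau)})\in(L^\infty(\Omega_\ep))^2$ as an \emph{ingredient} of the $C^1$-argument (alongside the implicit function theorem and the Fredholm alternative)---this is exactly where \cite{KP} had the gap fixed by Appendix \ref{sec bdd}---whereas your write-up presents the $L^\infty$-bootstrap and the $C^1$-dependence as independent conclusions and asserts without verification that $F$ is ``jointly $C^1$''; making explicit where the $L^\infty$-bound is used to justify that joint differentiability would bring your argument fully in line with the paper.
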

\begin{proof}
By employing the Banach fixed-point theorem, we deduce the unique existence of the solution $(\psi_{\ep,(d,\tau)},\phi_{\ep,(d,\tau)})$ to \eqref{aux eq2} satisfying \eqref{nonlin est}.
The fact that $(\psi_{\ep,(d,\tau)},\phi_{\ep,(d,\tau)}) \in (L^{\infty}(\Omega_\ep))^2$ will be proved in Appendix \ref{sec bdd}.
The $C^1$-regularity of the map $(d,\tau) \mapsto (\psi_{\ep,(d,\tau)},\phi_{\ep,(d,\tau)})$ is the consequence of the implicit function theorem, the Fredholm alternative and $(\psi_{\ep,(d,\tau)},\phi_{\ep,(d,\tau)}) \in (L^{\infty}(\Omega_\ep))^2$.
\end{proof}

\subsection{Lyapunov-Schmidt reduction}
For $\ep > 0$ small, we set a $C^1$-functional $J_{0\ep}: \Lambda_\delta \to \R$ by
\[J_{0\ep}(d,\tau) = I_\ep\(\mcp U_\mx+\psi_{\ep,(d,\tau)}, PV_\mx+\phi_{\ep,(d,\tau)}\)\]
where $I_\ep$ is the energy functional in \eqref{ene} and $(\psi_{\ep,(d,\tau)},\phi_{\ep,(d,\tau)})$ is the solution to \eqref{aux eq2} described in Proposition \ref{nonlin prop}.
Let $\textup{Int}(\Lambda_\delta)$ be the interior of $\Lambda_\delta$.
\begin{proposition}\label{LSred}
Assume $N \ge 4$, $p \in (1,\frac{N-1}{N-2})$, and $\ep > 0$ small. Suppose that $(d,\tau) \in \textup{Int}(\Lambda_\delta)$ is a critical point of $J_{0\ep}$.
Then $(\mcp U_\mx+\psi_{\ep,(d,\tau)}, PV_\mx+\phi_{\ep,(d,\tau)})$ is a critical point of $I_\ep$ which belongs to $(C^2(\ovom_\ep))^2$, and hence a classical solution to \eqref{main system} having the form \eqref{eq main}.
Moreover, one may assume that its components are positive in $\Omega_\ep$.
\end{proposition}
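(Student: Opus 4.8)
The plan is to run the standard variational Lyapunov--Schmidt scheme: show first that an interior critical point of $J_{0\ep}$ forces the bifurcation equation to hold, then upgrade the regularity and the sign of the resulting solution. By Proposition \ref{nonlin prop} the pair $(\psi,\phi):=(\psi_{\ep,(d,\tau)},\phi_{\ep,(d,\tau)})\in Z_{\ep,(d,\tau)}$ solves the auxiliary equation \eqref{aux eq}, so the residual
\[
(\mcp U_\mx+\psi,PV_\mx+\phi)-\mci^*_\ep\bigl(|PV_\mx+\phi|^{p-1}(PV_\mx+\phi),\,|\mcp U_\mx+\psi|^{q-1}(\mcp U_\mx+\psi)\bigr)
\]
lies in $Y_{\ep,(d,\tau)}$, i.e.\ it equals $\sum_{l=0}^N c_{\ep,l}(P\Psi_\mx^l,P\Phi_\mx^l)$ for coefficients $c_{\ep,l}=c_{\ep,l}(d,\tau)$. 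I would then differentiate $J_{0\ep}(d,\tau)=I_\ep(\mcp U_\mx+\psi,PV_\mx+\phi)$ in $(d,\tau)$: using $(\psi,\phi)\in Z_{\ep,(d,\tau)}$, the $C^1$-dependence of $(\psi,\phi)$ on $(d,\tau)$ (Proposition \ref{nonlin prop}), and the defining relations \eqref{PPsPPhmxl eq} for $(P\Psi_\mx^l,P\Phi_\mx^l)$, the condition $\nabla_{(d,\tau)}J_{0\ep}(d,\tau)=0$ becomes a homogeneous linear system $\sum_m M_{\ep,lm}(d,\tau)\,c_{\ep,m}=0$ whose matrix $M_\ep(d,\tau)$ is, up to an error that is $o(1)$ as $\ep\to0$ uniformly on $\Lambda_\delta$, block-diagonal with entries proportional to $\int_{\R^N}\bigl(pV^{p-1}\Phi_{1,0}^l\Phi_{1,0}^m+qU^{q-1}\Psi_{1,0}^l\Psi_{1,0}^m\bigr)$ times the normalization factors in \eqref{PsPhmxl}. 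By the non-degeneracy of $(U_\mx,V_\mx)$ (Proposition \ref{nondeg}) this limiting matrix is invertible, so $M_\ep(d,\tau)$ is invertible for $\ep$ small, and hence $c_{\ep,l}(d,\tau)=0$ for all $l$. Therefore $(\mcp U_\mx+\psi,PV_\mx+\phi)$ is a critical point of $I_\ep$, hence a weak solution of \eqref{main system} of the form \eqref{eq main}; the last claim of Theorem \ref{thm main} follows since $\|\Delta\psi\|_{L^{\frac{p+1}{p}}(\Omega_\ep)}+\|\Delta\phi\|_{L^{\frac{q+1}{q}}(\Omega_\ep)}=\|(\psi,\phi)\|_{X_\ep}\le C\|\mce_{\ep,(d,\tau)}\|_{X_\ep}\to0$ by \eqref{nonlin est} and \eqref{error est}.

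Next I would address regularity and positivity of $(u,v):=(\mcp U_\mx+\psi,PV_\mx+\phi)$. Since $\mcp U_\mx$ and $PV_\mx$ solve \eqref{PUPV eq}--\eqref{mcp U eq} with bounded right-hand sides they are bounded on $\ovom_\ep$, and $(\psi,\phi)\in(L^\infty(\Omega_\ep))^2$ by Proposition \ref{nonlin prop} (see Appendix \ref{sec bdd}), so $(u,v)\in(L^\infty(\Omega_\ep))^2$. As $\pa\Omega_\ep=\pa\Omega\cup\pa B(0,\ep)$ is smooth, I would bootstrap: $|v|^{p-1}v,\,|u|^{q-1}u\in L^\infty(\Omega_\ep)$ gives $u,v\in W^{2,r}(\Omega_\ep)\hookrightarrow C^{1,\sigma}(\ovom_\ep)$ for all $r<\infty$ and some $\sigma\in(0,1)$ by $L^r$-elliptic estimates; since $p\ge1$ makes $t\mapsto|t|^{p-1}t$ locally H\"older continuous, the right-hand sides are then H\"older on $\ovom_\ep$, and Schauder estimates up to the smooth boundary yield $(u,v)\in(C^2(\ovom_\ep))^2$. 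For positivity I would repeat the construction of Sections \ref{sec app}--\ref{sec LSred} with $|v|^{p-1}v$, $|u|^{q-1}u$ replaced by $(v^+)^p$, $(u^+)^q$; these coincide with the original nonlinearities in a neighbourhood of the approximate solution $(\mcp U_\mx,PV_\mx)$, so every estimate goes through verbatim and one obtains a classical solution $(u,v)$ of the modified system. For it, $-\Delta v=(u^+)^q\ge0$ in $\Omega_\ep$ with $v=0$ on $\pa\Omega_\ep$, so $v\ge0$; hence $-\Delta u=(v^+)^p=v^p\ge0$ with $u=0$ on $\pa\Omega_\ep$, so $u\ge0$, and $(u,v)$ thus solves \eqref{main system}. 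Finally $(u,v)\not\equiv(0,0)$, because $I_\ep(u,v)=J_{0\ep}(d,\tau)\to\tfrac2N\int_{\R^N}U^{q+1}>0$ by Proposition \ref{C0 est} and \eqref{nonlin est}, so the strong maximum principle (and Hopf's lemma) force $u>0$ and $v>0$ in $\Omega_\ep$.

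The main obstacle specific to this proposition is the first step: showing that the bifurcation matrix $M_\ep(d,\tau)$ is invertible uniformly in $\ep>0$ and $(d,\tau)\in\Lambda_\delta$. This requires combining the non-degeneracy of Proposition \ref{nondeg} with the $C^1$-control of the ansatz furnished by Corollary \ref{rem C1} and Propositions \ref{second approx2} and \ref{C1 est}, so that the off-diagonal and remainder contributions to $M_\ep$ are genuinely $o(1)$ and do not spoil invertibility; the $C^1$-differentiability of the reduced map (Proposition \ref{nonlin prop}) is also used here in an essential way. By contrast, the regularity bootstrap and the positivity argument are routine once the $L^\infty$-bound of Appendix \ref{sec bdd} and the energy expansion of Proposition \ref{C0 est} are in hand.
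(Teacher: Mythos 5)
Your overall scheme — auxiliary equation forces the residual into $Y_{\ep,(d,\tau)}$, the critical-point condition $\nabla_{(d,\tau)}J_{0\ep}=0$ kills the coefficients $c_{\ep,l}$ via an invertible bifurcation matrix, then bootstrap regularity and get positivity by running the construction with the truncated nonlinearities $(v^+)^p$, $(u^+)^q$ and applying the maximum principle and energy lower bound — is the standard Lyapunov--Schmidt argument, and it is essentially the route the paper follows (the paper simply delegates most of it to [KP, Proposition~4.7]).

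There is, however, one point where your description is imprecise and where the paper introduces a specific device you did not mention. You assert that the bifurcation matrix $M_\ep(d,\tau)$ is, up to $o(1)$, block-diagonal in the $(d,\tau)$-variables. This is not quite right: since $\mu=\mu_\ep d$ and $\xi=\mu\tau=\mu_\ep d\tau$, one has $\nabla_d\xi=\mu_\ep\tau\neq0$, so the $d$-derivative of the ansatz excites not only the dilation mode $(\Psi_\mx^0,\Phi_\mx^0)$ but also all translation modes $(\Psi_\mx^l,\Phi_\mx^l)$ with weights $\tau_l$. The leading-order matrix is therefore upper-triangular (with nonzero diagonal, by Proposition~\ref{nondeg}) rather than block-diagonal. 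It is still invertible, so your conclusion $c_{\ep,l}=0$ survives, but the stated structure is wrong. The paper sidesteps exactly this mixing by changing variables to $(d,\tau')$ with $\tau'=d\tau$, so that $\xi=\mu_\ep\tau'$ is decoupled from $d$; after checking $\nabla J_{0\ep}(d,\tau)=0\Leftrightarrow\nabla\wtj_{0\ep}(d,\tau')=0$, the leading bifurcation matrix becomes genuinely diagonal and the argument of [KP] applies verbatim. Your direct computation in $(d,\tau)$-coordinates also works, provided you replace ``block-diagonal'' by ``upper-triangular with nonzero diagonal.'' The regularity bootstrap (using $p>1$ so that $t\mapsto|t|^{p-1}t$ is $C^1$, plus the $L^\infty$ bound of Appendix~\ref{sec bdd}) and the positivity/nontriviality argument you give are correct and match what is implicitly invoked through the reference to [KP].
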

\begin{proof}
For $(d,\tau) \in \textup{Int}(\Lambda_\delta)$, we set $\tau' = d\tau$ so that $\xi = \mu_\ep \tau'$.
Regarding $d$ and $\tau'$ as independent variables (so that $\tau$ is a dependent variable), we define $\wtj_{0\ep}(d,\tau') = J_{0\ep}(d,\tau)$. It is plain to verify that
\[\nabla J_{0\ep}(d,\tau) = 0 \Leftrightarrow \nabla \wtj_{0\ep}(d,\tau') = 0.\]
By slightly modifying the proof of \cite[Proposition 4.7]{KP}, we see that if $\nabla \wtj_{0\ep}(d,\tau') = 0$,
then $(\mcp U_{\mu_\ep d, \mu_\ep \tau'} +\psi_{\ep,(d,d^{-1}\tau')}, PV_{\mu_\ep d, \mu_\ep \tau'}+\phi_{\ep,(d,d^{-1}\tau')})
= (\mcp U_\mx+\psi_{\ep,(d,\tau)}, PV_\mx+\phi_{\ep,(d,\tau)})$ is a critical point of $I_\ep$.

The rest of the proof also goes along the same lines of the proof of \cite[Proposition 4.7]{KP}.
\end{proof}

The relationship between $J_{0\ep}$ and $J_\ep$ in \eqref{red ene} are given in the following lemma.
\begin{lemma}\label{J0ep Jep}
Assume $N \ge 4$, $p \in (1,\frac{N-1}{N-2})$, and $\ep > 0$ small. Let
\begin{equation}\label{c0c1c2}
c_0 = \frac{2}{N}\int_{\R^N}U^{q+1},\ c_1 = \frac{1}{p+1}\(\frac{b_{N,p}}{\ga_N}\)^p\wth_0(0) \int_{\R^N}U^q > 0,\ c_2 = \ga_N^{-1} > 0,
\end{equation}
and
\begin{equation}\label{theta}
\Theta(d,\tau) = c_1d^{(N-2)p-2} + c_2d^{2-N}U(\tau)V(\tau) \quad \text{for } (d,\tau) \in \Lambda_\delta.
\end{equation}
Then
\begin{equation}\label{J0ep Jep1}
J_{0\ep}(d,\tau) = J_{\ep}(d,\tau) + o(1)\mu_\ep^{(N-2)p-2} = c_0 + \mu_\ep^{(N-2)p-2} \Theta(d,\tau) + o(1)\mu_\ep^{(N-2)p-2}
\end{equation}
and
\begin{equation}\label{J0ep Jep2}
\nabla_{(d,\tau)} J_{0\ep}(d,\tau) = \nabla_{(d,\tau)}J_{\ep}(d,\tau) + o(1)\mu_\ep^{(N-2)p-2} = \mu_\ep^{(N-2)p-2} \nabla_{(d,\tau)} \Theta(d,\tau) + o(1)\mu_\ep^{(N-2)p-2}
\end{equation}
where $C_\ep > 0$ in \eqref{o(1)} is chosen uniformly for $(d,\tau)\in \Lambda_\delta$.
\end{lemma}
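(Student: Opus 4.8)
The plan is to reduce the statement to the first equalities in \eqref{J0ep Jep1} and \eqref{J0ep Jep2}, namely $J_{0\ep}=J_\ep+o(1)\mu_\ep^{(N-2)p-2}$ and $\nabla_{(d,\tau)}J_{0\ep}=\nabla_{(d,\tau)}J_\ep+o(1)\mu_\ep^{(N-2)p-2}$, since the second equalities are Propositions \ref{C0 est} and \ref{C1 est} rewritten via the constants \eqref{c0c1c2} and the function \eqref{theta}. Write $W_0=(\mcp U_\mx,PV_\mx)$ and $w=(\psi_{\ep,(d,\tau)},\phi_{\ep,(d,\tau)})$, so that $\|w\|_{X_\ep}\le C\|\mce_{\ep,(d,\tau)}\|_{X_\ep}$ by \eqref{nonlin est}. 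Two scaling facts, uniform for $(d,\tau)\in\Lambda_\delta$, are used throughout. First, $\|\mce_{\ep,(d,\tau)}\|_{X_\ep}^2=o(1)\mu_\ep^{(N-2)p-2}$: squaring \eqref{error est}, the term $\mu^{2((N-2)p-2)}|\ln\mu|^{2q/(q+1)}$ is $o(1)\mu_\ep^{(N-2)p-2}$ because $\mu\asymp\mu_\ep$ and $(N-2)p-2>0$ (which holds since $N\ge4$, $p>1$), while by \eqref{simple rel} the term $(\ep/\mu)^{2Nq/(q+1)}$ equals $O(1)(\mu_\ep^{(N-2)p-2})^{2Nq/((N-2)(q+1))}$ with exponent $>1$, hence is again $o(1)\mu_\ep^{(N-2)p-2}$. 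Second, writing $s$ for any of the coordinates $d,\tau_1,\dots,\tau_N$, one has $\|\partial_s\mce_{\ep,(d,\tau)}\|_{X_\ep}=O(1)|\ln\ep|\,\|\mce_{\ep,(d,\tau)}\|_{X_\ep}$: since $\mce_{\ep,(d,\tau)}=\mci^*_\ep(0,U_\mx^q-(\mcp U_\mx)^q)$ by \eqref{PUPV eq}, \eqref{mcp U eq} and \eqref{error}, one differentiates, expands through \eqref{mcp U}, and estimates each resulting term as in the proof of \eqref{error est} but now using the $C^1$-bounds of Corollary \ref{rem C1} and Proposition \ref{second approx2} together with $|\partial_sU_\mx|\le C(U_\mx+|x-\xi|\,|\nabla U_\mx|)$ in place of the $C^0$-bounds; in particular $\|\mce_{\ep,(d,\tau)}\|_{X_\ep}\|\partial_s\mce_{\ep,(d,\tau)}\|_{X_\ep}=o(1)\mu_\ep^{(N-2)p-2}$.

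For the $C^0$ estimate I would Taylor-expand the $C^2$ functional $I_\ep$ at $W_0$: for some $t_\ep\in(0,1)$,
\[J_{0\ep}-J_\ep=DI_\ep(W_0)[w]+\tfrac12D^2I_\ep(W_0+t_\ep w)[w,w].\]
By construction the first component of $\mce_{\ep,(d,\tau)}$ vanishes (so $\mcp U_\mx$ solves \eqref{mcp U eq}) and $-\Delta PV_\mx=U_\mx^q$; integrating by parts thus gives $DI_\ep(W_0)[\psi,\phi]=\int_{\Omega_\ep}(U_\mx^q-(\mcp U_\mx)^q)\psi$, so by the Sobolev embedding $X_\ep\hookrightarrow L^{q+1}(\Omega_\ep)\times L^{p+1}(\Omega_\ep)$ and $\|U_\mx^q-(\mcp U_\mx)^q\|_{L^{(q+1)/q}(\Omega_\ep)}=\|\mce_{\ep,(d,\tau)}\|_{X_\ep}$ one has $|DI_\ep(W_0)[w]|\le C\|\mce\|_{X_\ep}\|w\|_{X_\ep}\le C\|\mce\|_{X_\ep}^2$. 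The quadratic remainder is $O(1)\|w\|_{X_\ep}^2$, its bilinear gradient part being controlled by the H\"older pair $(p^\ast,q^\ast)$ of \eqref{pq star} and its nonlinear parts by the exponents $p+1$, $q+1$ together with $p,q>1$. Hence $J_{0\ep}=J_\ep+O(1)\|\mce_{\ep,(d,\tau)}\|_{X_\ep}^2=J_\ep+o(1)\mu_\ep^{(N-2)p-2}$, and \eqref{J0ep Jep1} follows from Proposition \ref{C0 est}.

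For the $C^1$ estimate, which is the core of the lemma, I would start from the integral identity $J_{0\ep}-J_\ep=\int_0^1DI_\ep(W_0+tw)[w]\,dt$ and differentiate in a coordinate $s\in\{d,\tau_1,\dots,\tau_N\}$ (the map $(d,\tau)\mapsto w$ is $C^1$ by Proposition \ref{nonlin prop}), obtaining
\[\partial_s(J_{0\ep}-J_\ep)=\int_0^1\!\Big(D^2I_\ep(W_0+tw)[\partial_sW_0,w]+t\,D^2I_\ep(W_0+tw)[\partial_sw,w]+DI_\ep(W_0+tw)[\partial_sw]\Big)dt,\]
and I would show each of the three groups is $O(1)\big(\|\mce\|_{X_\ep}^2+\|\mce\|_{X_\ep}\|\partial_s\mce\|_{X_\ep}\big)$. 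For the first group one inserts $-\Delta\partial_s\mcp U_\mx=\partial_s(PV_\mx)^p$ and $-\Delta\partial_sPV_\mx=\partial_sU_\mx^q$, integrates by parts, and recognizes $D^2I_\ep(W_0+tw)[\partial_sW_0,w]$ as a genuinely quadratic functional of $w$: a sum of integrals of $(\text{smooth weight})\cdot\partial_s(\text{bubble or its correction in }\eqref{mcp U})\cdot\big[\text{second-order difference of }|\cdot|^{p-1}\text{ or }|\cdot|^{q-1}\big]\cdot w$; using $|\partial_sPV_\mx|\le C\,PV_\mx$, $|\partial_s\mcp U_\mx|\le C\,\mcp U_\mx$ up to the corrections, distinguishing in each integral the regions where $|w|$ is or is not dominated by the corresponding bubble, and then applying H\"older's inequality with the exponents $p+1$, $q+1$, one bounds this group by $C\big(\|w\|_{X_\ep}^2+\|w\|_{X_\ep}\|\partial_s\mce\|_{X_\ep}\big)$. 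The second group is bilinear, hence $\le C\|\partial_sw\|_{X_\ep}\|w\|_{X_\ep}$, and differentiating the fixed-point identity \eqref{aux eq2} (absorbing the derivative of the superlinear $\mbN_{\ep,(d,\tau)}$) gives $\|\partial_sw\|_{X_\ep}\le C(\|\mce\|_{X_\ep}+\|\partial_s\mce\|_{X_\ep})$. For the third group one expands $DI_\ep(W_0+tw)=DI_\ep(W_0)+\int_0^tD^2I_\ep(W_0+rw)[w,\,\cdot\,]\,dr$: the remainder is again bilinear, while $DI_\ep(W_0)[\partial_sw]=\int_{\Omega_\ep}(U_\mx^q-(\mcp U_\mx)^q)(\partial_s\psi)$ is $\le C\|\mce\|_{X_\ep}\|\partial_sw\|_{X_\ep}$. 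Summing, $\partial_s(J_{0\ep}-J_\ep)=o(1)\mu_\ep^{(N-2)p-2}$, and \eqref{J0ep Jep2} follows from Proposition \ref{C1 est}.

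I expect the main obstacle to be precisely the first group in the $C^1$ computation: crude estimates of $D^2I_\ep(W_0+tw)[\partial_sW_0,w]$ only give $O(\|w\|_{X_\ep}^{\min(p,q,2)})$, which need \emph{not} be $o(1)\mu_\ep^{(N-2)p-2}$ when $(\ep/\mu)^{Nq/(q+1)}$ dominates $\|\mce_{\ep,(d,\tau)}\|_{X_\ep}$ (as happens for large $N$), so one is forced to extract a genuinely quadratic control through the region-splitting above, which interacts delicately with the slow decay of the bubble $U_\mx$ and with the structure of the correction terms in \eqref{R} and \eqref{mcp U}. A companion technical point is the second scaling fact — that differentiating in $(d,\tau)$ does not enlarge $\|\mce_{\ep,(d,\tau)}\|_{X_\ep}$ beyond a logarithmic factor — whose verification rests essentially on the $C^1$-estimates of Section \ref{sec app}, i.e.\ Corollary \ref{rem C1} and Proposition \ref{second approx2}.
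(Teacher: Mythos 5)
Your $C^0$ estimate is essentially the paper's argument: Taylor-expand $I_\ep$ around $W_0=(\mcp U_\mx,PV_\mx)$, observe via integration by parts that the linear piece is $\int_{\Omega_\ep}(U_\mx^q-(\mcp U_\mx)^q)\psi$, and absorb everything into $O(\|\mce_{\ep,(d,\tau)}\|_{X_\ep}^2)$; this is compatible with the paper, which records the stronger statement \eqref{mce q*} with exponent $q^\ast\in(1,2)$. Your decomposition of $\partial_s(J_{0\ep}-J_\ep)$ also collapses, after summing the second and third groups, to exactly the paper's $\textup{(VI)}+\textup{(VII)}$, and your handling of $\textup{(VI)}$ (the cross-term is $-\tfrac{1}{q}\int\partial_s(U_\mx^q-(\mcp U_\mx)^q)\,\psi$, hence controlled by $\|\partial_s\mce_{\ep,(d,\tau)}\|_{X_\ep}\|\psi\|_{L^{q+1}}$) is morally equivalent, via H\"older, to the paper's bound through $\|\nabla_s(\mcp U_\mx-U_\mx)\|_{L^{q+1}(\Omega_\ep)}$ and \eqref{mcp U-U}.

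Where you genuinely diverge from the paper, and where the gap is, is $\textup{(VII)}=I'_\ep(W_0+w)[\partial_sw]$. You bound it through a claimed estimate $\|\partial_sw\|_{X_\ep}\le C(\|\mce_{\ep,(d,\tau)}\|_{X_\ep}+\|\partial_s\mce_{\ep,(d,\tau)}\|_{X_\ep})$, said to follow by "differentiating the fixed-point identity \eqref{aux eq2}". This is not a formality: differentiating \eqref{aux eq2} in $s$ produces contributions from $\partial_s\Pi_{\ep,(d,\tau)}$, from $\partial_sL^{-1}_{\ep,(d,\tau)}$, and from the fact that the constraint space $Z_{\ep,(d,\tau)}$ itself moves with $(d,\tau)$ (so $\partial_sw\notin Z_{\ep,(d,\tau)}$ in general, picking up a $Y_{\ep,(d,\tau)}$-component determined by $\partial_s\int(pV_\mx^{p-1}\Phi_\mx^l\phi+qU_\mx^{q-1}\Psi_\mx^l\psi)=0$). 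One must verify that the operator norms of $\partial_s\Pi_{\ep,(d,\tau)}$ and $\partial_sL^{-1}_{\ep,(d,\tau)}$ are bounded uniformly in $\ep$ and $(d,\tau)$; this is plausible in the $(d,\tau)$-parametrization, since $\mu=\mu_\ep d$ absorbs the naive $\mu^{-1}$ coming from $\partial_\mu\Psi_\mx^l,\partial_\mu\Phi_\mx^l$, but it is a substantive argument that neither your sketch nor the paper actually carries out. The companion claim $\|\partial_s\mce_{\ep,(d,\tau)}\|_{X_\ep}=O(|\ln\ep|)\|\mce_{\ep,(d,\tau)}\|_{X_\ep}$ is also only gestured at.

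The paper sidesteps all of this. Because $w\in Z_{\ep,(d,\tau)}$ and \eqref{aux eq} puts the residual $(\textup{Id}_{X_\ep}-\Pi_{\ep,(d,\tau)})[\cdots]=0$, one writes $\textup{(VII)}=\sum_l\tc_{\ep,l}\int_{\Omega_\ep}\bigl(pV_\mx^{p-1}\Phi_\mx^l\,\partial_s\phi+qU_\mx^{q-1}\Psi_\mx^l\,\partial_s\psi\bigr)$, then differentiates the identity $\int_{\Omega_\ep}\bigl(pV_\mx^{p-1}\Phi_\mx^l\phi+qU_\mx^{q-1}\Psi_\mx^l\psi\bigr)=0$ in $s$ to move $\partial_s$ off $w$, obtaining \eqref{IV eq}, and finally estimates the Lagrange multipliers $\tc_{\ep,l}$ directly by testing \eqref{aux eq} against $(P\Psi_\mx^m,P\Phi_\mx^m)$, i.e.\ \eqref{Am1}--\eqref{tc epl}. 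This requires no bound on $\|\partial_sw\|_{X_\ep}$ or on $\|\partial_s\mce_{\ep,(d,\tau)}\|_{X_\ep}$ at all. If you wish to pursue your route, you should prove the $s$-differentiability and uniform bounds for $\Pi_{\ep,(d,\tau)}$ and $L^{-1}_{\ep,(d,\tau)}$ and derive $\|\partial_sw\|_{X_\ep}\lesssim\|\mce_{\ep,(d,\tau)}\|_{X_\ep}+\|\partial_s\mce_{\ep,(d,\tau)}\|_{X_\ep}$ explicitly; otherwise the paper's Lagrange-multiplier device is the more economical way to close $\textup{(VII)}$.
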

\begin{proof}
Let $q^* \in (1,2)$ be the number in \eqref{pq star}. Estimate \eqref{error est} and the inequality that $Nqq^* > (N-2)(q+1)$ imply
\begin{equation}\label{mce q*}
\|\mce_{\ep,(d,\tau)}\|_{X_\ep}^{q^*} = o(1)\mu_\ep^{(N-2)p-2}.
\end{equation}
Employing \eqref{mce q*}, one can check the first inequality in \eqref{J0ep Jep1} as in the proof of \cite[Lemma 5.4]{KP}.

\medskip
We verify the first equality in \eqref{J0ep Jep2}. Let $s$ be one of the parameters $d,\tau_1,\ldots,\tau_N$. We have
\begin{align*}
\nabla_s(J_{0\ep}-J_{\ep})(d,\tau) &= \left[I'_\ep\(\mcp U_\mx+\psi_{\ep,(d,\tau)}, PV_\mx+\phi_{\ep,(d,\tau)}\) - I'_\ep\(\mcp U_\mx, PV_\mx\)\right](\nabla_s\mcp U_\mx,\nabla_sPV_\mx) \\
&\ + I'_\ep\(\mcp U_\mx+\psi_{\ep,(d,\tau)}, PV_\mx+\phi_{\ep,(d,\tau)}\)(\nabla_s\psi_{\ep,(d,\tau)},\nabla_s\phi_{\ep,(d,\tau)})\\
&=: \textup{(VI)} + \textup{(VII)}
\end{align*}

By differentiating \eqref{PUPV eq} and \eqref{mcp U eq} with respect to $s$, we get the equation of $(\nabla_s\mcp U_\mx,\nabla_sPV_\mx)$.
Using it, the third equality in \eqref{ele ineq}, Lemma \ref{nsPUPV est}, H\"older's inequality, the Sobolev inequality, the inequalities that $0 \le \mcp U_\mx \le U_\mx$
and $0 \le PV_\mx \le V_\mx$ in $\Omega_\ep$, \eqref{nonlin est} and \eqref{mce q*}, we compute
\begin{align*}
\textup{(VI)} &= -\int_{\Omega_\ep} \left[|\mcp U_\mx+\psi_{\ep,(d,\tau)}|^{q-1}\(\mcp U_\mx+\psi_{\ep,(d,\tau)}\) - (\mcp U_\mx)^q - q(\mcp U_\mx)^{q-1}\psi_{\ep,(d,\tau)}\right] \nabla_s\mcp U_\mx \\
&\ -\int_{\Omega_\ep} \left[|PV_\mx+\phi_{\ep,(d,\tau)}|^{p-1}\(PV_\mx+\phi_{\ep,(d,\tau)}\) - (PV_\mx)^p - p(PV_\mx)^{p-1}\phi_{\ep,(d,\tau)}\right] \nabla_sPV_\mx \\
&\ -q\int_{\Omega_\ep} \left[(\mcp U_\mx)^{q-1} \nabla_s(\mcp U_\mx-U_\mx)+\nabla_s U_\mx \left\{(\mcp U_\mx)^{q-1}-U_\mx^{q-1}\right\}\right] \psi_{\ep,(d,\tau)} \\
&= O(1) \int_{\Omega_\ep} \left[(\mcp U_\mx)^{q-2} \psi_{\ep,(d,\tau)}^2 + |\psi_{\ep,(d,\tau)}|^q \mone_{q > 2}\right] \left|\nabla_s\mcp U_\mx\right| + (PV_\mx)^{p-2} \phi_{\ep,(d,\tau)}^2 \left|\nabla_sPV_\mx\right| \\
&\ + O(1) \int_{\Omega_\ep} \left[(\mcp U_\mx)^{q-1} \left|\nabla_s(\mcp U_\mx-U_\mx)\right|+U_\mx^{q-1}|\mcp U_\mx-U_\mx|\right] \left|\psi_{\ep,(d,\tau)}\right| \\
&= O(1) \left[\left\|\(\psi_{\ep,(d,\tau)},\phi_{\ep,(d,\tau)}\)\right\|_{X_\ep}^2 \right. \\
&\hspace{35pt} \left. + \(\|\nabla_s(\mcp U_\mx-U_\mx)\|_{L^{q+1}(\Omega_\ep)}+\|\mcp U_\mx-U_\mx\|_{L^{q+1}(\Omega_\ep)}\) \|\psi_{\ep,(d,\tau)}\|_{L^{q+1}(\Omega_\ep)}\right] \\
&= o(1)\mu_\ep^{(N-2)p-2} + O(1) \left[\|\nabla_s(\mcp U_\mx-U_\mx)\|_{L^{q+1}(\Omega_\ep)} + \|\mcp U_\mx-U_\mx\|_{L^{q+1}(\Omega_\ep)}\right] \|\mce_{\ep,(d,\tau)}\|_{X_\ep}.
\end{align*}
Besides, \eqref{mcp U}, \eqref{R C1} and \eqref{mca est C1} yield
\begin{equation}\label{mcp U-U}
\|\nabla_s(\mcp U_\mx-U_\mx)\|_{L^{q+1}(\Omega_\ep)} +\| \mcp U_\mx-U_\mx \|_{L^{q+1}(\Omega_\ep)}= O(1)\left[\mu^{\frac{Np}{q+1}} + \(\frac{\ep}{\mu}\)^{\frac{N}{q+1}}\right].
\end{equation}
By \eqref{simple rel} and \eqref{error est},
\[(\|\nabla_s(\mcp U_\mx-U_\mx)\|_{L^{q+1}(\Omega_\ep)}+\| \mcp U_\mx-U_\mx \|_{L^{q+1}(\Omega_\ep)}) \|\mce_{\ep,(d,\tau)}\|_{X_\ep} = o(1)\mu_\ep^{(N-2)p-2}.\]
Thus it holds that $\textup{(VI)} = o(1)\mu_\ep^{(N-2)p-2}$.

Moreover, by using \eqref{aux eq} and the fact that $(\psi_{\ep,(d,\tau)},\phi_{\ep,(d,\tau)}) \in Z_{\ep,(d,\tau)}$, we find
\begin{equation}\label{IV eq}
\begin{aligned}
\textup{(VII)} &= \sum_{l=0}^N \tc_{\ep,l} \int_{\Omega_\ep} \(p V_\mx^{p-1} \Phi_\mx^l \nabla_s\phi_{\ep,(d,\tau)} + q U_\mx^{q-1} \Psi_\mx^l \nabla_s\psi_{\ep,(d,\tau)}\) \\
&= - \sum_{l=0}^N \tc_{\ep,l} \int_{\Omega_\ep} \left[\nabla_s\(p V_\mx^{p-1} \Phi_\mx^l\) \phi_{\ep,(d,\tau)} + \nabla_s\(q U_\mx^{q-1} \Psi_\mx^l\) \psi_{\ep,(d,\tau)}\right]
\end{aligned}
\end{equation}
where the coefficients $\tc_{\ep,l}$'s are defined by the relation
\begin{multline*}
\sum_{l=0}^N \tc_{\ep,l} \(P\Psi_\mx^l,P\Phi_\mx^l\) = \(\mcp U_\mx+\psi_{\ep,(d,\tau)}, PV_\mx+\phi_{\ep,(d,\tau)}\) \\
- \mci^*_\ep\(|PV_\mx+\phi_{\ep,(d,\tau)}|^{p-1} \(PV_\mx+\phi_{\ep,(d,\tau)}\),|\mcp U_\mx+\psi_{\ep,(d,\tau)}|^{q-1} \(\mcp U_\mx+\psi_{\ep,(d,\tau)}\)\);
\end{multline*}
see \eqref{Pi}. Since
\begin{align*}
p\int_{\Omega_\ep} V_\mx^{p-1} \Phi_\mx^m PV_\mx &= \int_{\Omega_\ep} \nabla P\Psi_\mx^m \cdot \nabla PV_\mx = \int_{\Omega_\ep} P\Psi_\mx^m U_\mx^q &(\textup{by } \eqref{PPsPPhmxl eq} \text{ and } \eqref{PUPV eq}), \\
q\int_{\Omega_\ep} U_\mx^{q-1} \Psi_\mx^m \mcp U_\mx &= \int_{\Omega_\ep} \nabla P\Phi_\mx^m \cdot \nabla \mcp U_\mx = \int_{\Omega_\ep} P\Phi_\mx^m (PV_\mx)^p &(\textup{by } \eqref{PPsPPhmxl eq} \text{ and } \eqref{mcp U eq}),
\end{align*}
if we define a number
\begin{align*}
M_{\ep,(d,\tau)}^m &= \int_{\Omega_\ep} \left[pV_\mx^{p-1} \Phi_\mx^m PV_\mx - |PV_\mx+\phi_{\ep,(d,\tau)}|^{p-1} P\Phi_\mx^m \(PV_\mx+\phi_{\ep,(d,\tau)}\)\right] \\
&\ + \int_{\Omega_\ep} \left[qU_\mx^{q-1} \Psi_\mx^m \mcp U_\mx - |\mcp U_\mx+\psi_{\ep,(d,\tau)}|^{q-1} P\Psi_\mx^m \(\mcp U_\mx+\psi_{\ep,(d,\tau)}\)\right]
\end{align*}
for $m = 0,\ldots,N$, then
\begin{equation}\label{Am1}
\begin{aligned}
M_{\ep,(d,\tau)}^m &= - \int_{\Omega_\ep} \left[|PV_\mx+\phi_{\ep,(d,\tau)}|^{p-1} \(PV_\mx+\phi_{\ep,(d,\tau)}\)-(PV_\mx)^p\right] P\Phi_\mx^m \\
&\ - \int_{\Omega_\ep} \left[|\mcp U_\mx+\psi_{\ep,(d,\tau)}|^{q-1} \(\mcp U_\mx+\psi_{\ep,(d,\tau)}\)-(\mcp U_\mx)^q\right] P\Psi_\mx^m \\
&\ - \int_{\Omega_\ep} \left[(\mcp U_\mx)^q-U_\mx^q\right] P\Psi_\mx^m.
\end{aligned}
\end{equation}
By \eqref{Am1}, \eqref{error est}, \eqref{nonlin est}, \eqref{mcp U-U}, the inequalities that $|P\Psi_\mx^m| \le C\mu^{-1} U_\mx$ and $|P\Phi_\mx^m| \le C\mu^{-1} V_\mx$ in $\Omega_\ep$, it follows that
\begin{equation}\label{Am2}
\begin{aligned}
M_{\ep,(d,\tau)}^m &= O(1) \left\|P\Phi_\mx^m\right\|_{L^{p+1}(\Omega_\ep)} \left\|\phi_{\ep,(d,\tau)}\right\|_{L^{p+1}(\Omega_\ep)} \\
&\ + O(1) \left\|P\Psi_\mx^m\right\|_{L^{q+1}(\Omega_\ep)} \(\left\|\psi_{\ep,(d,\tau)}\right\|_{L^{q+1}(\Omega_\ep)} + \left\|\mcp U_\mx-U_\mx\right\|_{L^{q+1}(\Omega_\ep)}\) \\
&= O(1)\mu^{-1}\left[\mu^{\frac{Np}{q+1}} + \(\frac{\ep}{\mu}\)^{\frac{N}{q+1}}\right].
\end{aligned}
\end{equation}
Testing \eqref{aux eq} with $(P\Psi_\mx^m,P\Phi_\mx^m)$ for each $m = 0,\ldots,N$ and applying $(\psi_{\ep,(d,\tau)},\phi_{\ep,(d,\tau)}) \in Z_{\ep,(d,\tau)}$ and \eqref{Am2}, we deduce
\begin{equation}\label{tc epl}
\begin{aligned}
\tc_{\ep,m} &= \mu^2 \sum_{l=0}^N (\mone_{m=l} + o(1)) M_{\ep,(d,\tau)}^m \left[\int_{\R^N} \(pV^{p-1}(\Phi_{1,0}^m)^2 + qU^{q-1}(\Psi_{1,0}^m)^2\)\right]^{-1}\\
&= O(1)\mu\left[\mu^{\frac{Np}{q+1}} + \(\frac{\ep}{\mu}\)^{\frac{N}{q+1}}\right].
\end{aligned}
\end{equation}
By combining \eqref{IV eq}, \eqref{tc epl}, \eqref{error est} and
\begin{multline*}
\int_{\Omega_\ep} \left[\nabla_s\(p V_\mx^{p-1} \Phi_\mx^l\) \phi_{\ep,(d,\tau)} + \nabla_s\(q U_\mx^{q-1} \Psi_\mx^l\) \psi_{\ep,(d,\tau)}\right]\dx\\
= O(1) \(\big\|\Phi_\mx^l\big\|_{L^{p+1}(\Omega_\ep)} \|\phi_{\ep,(d,\tau)}\|_{L^{p+1}(\Omega_\ep)}
+ \big\| \Psi_\mx^l\big\|_{L^{q+1}(\Omega_\ep)} \|\psi_{\ep,(d,\tau)}\|_{L^{q+1}(\Omega_\ep)}\) = O(1)\mu^{-1} \|\mce_{\ep,(d,\tau)}\|_{X_\ep},
\end{multline*}
we arrive at $\textup{(VII)} = o(1)\mu_\ep^{(N-2)p-2}$.

Consequently, the first equality in \eqref{J0ep Jep2} is true.

\medskip
The second equalities in \eqref{J0ep Jep1} and \eqref{J0ep Jep2} follow from \eqref{ene exp} and \eqref{ene exp C1}, respectively.
\end{proof}

\section{Completion of the proof of Theorem \ref{thm main}}\label{sec comp}
We are now ready to conclude the proof of our main theorem.
Below, we show the function $\Theta$ in \eqref{theta} has a non-degenerate saddle point $(\td,\tta) \in \textup{Int}(\Lambda_\delta)$ with a suitable choice of $\delta > 0$.
Then Lemma \ref{J0ep Jep} and a simple degree argument guarantee the existence of a critical point of $J_{0\ep}$ in $\text{Int}(\Lambda_\delta)$ for $\ep > 0$ small.
Proposition \ref{LSred} implies the existence of a small number $\ep_0 > 0$ and a family of solutions $\{(u_\ep,v_\ep)\}_{\ep \in (0,\ep_0)}$ to system \eqref{main system} depicted in the statement of Theorem \ref{thm main}.

Let
\[\td = \left[\frac{c_2(N-2)V(0)}{c_1((N-2)p-2)}\right]^{\frac{1}{(N-2)p+N-4}} > 0 \quad \text{and} \quad \tta = 0 \in \R^N\]
where $c_1,\, c_2 > 0$ are the numbers in \eqref{c0c1c2}.
Then, using $U(0) = 1$ and $\nabla U(0) = \nabla V(0) = 0$, we easily check that $\nabla_{(d,\tau)} \Theta(\td,\tta) = 0$. Furthermore, it holds that
\[\pa^2_{d\tau_l}\Theta(\td,\tta) = 0 \quad \text{for each } l = 1,\ldots,N,\]
\begin{align*}
\pa^2_{dd}\Theta(\td,\tta) &= \td^{-N}\left[c_1((N-2)p-2)((N-2)p-3)\td^{(N-2)p+N-4} + c_2(N-2)(N-1)(UV)(0)\right] \\
&= \td^{-N} c_2(N-2)((N-2)p+N-4)V(0) > 0,
\end{align*}
and
\begin{align*}
\pa^2_{\tau_l\tau_m}\Theta(\td,\tta) &= c_2\td^{2-N} \left[(\pa^2_{x_lx_m}U)(0)V(0) + (\pa^2_{x_lx_m}V)(0)U(0)\right] \\
&= \begin{cases}
c_2\td^{2-N} \left[(\pa^2_{rr}U)(0)V(0) + (\pa^2_{rr}V)(0)\right] = - c_2N^{-1}\td^{2-N}\(V^{p+1}(0)+1\) < 0 &\text{if } l = m,\, \footnotemark{} \\
0 &\text{if } l \ne m
\end{cases}
\end{align*}%
\footnotetext{Let $r = |x|$. Abusing notation, we write $U(r) = U(x)$ and $V(r) = U(x)$. Then system \eqref{main system} is rewritten as
\[-(\pa^2_{rr}U)(r) - \frac{N-1}{r}(\pa_rU)(r) = V^p(r) \quad \text{and} \quad -(\pa^2_{rr}V)(r) - \frac{N-1}{r}(\pa_rV)(r) = U^q(r) \quad \text{for } r \in (0,\infty).\]
Taking $r \to 0$ on the both equations, we find $-N(\pa^2_{rr}U)(0) = V^p(0)$ and $-N(\pa^2_{rr}V)(0) = U^q(0) = 1$.}for
$l, m = 1,\ldots,N$. Consequently, the Hessian matrix $D^2_{(d,\tau)}\Theta(\td,\tta)$ of $\Theta$ at $(\td,\tta)$ has one positive eigenvalue and $N$ negative eigenvalues.
Taking $\delta = \td+\td^{-1}$, we see that $(\td,\tta)$ is a non-degenerate saddle point of $\Theta$ in $\textup{Int}(\Lambda_\delta)$. The proof of Theorem \ref{thm main} is now finished.

\appendix
\section{Technical computations}\label{sec tech}
An elementary calculus yields the following inequalities.
\begin{lemma}
Assume that $1 < p < 2$. Then there exists a constant $C > 0$ depending only on $p$ such that
\begin{equation}\label{ele ineq}
\begin{cases}
\left||a+b|^{p-1}(a+b)-|a|^{p-1}a\right| \le C\(|a|^{p-1}|b|+|b|^p\),\\
\left||a+b|^{p-1}(a+b)-|a|^{p-1}a-|b|^{p-1}b\right| \le C|a|^{p-1}|b|,\\
\left||a+b|^{p-1}(a+b) - |a|^{p-1}a - p|a|^{p-1}b\right| \le C\min\left\{|a|^{p-2}b^2,|b|^p\right\},\\
|a|\left||a+b|^{p-2}(a+b) - |a|^{p-2}a - (p-1)|a|^{p-2}b\right| \le C|b|^p
\end{cases}
\end{equation}
for any $a, b \in \R$.
\end{lemma}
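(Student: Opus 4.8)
The plan is to reduce all four estimates to elementary one-variable calculus applied to the maps $f(t):=|t|^{p-1}t$ and $g(t):=|t|^{p-2}t$. First I would record, for $1<p<2$, the facts: $f\in C^1(\R)\cap C^2(\R\setminus\{0\})$ with $f'(t)=p|t|^{p-1}$ and $|f''(t)|=p(p-1)|t|^{p-2}$ for $t\ne 0$; that $g$ is continuous on $\R$ and $g\in C^2(\R\setminus\{0\})$ with $g'(t)=(p-1)|t|^{p-2}$ and $|g''(t)|=(p-1)(2-p)|t|^{p-3}$ for $t\ne 0$; and the subadditivity $(s+t)^{p-1}\le s^{p-1}+t^{p-1}$ for $s,t\ge 0$, valid because $p-1\in(0,1)$. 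The first inequality is then immediate from the fundamental theorem of calculus: writing $f(a+b)-f(a)=b\int_0^1 f'(a+sb)\,\mathrm{d}s$ and bounding $|a+sb|^{p-1}\le(|a|+|b|)^{p-1}\le|a|^{p-1}+|b|^{p-1}$ gives $|f(a+b)-f(a)|\le p\bigl(|a|^{p-1}|b|+|b|^p\bigr)$. Every constant produced in this way depends only on $p$, as required.

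For the second inequality I would exploit the symmetry $F(a,b):=\bigl||a+b|^{p-1}(a+b)-|a|^{p-1}a-|b|^{p-1}b\bigr|=F(b,a)$ together with a dichotomy on $|a|$ versus $|b|$. If $|b|\le|a|$, the same calculus estimate yields $|f(a+b)-f(a)|\le p(|a|+|b|)^{p-1}|b|\le 2p\,|a|^{p-1}|b|$, while $|f(b)|=|b|^{p-1}|b|\le|a|^{p-1}|b|$ by monotonicity of $t\mapsto t^{p-1}$, so $F(a,b)\le(2p+1)|a|^{p-1}|b|$. If $|b|>|a|$, I invoke the case just handled with $a$ and $b$ interchanged to get $F(a,b)=F(b,a)\le C|b|^{p-1}|a|$, and since $|b|^{p-1}|a|=|a|^{p-1}|b|\,(|a|/|b|)^{2-p}\le|a|^{p-1}|b|$ (using $|a|<|b|$ and $2-p>0$), the claim follows.

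The last two inequalities require slightly more care because $f''$ and $g''$ are singular at the origin, so a naive Taylor expansion is not always licit; the plan is to split each according to whether $|b|\le|a|/2$ or $|b|>|a|/2$. In the first regime $|a+sb|\ge|a|/2$ for all $s\in[0,1]$, so the integral form of Taylor's theorem applies on the segment and, from the bounds on $f''$ and $g''$, it produces $\bigl||a+b|^{p-1}(a+b)-|a|^{p-1}a-p|a|^{p-1}b\bigr|\le C|a|^{p-2}b^2$ for the third inequality and $\bigl||a+b|^{p-2}(a+b)-|a|^{p-2}a-(p-1)|a|^{p-2}b\bigr|\le C|a|^{p-3}b^2$ for the fourth; since moreover $|a|^{p-2}b^2=|b|^p(|b|/|a|)^{2-p}\le|b|^p$ in this regime, the third bound is already $\le C\min\{|a|^{p-2}b^2,|b|^p\}$, and multiplying the fourth by $|a|$ gives $C|a|^{p-2}b^2\le C|b|^p$. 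In the complementary regime $|b|>|a|/2$ (which also absorbs the degenerate value $a=0$, trivial for the fourth inequality because of its $|a|$ prefactor) I would not differentiate at all: from $|a+b|\le 3|b|$ and $|a|\le 2|b|$ one bounds each individual term of the third expression by $C|b|^p$, and each term of the fourth — after multiplying by the prefactor $|a|$, which is precisely what turns the singular factor $|a|^{p-2}$ into $|a|^{p-1}$ — also by $C|b|^p$; finally the lower bounds $|a|^{p-2}b^2\gtrsim|b|^p$ and $|b|^p\ge|b|^p$ (using $p-2<0$ and $|a|\le 2|b|$) show that the right-hand side $\min\{|a|^{p-2}b^2,|b|^p\}$ of the third inequality is itself $\gtrsim|b|^p$, closing that case. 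The only genuine obstacle is this bookkeeping in the last two estimates: choosing the split point so that the Taylor remainder never meets the singularity of $f''$ or $g''$, and then verifying, regime by regime, that the resulting bound truly dominates the stated right-hand side — in particular the minimum appearing in the third inequality.
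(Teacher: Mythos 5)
Your proof is correct, and since the paper itself gives no argument beyond the remark ``An elementary calculus yields the following inequalities,'' your careful one-variable calculus (Taylor/mean-value on $f(t)=|t|^{p-1}t$ and $g(t)=|t|^{p-2}t$, with the dichotomy $|b|\le|a|/2$ vs.\ $|b|>|a|/2$ to avoid the singularity of the second derivatives) is precisely the kind of argument the authors have in mind. The regime-by-regime bookkeeping for the third and fourth estimates, and the use of symmetry of $F(a,b)$ in the second, all check out.
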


\begin{lemma}
Let $N \ge 3$, $a < 2$, $\lambda \in (0,1)$ and $\xi \in \R^N$. Then there exists a constant $C > 0$ depending only on $N$ and $a$ such that
\begin{equation}\label{interior es}
\int_{B(\xi,\lambda)} \frac{\dy}{|x-y|^{N-2}|y-\xi|^a} \le \frac{C\lambda^{2-a}}{1+|\lambda^{-1}(x-\xi)|^{N-2}} \quad \textup{for } x \in \R^N.
\end{equation}
\end{lemma}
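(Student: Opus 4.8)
The plan is to reduce \eqref{interior es} to a single scale‑invariant inequality by translation and dilation, and then to establish that inequality by a three–region decomposition of the unit ball on which, in each piece, one of the two singular factors of the integrand is harmless.

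First I would translate so that $\xi = 0$ and substitute $y = \lambda z$. Setting $w := \lambda^{-1}(x-\xi)$, the change of variables $dy = \lambda^N\, dz$ converts the left-hand side of \eqref{interior es} into $\lambda^{2-a}\int_{B(0,1)} |w-z|^{2-N}|z|^{-a}\, dz$, while the right-hand side of \eqref{interior es} is exactly $C\lambda^{2-a}(1+|w|^{N-2})^{-1}$. Hence it suffices to prove
\[
\int_{B(0,1)} \frac{dz}{|w-z|^{N-2}|z|^a} \le \frac{C}{1+|w|^{N-2}} \qquad \text{for all } w \in \R^N,
\]
with $C = C(N,a)$. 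I would record once and for all that, since $a < 2 \le N$, the function $|z|^{-a}$ is integrable on $B(0,1)$, and since $N-2 < N$, the function $|z|^{2-N}$ is integrable on every ball.

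For $|w| > 2$ the estimate is immediate: on $B(0,1)$ one has $|z| \le 1 < |w|/2$, hence $|w - z| \ge |w| - |z| \ge |w|/2$, so $|w-z|^{2-N} \le 2^{N-2}|w|^{2-N}$ and the integral is at most $2^{N-2}|w|^{2-N}\int_{B(0,1)}|z|^{-a}\,dz$; since $|w|^{N-2} > 1$, this is $\le C(1+|w|^{N-2})^{-1}$. For $|w| \le 2$ the target right-hand side is comparable to $1$, so only a uniform bound on the integral is needed. Here I would split $B(0,1)$ into $D_1 := B(0,|w|/2)$, where $|w-z| \ge |w|/2$ so that the contribution is $\le (|w|/2)^{2-N}\int_{B(0,|w|/2)}|z|^{-a}\,dz = C|w|^{2-a} \le C$; $D_2 := B(w,|w|/2)\cap B(0,1)$, where $|z| \ge |w|/2$ so that the contribution is $\le (|w|/2)^{-a}\int_{B(w,|w|/2)}|w-z|^{2-N}\,dz = C|w|^{2-a} \le C$; and the remainder $D_3 := B(0,1)\setminus(D_1\cup D_2)$. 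On $D_3$ I would split once more according to whether $|w - z| \ge |z|/2$, in which case $|w-z|^{2-N}|z|^{-a} \le C|z|^{-(N-2+a)}$ with $N-2+a < N$, so the contribution is bounded by $C\int_{B(0,1)}|z|^{-(N-2+a)}\,dz < \infty$; or $|w-z| < |z|/2$, in which case the triangle inequality forces $|z|\sim |w|$ while $|w-z|\ge |w|/2$ (as we are outside $D_2$), so the integrand is $\le C|w|^{-(N-2+a)}$ over a region of measure $\le C|w|^N$, again giving $C|w|^{2-a}\le C$. Summing the three (resp. four) bounds and absorbing the harmless factor $1+2^{N-2}$ produced in the case $|w|\le 2$ yields the claim, hence \eqref{interior es}.

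Everything here is elementary; the only step demanding a little care is the secondary decomposition of $D_3$ in the regime $|w|\to 0$, where the two singularities of the integrand nearly merge and one must check that the combined exponent $N-2+a$ stays strictly below the borderline $N$. This is exactly where the hypothesis $a<2$ enters, beyond its two other roles of making $|z|^{-a}$ locally integrable and of producing the positive power $2-a$ that governs the small‑$|w|$ estimates.
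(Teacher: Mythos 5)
Your proposal is correct, and up to the reduction (translation, the rescaling $y=\xi+\lambda z$, and the elementary bound for $|w|>2$, where $w=\lambda^{-1}(x-\xi)$) it coincides with the paper's. The two arguments diverge only in the near-field regime $|w|\le 2$: the paper observes that the rescaled integral, extended over $B(0,2)$, is a Newtonian potential of $\gamma_N^{-1}|z|^{-a}$, that $|z|^{-a}\in L^q(B(0,2))$ for some $q>N/2$ precisely because $a<2$, and invokes elliptic ($W^{2,q}\hookrightarrow L^\infty$ for $q>N/2$) regularity to conclude uniform boundedness; you instead give a fully elementary three-region decomposition of $B(0,1)$ (near the origin, near $w$, and the remainder with a secondary split according to whether $|w-z|\gtrsim|z|$), each piece contributing $O(|w|^{2-a})$ or a finite absolute constant. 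Both routes are sound and short. The paper's is one line once one is willing to quote elliptic regularity; yours is self-contained, uses nothing beyond the triangle inequality and polar coordinates, and has the side benefit of making explicit where the hypothesis $a<2$ is used in the merged-singularity regime (namely that $N-2+a<N$). Either proof would serve the paper's purposes equally well.
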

\begin{proof}
Applying a change of variable $y-\xi \mapsto \lambda y$, we observe
\[\int_{B(\xi,\lambda)}\frac{\dy}{|x-y|^{N-2}|y-\xi|^a} = \lambda^{2-a} \int_{B(0,1)}\frac{\dy}{|z-y|^{N-2}|y|^a}\]
where $z := \lambda^{-1}(x-\xi)$.

Since
$$
\int_{B(0,1)}\frac{\dy}{|z-y|^{N-2}|y|^a} \le C|z|^{2-N}\int_{B(0,1)} |y|^{-a}\, \dy \le C|z|^{2-N} \quad \textup{for } |z| \ge 2,
$$
we have
$$
\int_{B(\xi,\lambda)}\frac{\dy}{|x-y|^{N-2}|y-\xi|^a} \le C\lambda^{2-a} \left|\lambda^{-1}(x-\xi)\right|^{2-N} \quad \textup{provided } |x-\xi| \ge 2\lambda.
$$
Besides, it holds that
\[-\Delta_z \int_{B(0,2)}\frac{\dy}{|z-y|^{N-2}|y|^a} = \ga_N^{-1} |z|^{-a}\in L^q(B(0,2)) \quad \textup{for some } q > \frac{N}{2}.\]
By elliptic regularity, we conclude that
\[\int_{B(\xi,\lambda)}\frac{\dy}{|x-y|^{N-2}|y-\xi|^a} \le C\lambda^{2-a} \quad \textup{provided } |x-\xi| < 2\lambda. \qedhere\]
\end{proof}

In the rest of the appendix, we assume that $(d,\tau) \in \Lambda_\delta$ (see \eqref{Lambda delta}) and \eqref{mu ep}--\eqref{trans and scaling} holds.
\begin{lemma}\label{basic est 2}
Let $N \ge 3$, $p \in (1,\frac{N}{N-2})$, $a>0$ and $b \in \mathbb{R}$. Then we have
\begin{equation}\label{basic est 21}
\int_{\Omega_\ep} U_\mx^a(x) |x|^{-b} \dx = \begin{cases}
O(1)\mu^{-\frac{Na}{q+1}+N-b}\(\dfrac{\ep}{\mu}\)^{N-b} &\textup{for } b>N,\\
O(1)\mu^{-\frac{Na}{q+1}} \left|\ln\(\dfrac{\ep}{\mu}\)\right| &\textup{for } b=N.
\end{cases}
\end{equation}
If we further assume that $b < N$, then
\begin{equation}\label{basic est 22}
\int_{\Omega_\ep} U_\mx^a(x) |x|^{-b}\, \dx=\begin{cases}
O(1) \mu^{-\frac{Na}{q+1}+N-b} &\textup{for } ((N-2)p-2)a+b>N,\\
O(1) \mu^{-\frac{Na}{q+1}+N-b}|\ln \mu|&\textup{for } ((N-2)p-2)a+b=N,\\
O(1) \mu^{-\frac{Na}{q+1}+((N-2)p-2)a} &\textup{for } ((N-2)p-2)a+b<N.
\end{cases}
\end{equation}
\end{lemma}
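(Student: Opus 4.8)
The plan is to rescale the integral to the bubble scale and reduce it to an elementary computation governed by the decay of $U$. Substituting $x=\mu y+\xi$, recalling $\xi=\mu\tau$ from \eqref{trans and scaling} and the definition \eqref{UVmx} of $U_\mx$, and using $|x|=\mu|y+\tau|$, one obtains
\[
\int_{\Omega_\ep} U_\mx^a(x)\,|x|^{-b}\,\dx
= \mu^{-\frac{Na}{q+1}+N-b}\int_{D_\ep}\frac{U^a(y)}{|y+\tau|^b}\,\dy,
\qquad D_\ep:=\mu^{-1}(\Omega-\xi)\setminus B\left(-\tau,\tfrac{\ep}{\mu}\right).
\]
Since $0$ is an interior point of $\Omega$ and $|\tau|\le\delta^{-1}$ on $\Lambda_\delta$, there are constants $0<c<C$ depending only on $\Omega$ and $\delta$ with $B(0,c\mu^{-1})\subseteq\mu^{-1}(\Omega-\xi)\subseteq B(0,C\mu^{-1})$ for all small $\ep$ and all $(d,\tau)\in\Lambda_\delta$; also $\ep/\mu=\ep^{1-\alpha}d^{-1}\to0$ by \eqref{mu ep}. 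Thus it suffices to estimate $\int_{D_\ep}U^a(y)|y+\tau|^{-b}\,\dy$ with all constants uniform in $(d,\tau)\in\Lambda_\delta$.

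I would split $D_\ep$ into the inner annulus $D_\ep^{\mathrm{in}}:=\{\tfrac{\ep}{\mu}\le|y+\tau|\le1\}$ (which, for $\ep$ small, lies in $D_\ep$) and the outer part $D_\ep^{\mathrm{out}}:=D_\ep\setminus D_\ep^{\mathrm{in}}$. On $D_\ep^{\mathrm{in}}$ we use $0\le U\le1$, hence $U^a\le1$, so that $\int_{D_\ep^{\mathrm{in}}}U^a(y)|y+\tau|^{-b}\,\dy\le\int_{\{\ep/\mu\le|z|\le1\}}|z|^{-b}\,\textup{d}z$, which equals $O(1)$ when $b<N$, $O(|\ln(\ep/\mu)|)$ when $b=N$, and $O((\ep/\mu)^{N-b})$ when $b>N$. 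On $D_\ep^{\mathrm{out}}$ we have $|y+\tau|\ge1$; combining the pointwise bound $U(y)\le C(1+|y|)^{2-(N-2)p}$ (a consequence of \eqref{decay for u}, which uses $p<\tfrac{N}{N-2}$) with $|y+\tau|\asymp 1+|y|$ on $\{|y|>2|\tau|\}$ and the fact that the leftover bounded region $\{|y+\tau|\ge1,\ |y|\le2|\tau|\}$ contributes $O(1)$, one gets $\int_{D_\ep^{\mathrm{out}}}U^a(y)|y+\tau|^{-b}\,\dy\le C\int_{\{1\le|y|\le C\mu^{-1}\}}|y|^{-\beta}\,\dy$ with $\beta:=((N-2)p-2)a+b$, which is $O(1)$ when $\beta>N$, $O(|\ln\mu|)$ when $\beta=N$, and $O(\mu^{\beta-N})$ when $\beta<N$.

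It then remains to add the two pieces, multiply back by $\mu^{-\frac{Na}{q+1}+N-b}$, and identify the dominant term. When $b\ge N$ we have $\beta>b\ge N$ (as $(N-2)p-2>0$ and $a>0$), so the outer piece is $O(1)$ and the inner one dominates, giving the two cases of \eqref{basic est 21} --- note $(\ep/\mu)^{N-b}\to\infty$ for $b>N$. When $b<N$ the inner piece is $O(1)$ and the outer one governs the answer; the subcases $\beta>N$, $\beta=N$, $\beta<N$ yield exactly the three cases of \eqref{basic est 22}, the exponent in the last being $-\tfrac{Na}{q+1}+N-b+(\beta-N)=-\tfrac{Na}{q+1}+((N-2)p-2)a$. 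The only place that needs care is the outer-region bookkeeping: keeping the comparison $|y+\tau|\asymp1+|y|$ and the truncation at scale $\mu^{-1}$ uniform over $\Lambda_\delta$, and checking that excising $B(-\tau,\ep/\mu)$ and truncating $\mu^{-1}(\Omega-\xi)$ never worsen the stated orders. This is routine once the scaling identity above is in place, so no essential difficulty arises.
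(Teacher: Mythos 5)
Your proof is correct and takes essentially the same route as the paper: rescale to the bubble scale, apply the power-law decay \eqref{decay for u} of $U$, and work out the resulting radial integral case by case. The only cosmetic difference is that the paper substitutes $x=\mu z$ (without translation) and handles the whole integral at once via the single comparison $U^a(z-\tau)\lesssim(1+|z|^{((N-2)p-2)a})^{-1}$, whereas you recenter at $\xi$ and split explicitly into the near-origin annulus and the far region; the two are interchangeable and lead to identical exponent bookkeeping.
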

\begin{proof}
By \eqref{decay for u}, we have
\[\int_{\Omega_\ep} U_\mx^a(x) |x|^{-b} \dx 
=O(1)\mu^{-\frac{Na}{q+1}+N-b} \int_{\mu^{-1}\Omega_\ep} \frac{\dx}{(1+|x|^{((N-2)p-2)a})|x|^b}.\]
If $b\ge N$, then $((N-2)p-2)a+b>N$, and hence
$$
\int_{\mu^{-1}\Omega_\ep}\frac{\dx}{(1+|x|^{((N-2)p-2)a})|x|^b} = \begin{cases}
O(1) \(\dfrac{\ep}{\mu}\)^{N-b} &\textup{for } b>N,\\
O(1) \left|\ln\(\dfrac{\ep}{\mu}\)\right| &\textup{for } b=N.
\end{cases}
$$
On the other hand, if $ b<N$, then
$$
\int_{\mu^{-1}\Omega_\ep}\frac{\dx}{(1+|x|^{((N-2)p-2)a})|x|^b} = \begin{cases}
O(1) &\textup{for } ((N-2)p-2)a+b>N,\\
O(1) |\ln \mu| &\textup{for } ((N-2)p-2)a+b=N,\\
O(1) \mu^{-N+((N-2)p-2)a+b} &\textup{for } ((N-2)p-2)a+b<N.
\end{cases}
$$
Combining all the computations, we derive \eqref{basic est 21} and \eqref{basic est 22}.
\end{proof}

\begin{lemma}
Let $N \ge 3$ and $\kappa \in (0,1)$ small. It holds that
\begin{equation}\label{basic est whole}
\int_{\R^N \setminus B(0,1)} \frac{\dy}{|x-y|^{N-2}|y|^a} =\begin{cases}
O(1)\min\left\{1,|x|^{2-a}\right\} &\textup{if } 2<a<N,\\
O(1)\min\left\{1,|x|^{2-N}|\ln |x||\right\} &\textup{if } a=N,\\
O(1)\min\left\{1,|x|^{2-N}\right\} &\textup{if } a>N
\end{cases}
\end{equation}
for $x \in \R^N$. In addition,
\begin{align}
\int_{\Omega \setminus B(0,\mu^\kappa)} \frac{\dy}{|x-y|^{N-2}|y|^b} &= \begin{cases}
O(1) &\textup{if } b<2,\\
O(1) |\ln\mu| &\textup{if } b=2,\\
O(1)\mu^{\kappa(2-b)} &\textup{if } 2<b<N,
\end{cases}\label{basic est}\\
\int_{\Omega} \frac{\dy}{|x-y|^{N-2}|y|^b} &= \begin{cases}
O(1) &\textup{if } b<2,\\
O(1) |\ln|x|| &\textup{if } b=2,\\
O(1)|x|^{2-b} &\textup{if } 2<b<N
\end{cases}\label{basic est o}
\end{align}
for $x \in \Omega$.
\end{lemma}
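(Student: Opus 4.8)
The plan is to prove all three displays by one elementary device: for the pole at the origin and a fixed $x$, I split the region of integration into the part near $y=0$, the part near $y=x$, and the remaining ``far'' part, bound the integrand by a single power of the relevant length scale on each piece, and integrate in polar coordinates. The companion estimate \eqref{interior es} already controls the integral over a small ball about $0$, so only the complementary regular parts remain.

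First I would dispose of \eqref{basic est o}, the classical bound for the composition of two Riesz-type kernels over a bounded set. Fix $x\in\Omega$ and write $\Omega = R_1\cup R_2\cup R_3$ with $R_1=\Omega\cap B(x,|x|/2)$, $R_2=\Omega\cap B(0,|x|/2)$, and $R_3=\Omega\setminus(R_1\cup R_2)$. On $R_1$ we have $|y|\ge|x|/2$, so its contribution is $\le C|x|^{-b}\int_{B(x,|x|/2)}|x-y|^{2-N}\,\dy\le C|x|^{2-b}$; on $R_2$ we have $|x-y|\ge|x|/2$, so (using $b<N$) its contribution is $\le C|x|^{2-N}\int_{B(0,|x|/2)}|y|^{-b}\,\dy\le C|x|^{2-b}$. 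On $R_3$ both $|x-y|$ and $|y|$ exceed $|x|/2$; splitting $R_3$ once more into $\{|y|\le 4|x|\}$, where $|y|\sim|x|$ and the kernel is integrated over an annulus, and $\{|y|\ge 4|x|\}$, where $|x-y|\sim|y|$ and one is left with $\int_{4|x|}^{C}r^{1-b}\,\textup{d}r$, produces a bound of order $1$, $|\ln|x||$, or $|x|^{2-b}$ according as $b<2$, $b=2$, or $2<b<N$. Since $|x|\le\textup{diam}(\Omega)=O(1)$, collecting these estimates gives \eqref{basic est o}.

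I would handle \eqref{basic est whole} in the same way, now splitting according to $|x|\le2$ or $|x|>2$. If $|x|\le2$, the pole $y=0$ is excised, the singularity at $y=x$ is integrable, and the tail $\int_{|y|\ge R}|y|^{2-N-a}\,\dy$ converges because $a>2$, so the whole integral is $O(1)$; this yields the factor $\min\{1,\cdot\}$. If $|x|>2$, the three-region split above, with the shell $1\le|y|\le|x|/2$ in place of $R_2$, gives $|x|^{2-a}$ when $2<a<N$, the logarithmic gain $|x|^{2-N}\ln|x|$ when $a=N$ (from the borderline shell integral $\int_{1}^{|x|/2}r^{N-1-a}\,\textup{d}r$), and $|x|^{2-N}$ when $a>N$. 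For \eqref{basic est} I would repeat the argument over $\Omega\setminus B(0,\mu^\kappa)$: when $b<2$ the lower cutoff is harmless and \eqref{basic est o} already gives $O(1)$; when $b=2$ or $2<b<N$ the only new contribution comes from the region $\mu^\kappa\le|y|\lesssim\max\{|x|,\mu^\kappa\}$, where one meets $\int_{\mu^\kappa}^{C}r^{1-b}\,\textup{d}r$, equal to $O(|\ln\mu|)$ when $b=2$ and to $O(\mu^{\kappa(2-b)})$ when $2<b<N$, and this term dominates.

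I do not expect a genuine obstacle here; the computation is routine. The two points needing care are (i) keeping every bound uniform in $x\in\Omega$ — for \eqref{basic est} this includes $x$ arbitrarily close to the deleted ball $B(0,\mu^\kappa)$, the worst case being $|x|$ comparable to $\mu^\kappa$ or smaller — and (ii) correctly locating the dominant region in the borderline cases $b=2$ and $a=N$, since a cruder estimate using only the two pointwise singularities would lose the logarithm or the correct power. Once the decomposition is in place, each piece collapses to a one-dimensional integral $\int r^{\gamma}\,\textup{d}r$ with $\gamma$ read off from the exponents.
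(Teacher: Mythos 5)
Your proposal is correct and uses essentially the same argument as the paper: the three-region decomposition into $B(x,|x|/2)$, $B(0,|x|/2)$, and the complement, together with a separate treatment of $|x|$ below the scale $\mu^\kappa$. The paper's proof is terser (it proves only \eqref{basic est}, declares \eqref{basic est whole} ``well-known,'' and remarks that the argument covers \eqref{basic est o}), and for the small-$|x|$ case it uses the cleaner one-line bound $|x-y|\ge |y|/2$ on all of $\Omega_{\mu^\kappa}$; your phrase ``the only new contribution comes from the region $\mu^\kappa\le|y|\lesssim\max\{|x|,\mu^\kappa\}$'' is imprecise (the dominant contribution to $\int_{\mu^\kappa}^{C}r^{1-b}\,\textup{d}r$ for $b>2$ is indeed near $r=\mu^\kappa$, but the integral is over all of $\Omega_{\mu^\kappa}$), though this is harmless and the one-dimensional integral you write down is the right one.
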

\begin{proof}
We will only verify \eqref{basic est}. Estimate \eqref{basic est whole} is well-known, and the proof of \eqref{basic est} essentially covers that of \eqref{basic est o}. We write $\Omega_{\mu^\kappa} = \Omega \setminus B(0,\mu^\kappa)$.

If $|x|\le \frac{\mu^\kappa}{2}$, then $|x-y|\ge \frac{|y|}{2}$ for $y \in \Omega_{\mu^\kappa}$. Thus
\[\int_{\Omega_{\mu^\kappa}} \frac{\dy}{|x-y|^{N-2}|y|^b}\le C\int_{\Omega_{\mu^\kappa}} \frac{\dy}{|y|^{N+b-2}} \le \begin{cases}
O(1) &\textup{if } b<2, \\
O(1) |\ln\mu| &\textup{if } b=2, \\
O(1)\mu^{\kappa(2-b)} &\textup{if } b>2.
\end{cases}\]
We next suppose that $|x|> \frac{\mu^\kappa}{2}$. By applying the inequalities
\[\begin{cases}
|y| \ge |x|-|x-y| > \dfrac{|x|}{2} &\textup{for } y \in B\big(x,\frac{|x|}{2}\big),\\
|x-y| \ge |x|-|y| > \dfrac{|x|}{2} &\textup{for } y \in B\big(0,\frac{|x|}{2}\big),\\
|y| \le |y-x|+|x| \le 3|y-x| &\textup{for } y \in \R^N \setminus \left[B\big(x,\frac{|x|}{2}\big) \cup B\big(0,\frac{|x|}{2}\big)\right],
\end{cases}\]
we derive
\begin{align*}
\int_{\Omega_{\mu^\kappa} \cap B(x,\frac{|x|}{2})} \frac{\dy}{|x-y|^{N-2}|y|^b} &= O(1)|x|^{-b} \int_{B(x,\frac{|x|}{2})}\frac{\dy}{|x-y|^{N-2}} = O(1)|x|^{2-b},\\
\int_{\Omega_{\mu^\kappa} \cap B(0,\frac{|x|}{2})} \frac{\dy}{|x-y|^{N-2}|y|^b} &= O(1)|x|^{2-N} \int_{B(0,\frac{|x|}{2})} \frac{\dy}{|y|^b} = O(1)|x|^{2-b} \quad (\textup{since } b<N),\\
\int_{\Omega_{\mu^\kappa} \setminus (B(x,\frac{|x|}{2}) \cup B(0,\frac{|x|}{2}))} \frac{\dy}{|x-y|^{N-2}|y|^b} &= O(1) \int_{\Omega \setminus B(0,\frac{|x|}{2})} \frac{\dy}{|y|^{N-2+b}} = \begin{cases}
O(1) &\textup{if } b<2,\\
O(1) |\ln|x|| &\textup{if } b=2,\\
O(1)|x|^{2-b} &\textup{if } b>2.
\end{cases}
\end{align*}
This completes the verification of \eqref{basic est}.
\end{proof}

\begin{lemma}\label{high est}
Let $N \ge 4$, $p \in (1,\frac{N}{N-2})$ and $\kappa\in (0,\frac{1}{N-1})$. Let $R_3$ be the function in \eqref{R3}. It holds that
\begin{equation}\label{R3 exp}
R_3(x) = O(1)\mu^{\frac{Np}{q+1}} \left[\mu^{1+\kappa(1-N)} + \(\frac{\ep}{\mu}\)^{N-2}\mu^{\kappa(2-(N-2)p)} + \ep^{N-2}\mu^{\kappa(2-(N-2)p)}\right]
\end{equation}
for $x \in \Omega_\ep$.
\end{lemma}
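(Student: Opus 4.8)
The plan is to estimate $R_3$ in \eqref{R3} summand by summand, using the pointwise bounds $0\le G_\ep(x,y)\le\ga_N|x-y|^{2-N}$ and $0\le G(y,\xi)\le\ga_N|y-\xi|^{2-N}$, the bound \eqref{wtr 2} for $\wtr_2$, and the potential estimates \eqref{basic est}. The one geometric observation is that $|\xi|=|\mu\tau|\le C\mu_\ep$ while $\mu^\kappa\sim\mu_\ep^\kappa$ with $\kappa<1$, so $|\xi|\le\tfrac12\mu^\kappa$ for $\ep$ small; consequently, on the domain of integration $\Omega_\ep\setminus B(\xi,\mu^\kappa)$ one has $\tfrac12|y-\xi|\le|y|\le2|y-\xi|$ and $|y|\ge\tfrac12\mu^\kappa$, and $\Omega_\ep\setminus B(\xi,\mu^\kappa)\subset\Omega\setminus B(0,\tfrac12\mu^\kappa)$, on which \eqref{basic est} holds with the same right-hand sides up to constants. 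The radial exponents that arise are $b=(N-2)(p-1)\in(0,1)$ and $b=(N-2)p\in(2,N)$ — the latter because $1<p<\tfrac{N}{N-2}$ — so that $\int_{\Omega\setminus B(0,\tfrac12\mu^\kappa)}|x-y|^{2-N}|y|^{-(N-2)(p-1)}\,\dy=O(1)$, $\int_{\Omega\setminus B(0,\tfrac12\mu^\kappa)}|x-y|^{2-N}|y|^{-(N-2)p}\,\dy=O(1)\mu^{\kappa(2-(N-2)p)}$, and $\int_{\Omega_\ep}|x-y|^{2-N}\,\dy=O(1)$.

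By \eqref{wtr 2} I write $\wtr_2(y)=O(1)\bigl(\beta+\gamma(y)\bigr)$ on $\Omega_\ep\setminus B(\xi,\mu^\kappa)$, where $\beta:=\mu^{\frac{N}{q+1}}\bigl(\mu^{1+\kappa(1-N)}+(\ep/\mu)^{N-2}\bigr)$ is independent of $y$ and $\gamma(y):=\ep^{N-2}\mu^{-\frac{N}{p+1}}\bigl(\mu^{N-2}+\ep/\mu\bigr)|y|^{2-N}$. Substituting into \eqref{R3} and using $G(y,\xi)^{p-1}\le\ga_N^{p-1}|y-\xi|^{(2-N)(p-1)}$, each summand becomes a sum of a piece with $y$-independent scalar factor $\beta$ or $\beta^p$ and radial weight of exponent $<2$ (namely $|y-\xi|^{(2-N)(p-1)}$, $|y|^{(2-N)(p-1)}$, or none), and a piece with factor $\gamma(y)$; together with the term $[\mu^{\frac{N}{q+1}}G(y,\xi)]^{p-1}\mu^{-\frac{N}{p+1}}\ep^{N-2}|y|^{2-N}$ in \eqref{R3}, each of the latter pieces, after merging its radial weights via $|y|\sim|y-\xi|$, carries the weight $|y|^{(2-N)p}$. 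By the integral estimates above, each $\beta$-piece integral is $O(1)$ times a product of powers of $\ep,\mu$, and each $\gamma$-piece integral (and the $[\cdots]^{p-1}$ term) is such a product times $\mu^{\kappa(2-(N-2)p)}$.

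It remains to simplify the scalar prefactors. Using \eqref{simple rel}, \eqref{critical}, $\ep^{N-2}=\mu^{N-2}(\ep/\mu)^{N-2}$, and $\ep^{N-2}\mu^{-\frac{N}{p+1}}=\mu^{\frac{N}{q+1}}(\ep/\mu)^{N-2}$, one checks that every $\beta$-contribution is at most $C\mu^{\frac{Np}{q+1}}\bigl(\mu^{1+\kappa(1-N)}+(\ep/\mu)^{N-2}\bigr)$; here $\kappa<\tfrac1{N-1}$ enters, ensuring $1+\kappa(1-N)\ge0$ and hence $\mu^{p(1+\kappa(1-N))}\le\mu^{1+\kappa(1-N)}$ for the $|\wtr_2|^p$ summand. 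Likewise the $\ep/\mu$ half of every $\gamma$-contribution and the $[\cdots]^{p-1}$ term are at most $C\mu^{\frac{Np}{q+1}}(\ep/\mu)^{N-2}\mu^{\kappa(2-(N-2)p)}$, after discarding harmless factors $(\ep/\mu)^{(N-2)(p-1)},(\ep/\mu)^p,(\ep/\mu)^{N-1}\le1$, and the $\mu^{N-2}$ half of every $\gamma$-contribution is at most $C\mu^{\frac{Np}{q+1}}\ep^{N-2}\mu^{\kappa(2-(N-2)p)}$ (since $(\ep/\mu)^{N-2}\mu^{N-2}=\ep^{N-2}$). Summing these bounds yields \eqref{R3 exp}.

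The substance of the proof is the exponent bookkeeping sketched above; the one conceptual point — the place where a naive estimate fails — is the weight-merging step. Estimating $|y-\xi|^{(2-N)(p-1)}$ and $|y|^{2-N}$ separately by their values $\mu^{\kappa(2-N)(p-1)}$ and $\mu^{\kappa(2-N)}$ on $\pa B(\xi,\mu^\kappa)$ is too wasteful and produces a bound strictly larger than \eqref{R3 exp}; one must instead combine them into $|y|^{(2-N)p}$ first, after which \eqref{basic est} — with the exponent $(N-2)p$ lying in $(2,N)$ — supplies the decaying factor $\mu^{\kappa(2-(N-2)p)}$.
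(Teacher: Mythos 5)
Your proposal is correct and follows essentially the same route as the paper: it replaces $|y-\xi|$ by $|y|$ on the domain (justified by $|\xi|\lesssim\mu_\ep\ll\mu^\kappa$), bounds $G_\ep$ and $G^{p-1}$ by their singular parts, splits $\wtr_2$ via \eqref{wtr 2} into a $y$-independent part $\beta$ and the decaying part $\gamma(y)=O(1)\ep^{N-2}\mu^{-N/(p+1)}(\mu^{N-2}+\ep/\mu)|y|^{2-N}$, merges weights to reduce everything to the two exponents $(N-2)(p-1)<2$ and $(N-2)p\in(2,N)$ for which \eqref{basic est} applies, and uses $\kappa<\tfrac1{N-1}$ to handle the $|\wtr_2|^p$ term. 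This is exactly what the paper's proof does (it first records the merged integrand, then applies \eqref{basic est} to its three pieces), and the scalar bookkeeping you indicate matches the paper's explicit intermediate expressions up to discarding the same harmless subunit powers of $\ep/\mu$ and $\ep$.
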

\begin{proof}
Let $\Omega_{\mu^\kappa} = \Omega_\ep \setminus B(0,\mu^\kappa)$. A direct computation shows that
\begin{align*}
R_3(x) = O(1)\int_{\Omega_{\mu^\kappa}} \frac{1}{|x-y|^{N-2}} &\left[\frac{\mu^{\frac{Np}{q+1}+2-N}\ep^{N-2}}{|y|^{(N-2)p}} + \big|\wtr_2(y)\big|^p \right. \\
&\ \left. + \left\{1+\(\frac{\ep}{\mu}\)^{(N-2)(p-1)}\right\} \frac{\mu^{\frac{N(p-1)}{q+1}}}{|y|^{(N-2)(p-1)}} \big|\wtr_2(y)\big| \right]\dy.
\end{align*}
By \eqref{basic est} and \eqref{wtr 2}, we discover
\[\mu^{\frac{Np}{q+1}} \int_{\Omega_{\mu^\kappa}} \frac{\mu^{2-N} \ep^{N-2}}{|x-y|^{N-2}|y|^{(N-2)p}}\, \dy
= O(1)\mu^{\frac{Np}{q+1}} \(\frac{\ep}{\mu}\)^{N-2}\mu^{\kappa(2-(N-2)p)},\]
\begin{multline*}
\int_{\Omega_{\mu^\kappa}} \frac{1}{|x-y|^{N-2}}\big|\wtr_2(y)\big|^p \dy\\
=O(1) \mu^{\frac{Np}{q+1}} \left[\mu^{(1+\kappa(1-N))p}+\(\frac{\ep}{\mu}\)^{(N-2)p} + \ep^{(N-2)p}\mu^{\kappa(2-(N-2)p)} + \(\frac{\ep}{\mu}\)^{(N-1)p}\mu^{\kappa(2-(N-2)p)}\right],
\end{multline*}
and
\begin{multline*}
\mu^{\frac{N(p-1)}{q+1}} \int_{\Omega_{\mu^\kappa}} \frac{1}{|x-y|^{N-2}|y|^{(N-2)(p-1)}} \ \big|\wtr_2(y)\big|\dy\\
=O(1)\mu^{\frac{Np}{q+1}} \left[\mu^{1+\kappa(1-N)} + \(\frac{\ep}{\mu}\)^{N-2} + \ep^{N-2}\mu^{\kappa(2-(N-2)p)} + \(\frac{\ep}{\mu}\)^{N-1}\mu^{\kappa(2-(N-2)p)}\right].
\end{multline*}
Combining these, we deduce \eqref{R3 exp}.
\end{proof}

\begin{lemma}\label{high est2}
Let $N \ge 4$ and $p \in (1,\frac{N}{N-2})$. If $Q_\mx$ is the integral in \eqref{Q mu}, then
\begin{equation}\label{Q mu est}
Q_\mx = o(1)\mu^{(N-2)p-2}.
\end{equation}
\end{lemma}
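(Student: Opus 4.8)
The plan is to split $Q_\mx$ according to the four summands inside the bracket of \eqref{Q mu}, writing $Q_\mx=I_1+I_2+I_3+I_4$ with
\[
I_1=\mu^{\frac{2Np}{q+1}}\int_{\Omega_\ep}U_\mx^{q-1}\dx,\qquad
I_2=\ep^{2(N-2)}\mu^{-\frac{2N}{q+1}}\int_{\Omega_\ep}U_\mx^{q-1}(x)|x|^{2(2-N)}\dx,
\]
\[
I_3=\ep^{2(N-2)}\mu^{-\frac{2N}{q+1}}\int_{\Omega_\ep}U_\mx^{q-1}(x)\mca_{\ep,(d,\tau)}^2(x)\dx,\qquad
I_4=\ep^{2(N-2)p}\mu^{-\frac{2Np}{p+1}}\int_{\Omega_\ep}U_\mx^{q-1}(x)|x|^{2(2-(N-2)p)}\dx,
\]
and to prove that each is $o(1)\mu^{(N-2)p-2}$. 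The only preliminary step is to use \eqref{mca est} in $I_3$: it replaces $\mca_{\ep,(d,\tau)}^2(x)$ by $O(1)\mu^{2((N-2)p-N)}|x|^{2(2-(N-2)p)}$, so $I_3$ becomes a constant multiple of $\ep^{2(N-2)}\mu^{-\frac{2N}{q+1}+2((N-2)p-N)}\int_{\Omega_\ep}U_\mx^{q-1}(x)|x|^{2(2-(N-2)p)}\dx$, of exactly the same type as $I_4$. After this, all four integrals have the form $\int_{\Omega_\ep}U_\mx^a(x)|x|^{-b}\dx$ and can be evaluated by Lemma \ref{basic est 2}. Throughout I would use the identities $(N-2)p-2=\frac{N(p+1)}{q+1}$ and $(\ep/\mu_\ep)^{N-2}=\mu_\ep^{(N-2)p-2}$ from \eqref{simple rel} together with $\frac{N}{p+1}=(N-2)-\frac{N}{q+1}$ from \eqref{critical}; since $d\in[\delta,\delta^{-1}]$, all constants are automatically uniform in $(d,\tau)\in\Lambda_\delta$.

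For $I_1$ I apply \eqref{basic est 22} with $a=q-1$, $b=0$. A direct simplification using the identities above shows that $I_1=O(1)\mu^{2((N-2)p-2)}$ when $((N-2)p-2)(q-1)\ge N$ (with an extra $|\ln\mu|$ in the equality case), and $I_1=O(1)\mu^{Np}$ when $((N-2)p-2)(q-1)<N$; the first is $o(1)\mu^{(N-2)p-2}$ because $(N-2)p-2>0$, and the second because $Np>(N-2)p-2$ is equivalent to $pq>1$, which holds since $p,q>1$. For $I_2$, noting $2(N-2)\ge N$, the relevant case of \eqref{basic est 21} gives $\int_{\Omega_\ep}U_\mx^{q-1}(x)|x|^{2(2-N)}\dx=O(1)\mu^{-\frac{N(q-1)}{q+1}}\ep^{4-N}$ (plus a $|\ln(\ep/\mu)|$ factor when $N=4$), and a short computation collapses $I_2$ to $O(1)(\ep/\mu)^{N}$.

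For $I_3$ and $I_4$ I set $b=2(N-2)p-4$, which lies in $(0,\infty)$ since $(N-2)p>2$, and apply Lemma \ref{basic est 2} with $a=q-1$. The useful arithmetic observations, both immediate from \eqref{critical} and \eqref{simple rel}, are: if $b<N$ then $((N-2)p-2)(q-1)+b=N(1+p)>N$, so only the first alternative of \eqref{basic est 22} is ever triggered; if $b\ge N$ only \eqref{basic est 21} applies. Feeding the outcome of Lemma \ref{basic est 2} back into $I_3$ and $I_4$ and using the identities, the powers of $\mu$ cancel completely, leaving $I_4=O(1)(\ep/\mu)^{2(N-2)p}$ and $I_3=O(1)(\ep/\mu)^{2(N-2)}$ when $b<N$, and $I_4=O(1)(\ep/\mu)^{N+4}$ and $I_3=O(1)(\ep/\mu)^{3N-2(N-2)p}$ when $b\ge N$, up to at most a $|\ln(\ep/\mu)|$ factor in the borderline case $b=N$. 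Finally I would invoke the elementary fact (immediate from $(\ep/\mu_\ep)^{N-2}=\mu_\ep^{(N-2)p-2}$ and $d$ bounded) that $(\ep/\mu)^{\beta}=o(1)\mu^{(N-2)p-2}$ whenever $\beta>N-2$, and check each exponent: $N$, $2(N-2)$, $2(N-2)p$ and $N+4$ all exceed $N-2$ as $p>1$, while $3N-2(N-2)p>N-2$ amounts to $p<\frac{N+1}{N-2}$, implied by $p<\frac{N-1}{N-2}$. Adding the four bounds gives \eqref{Q mu est}.

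The estimate is routine once organized this way; the only real work is the bookkeeping of the various cases of Lemma \ref{basic est 2} and the verification that each reduces to a strictly positive power of $\mu_\ep$ or $\ep/\mu_\ep$ over the admissible range $p\in(1,\frac{N-1}{N-2})$. The mildly pleasant feature, which is the thing to watch for, is that the potentially delicate subcases are either vacuous (the last alternative of \eqref{basic est 22} is never used for $I_3$ or $I_4$) or collapse after the algebraic identities are applied, so no estimate is genuinely borderline.
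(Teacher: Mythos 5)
Your proof is correct and follows essentially the same route as the paper's: both split $Q_\mx$ into the four summands, use \eqref{mca est} to reduce the $\mca_{\ep,(d,\tau)}$ term to the same shape as the last one, and then apply Lemma \ref{basic est 2} together with the algebraic identities from \eqref{critical} and \eqref{simple rel}. The paper's proof is terser, simply asserting the final estimates for each piece, while you carry out the case analysis of Lemma \ref{basic est 2} explicitly; your bookkeeping checks out (and incidentally the lemma only assumes $p<\frac{N}{N-2}$, which already gives $p<\frac{N+1}{N-2}$, so the appeal to the stronger $p<\frac{N-1}{N-2}$ from the main theorem is unnecessary).
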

\begin{proof}
A straightforward computation using \eqref{decay for u} shows that
\[\mu^{\frac{2Np}{q+1}}\int_{\Omega_\ep}U_\mx^{q-1}
=o(1)\mu^{(N-2)p-2}.\]
Also, in light of Lemma \ref{basic est 2} and \eqref{mca est}, we have
\[\(\ep^{N-2}\mu^{-\frac{N}{q+1}}\)^2\int_{\Omega_\ep}U_\mx^{q-1} {|x|^{2(2-N)}}\, \dx
= O(1)\(\frac{\ep}{\mu}\)^N\(1 + \left|\ln \frac{\ep}{\mu}\right|\mone_{N=4}\),\]
\[\(\ep^{(N-2)p}\mu^{-\frac{Np}{p+1}}\)^2 \int_{\Omega_\ep} U_\mx^{q-1} |x|^{2(2-(N-2)p)} \dx
= o(1)\(\frac{\ep}{\mu}\)^{N-2}\]
and
\[\(\ep^{N-2}\mu^{-\frac{N}{q+1}}\)^2 \int_{\Omega_\ep} U_\mx^{q-1}\mca_{\ep,(d,\tau)}^2
= o(1)\(\frac{\ep}{\mu}\)^{N-2}.\]
Collecting all the estimates and appealing \eqref{simple rel}, we obtain \eqref{Q mu est}.
\end{proof}

\begin{lemma}\label{reg part ene est}
Let $N \ge 4$, $\kappa\in (0,1)$ and $p \in (1,\frac{N}{N-2})$. Then it holds that
\begin{equation}\label{reg part ene est eq}
\ep^{N-2} \int_{\mu^{-1}(\Omega_\ep-\xi)} \int_{B(0,\mu^{\kappa-1})\setminus B(-\tau,\frac{\ep}{\mu})} \frac{H_\ep(\mu x+\xi,\mu y+\xi)U^q(x) V^{p-1}(y)}{|y+\tau|^{N-2}} \dy\dx
= o(1) \(\frac{\ep}{\mu}\)^{N-2}.
\end{equation}
\end{lemma}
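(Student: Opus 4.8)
The plan is to rescale and then decompose $H_\ep$ by means of Lemma~\ref{reg est for puntured}. Writing $x' := \mu x+\xi = \mu(x+\tau)$ and $y' := \mu y+\xi = \mu(y+\tau)$, we have
\[H_\ep(x',y') = H(x',y') + H_{\ep,1}(x',y') + O(1)\ep^{N-2}\(|x'|^{2-N}+|y'|^{2-N}\),\]
so the left-hand side of \eqref{reg part ene est eq} splits into an ``$H$-piece'', an ``$H_{\ep,1}$-piece'', and a ``remainder piece''. Since $|\xi| \le \mu_\ep\delta^{-2}$ and $\mu^\kappa \to 0$, the ball $B(\xi,\mu^\kappa)$ lies in a fixed compact $K \subset \Omega$ once $\ep$ is small, so $y'$ always ranges over $K$; hence $0 \le H(x',y') \le C_K$ by the maximum principle, while $|x'| = \mu|x+\tau| > \ep$ and $|y'| = \mu|y+\tau| > \ep$ by the domain constraints. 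Throughout I shall use $0 \le U \le 1$, $0 < V \le V(0)$, $\int_{\R^N}U^q < \infty$ (which follows from \eqref{critical}, as it forces $((N-2)p-2)q > N$), the decay estimate \eqref{decay for v}, and \eqref{simple rel}.

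\emph{The easy pieces.} Bounding $H$ by $C_K$, the $H$-piece is $\le C\ep^{N-2}\(\int_{\R^N}U^q\)\(\int_{B(0,\mu^{\kappa-1})}|y+\tau|^{2-N}V^{p-1}(y)\,\dy\)$. By \eqref{decay for v} the last integral is $O(1)\mu^{(\kappa-1)(N-(N-2)p)}$, with $N-(N-2)p>0$ since $p<\frac{N}{N-2}$. Dividing by $\(\frac{\ep}{\mu}\)^{N-2}=\ep^{N-2}\mu^{2-N}$ leaves $\mu$ to the power $(\kappa-1)(N-(N-2)p)+(N-2)$, which at $\kappa=0$ equals $(N-2)p-2>0$ (as $p>1\ge\frac{2}{N-2}$) and increases with $\kappa$; hence the $H$-piece is $o(1)\(\frac{\ep}{\mu}\)^{N-2}$ for every $\kappa\in(0,1)$. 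The remainder piece equals $O(1)\ep^{2(N-2)}\mu^{2-N}$ times $\(\int U^q(x)|x+\tau|^{2-N}\dx\)\(\int_{B(0,\mu^{\kappa-1})}|y+\tau|^{2-N}V^{p-1}(y)\dy\) + \(\int U^q\)\(\int_{|y+\tau|>\frac{\ep}{\mu}}|y+\tau|^{2(2-N)}V^{p-1}(y)\dy\)$; since $\alpha<1$, combining \eqref{simple rel} with the integral estimates recorded in the next paragraph shows this too is $o(1)\(\frac{\ep}{\mu}\)^{N-2}$.

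\emph{The $H_{\ep,1}$-piece.} Using the inversion identity $|w|\left|z-\frac{\ep^2w}{|w|^2}\right| = |z|\left|w-\frac{\ep^2z}{|z|^2}\right|$ together with \eqref{H ep1}, rescaling gives
\[H_{\ep,1}\(\mu(x+\tau),\mu(y+\tau)\) = \ga_N\ep^{N-2}\mu^{2(2-N)}\,|y+\tau|^{2-N}\left|(x+\tau) - \frac{(\ep/\mu)^2(y+\tau)}{|y+\tau|^2}\right|^{2-N},\]
and the point $w'(y) := \frac{(\ep/\mu)^2(y+\tau)}{|y+\tau|^2}$ satisfies $|w'(y)| \le \frac{\ep}{\mu}$, precisely because $|y+\tau| > \frac{\ep}{\mu}$. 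Thus this piece equals $\ga_N\ep^{2(N-2)}\mu^{2(2-N)}$ times
\[\int_{B(0,\mu^{\kappa-1})\setminus B(-\tau,\frac{\ep}{\mu})} \frac{V^{p-1}(y)}{|y+\tau|^{2(N-2)}} \(\int_{\mu^{-1}(\Omega_\ep-\xi)} \frac{U^q(x)}{|(x+\tau)-w'(y)|^{N-2}}\,\dx\)\dy.\]
As $U$ is bounded and $U^q$ is integrable with $U^q(x)\to0$, the inner integral is $O(1)$ uniformly in $y$ (hence in $w'(y)$) and $\tau$. The remaining $y$-integral is $\int_{|y+\tau|>\frac{\ep}{\mu}} |y+\tau|^{2(2-N)}V^{p-1}(y)\,\dy = O(1)\left[\(\frac{\ep}{\mu}\)^{4-N} + \left|\ln\(\frac{\ep}{\mu}\)\right|\mone_{N=4}\right]$, the part near $-\tau$ being finite only because of the hole at $\frac{\ep}{\mu}$ (since $|y+\tau|^{2(2-N)}$ is non-integrable at $-\tau$ for $N\ge4$) and the far part bounded via \eqref{decay for v}. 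Multiplying the three factors and simplifying with \eqref{simple rel} yields $O(1)\left[\(\frac{\ep}{\mu}\)^N + \(\frac{\ep}{\mu}\)^4\left|\ln\(\frac{\ep}{\mu}\)\right|\mone_{N=4}\right] = o(1)\(\frac{\ep}{\mu}\)^{N-2}$, giving \eqref{reg part ene est eq}.

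\emph{Main obstacle.} Only the $H_{\ep,1}$-piece is genuinely delicate: the crude bound $H_{\ep,1}(z,w)\le\ga_N|z-w|^{2-N}$ gives merely $O(1)\(\frac{\ep}{\mu}\)^{N-2}$ here. The gain comes from recognizing the inversion (Kelvin) structure of $H_{\ep,1}$ under the rescaling — which lets the $x$- and $y$-integrations decouple cleanly — and from the observation that the hole $B(-\tau,\frac{\ep}{\mu})$ is exactly what turns the otherwise divergent $\int|y+\tau|^{2(2-N)}V^{p-1}(y)\,\dy$ into a finite quantity of size $\(\frac{\ep}{\mu}\)^{4-N}$ (a logarithm when $N=4$), which is what upgrades $O(1)\(\frac{\ep}{\mu}\)^{N-2}$ to $o(1)\(\frac{\ep}{\mu}\)^{N-2}$.
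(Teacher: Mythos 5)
Your proof is correct and takes essentially the same route as the paper: decompose $H_\ep$ via Lemma~\ref{reg est for puntured} into an $H$-piece, an $H_{\ep,1}$-piece, and a remainder, then estimate each against $(\ep/\mu)^{N-2}$. One cosmetic remark: the ``inversion identity'' you quote is never actually invoked --- the rescaled expression for $H_{\ep,1}(\mu(x+\tau),\mu(y+\tau))$ that you write down follows by direct substitution into \eqref{H ep1}, and the paper likewise just substitutes and then recognizes the $x$-integral as a multiple of $V(\tau - w'(y))$ via $-\Delta V = U^q$, where you instead observe it is uniformly $O(1)$; both are fine.
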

\begin{proof}
An application of Fubini's theorem, \eqref{H ep1} and \eqref{decay for v} yields
\begin{align*}
&\ \ep^{N-2} \int_{\mu^{-1}(\Omega_\ep-\xi)} \int_{B(0,\mu^{\kappa-1})\setminus B(-\tau,\frac{\ep}{\mu})} \frac{H (\mu x+\xi,\mu y+\xi) U^q(x) V^{p-1}(y)}{|y+\tau|^{N-2}}\, \dy\dx \\
&= O(1)\ep^{N-2} \int_{\R^N} U^q(x)\dx \int_{B(0,\mu^{\kappa-1})\setminus B(-\tau,\frac{\ep}{\mu})} \frac{V^{p-1}(y)}{|y+\tau|^{N-2}}\, \dy \\
&= O(1)\ep^{N-2} \int_{B(0,\mu^{-1})} \frac{1}{1+|y|^{(N-2)(p-1)}}\frac{\dy}{|y|^{N-2}} \\ 
&= O(1)\(\frac{\ep}{\mu}\)^{N-2}\mu^{(N-2)p-2} = o(1)\(\frac{\ep}{\mu}\)^{N-2},
\end{align*}
\begin{align*}
&\ \ep^{N-2} \int_{\mu^{-1}(\Omega_\ep-\xi)} \int_{B(0,\mu^{\kappa-1})\setminus B(-\tau,\frac{\ep}{\mu})} \frac{H_{\ep,1}(\mu x+\xi,\mu y+\xi)U^q(x) V^{p-1}(y)}{|y+\tau|^{N-2}}\, \dy\dx \\
&= O(1)\(\frac{\ep}{\mu}\)^{2(N-2)} \int_{B(0,\mu^{\kappa-1})\setminus B(-\tau,\frac{\ep}{\mu})}\int_{\R^N}\Big|x+\tau-\(\frac{\ep}{\mu}\)^2\frac{y+\tau}{|y+\tau|^2}\Big|^{2-N}U^q(x)\dx \frac{V^{p-1}(y)}{|y+\tau|^{2(N-2)}}\, \dy \\
&= O(1)\(\frac{\ep}{\mu}\)^{2(N-2)} \int_{B(0,\mu^{\kappa-1})\setminus B(-\tau,\frac{\ep}{\mu})} V\(\tau-\(\frac{\ep}{\mu}\)^2\frac{y+\tau}{|y+\tau|^2}\) \frac{V^{p-1}(y)}{|y+\tau|^{2(N-2)}}\, \dy \\
&= O(1)\(\frac{\ep}{\mu}\)^{2(N-2)} \int_{\R^N\setminus B(0,\frac{\ep}{\mu})} \frac{1}{1+|y|^{(N-2)(p-1)}} \frac{\dy}{|y|^{2(N-2)}}
= o(1)\(\frac{\ep}{\mu}\)^{N-2}
\end{align*}
and
\begin{align*}
&\ \frac{\ep^{2(N-2)}}{\mu^{N-2}} \int_{\mu^{-1}(\Omega_\ep-\xi)} \int_{B(0,\mu^{\kappa-1})\setminus B(-\tau,\frac{\ep}{\mu})} \(| x+\tau|^{2-N}+| y+\tau|^{2-N}\)\frac{U^q(x) V^{p-1}(y)}{|y+\tau|^{N-2}}\, \dy\dx \\
&=O(1)\frac{\ep^{2(N-2)}}{\mu^{N-2}} \int_{B(0,\mu^{\kappa-1})\setminus B(-\tau,\frac{\ep}{\mu})} \left[\int_{\R^N} \frac{U^q(x)}{|x+\tau|^{N-2}} \dx\frac{V^{p-1}(y)}{|y+\tau|^{N-2}} + \frac{V^{p-1}(y)}{|y+\tau|^{2(N-2)}}\right]\dy \\
&=O(1)\frac{\ep^{2(N-2)}}{\mu^{N-2}} \int_{B(\tau,\mu^{\kappa-1})\setminus B(0,\frac{\ep}{\mu})} \(\frac{1}{|y|^{N-2}}+ \frac{1}{|y|^{2(N-2)}}\) \frac{\dy}{1+|y|^{(N-2)(p-1)}}
= o(1) \(\frac{\ep}{\mu}\)^{N-2}.
\end{align*}
Appealing the above computations and Lemma \ref{reg est for puntured}, we establish \eqref{reg part ene est eq}.
\end{proof}

\begin{lemma}\label{nsPUPV est}
Given $(d,\tau) \in \Lambda_\delta$, let $s$ be one of the parameters $d,\tau_1,\ldots,\tau_N$. Then there exists a constant $C > 0$ depending only on $N$, $p$ and $\delta$ such that
\begin{equation}\label{nsPUPV est0}
|\nabla_s\mcp U_\mx(x)| \le C\mcp U_\mx(x) \quad \text{and} \quad |\nabla_s PV_\mx(x)| \le CPV_\mx(x) \quad \text{for } x \in \Omega_\ep.
\end{equation}
\end{lemma}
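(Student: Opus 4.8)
The plan is to derive \eqref{nsPUPV est0} from two successive applications of the maximum principle, once we have the elementary pointwise bound
\[ |\nabla_s U_\mx(x)| \le C\, U_\mx(x), \qquad x \in \R^N, \]
with $C = C(N,p,\delta)$, for each $s \in \{d,\tau_1,\dots,\tau_N\}$. This is obtained exactly as the analogous estimate $|\nabla_d V_\mx| \le CV_\mx$ used in the proof of Proposition \ref{second approx2}: writing $z = \mu^{-1}(x-\xi)$, one computes, using $\mu = \mu_\ep d$ and $\xi = \mu\tau$, that $\nabla_{\tau_l}U_\mx(x) = -\mu^{-\frac{N}{q+1}}\Psi_{1,0}^l(z) = -\mu\,\Psi_\mx^l(x)$ for $l = 1,\dots,N$, while
\[ \nabla_d U_\mx(x) = -d^{-1}\mu^{-\frac{N}{q+1}}\Big[\Psi_{1,0}^0(z) + \textstyle\sum_{l=1}^N \tau_l\,\Psi_{1,0}^l(z)\Big]. \]
Each $\Psi_{1,0}^l$ ($l = 0,\dots,N$) is smooth near the origin and, by \eqref{decay for u}--\eqref{decay for grad u} together with the radial monotonicity and positivity of $U$, is dominated by $U$ at infinity; since also $d \ge \delta$ and $|\tau| \le \delta^{-1}$ on $\Lambda_\delta$, the claim follows with $C = C(N,p,\delta)$.

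Because $\Omega_\ep$ does not depend on $(d,\tau)$, we may differentiate the second equation of \eqref{PUPV eq} and the equation \eqref{mcp U eq} in $s$; thus $\nabla_s PV_\mx$ and $\nabla_s\mcp U_\mx$ vanish on $\pa\Omega_\ep$ and solve
\[ -\Delta(\nabla_s PV_\mx) = qU_\mx^{q-1}\,\nabla_s U_\mx, \qquad -\Delta(\nabla_s\mcp U_\mx) = p(PV_\mx)^{p-1}\,\nabla_s PV_\mx \quad \text{in } \Omega_\ep. \]
By the pointwise bound above, $\big|qU_\mx^{q-1}\nabla_s U_\mx\big| \le Cq\,U_\mx^q = Cq\,(-\Delta PV_\mx)$, so $\nabla_s PV_\mx - Cq\,PV_\mx$ is subharmonic and $\nabla_s PV_\mx + Cq\,PV_\mx$ is superharmonic in $\Omega_\ep$, both vanishing on $\pa\Omega_\ep$; the maximum principle gives $|\nabla_s PV_\mx| \le Cq\,PV_\mx$. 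Inserting this into the second equation yields $\big|p(PV_\mx)^{p-1}\nabla_s PV_\mx\big| \le Cp\,(PV_\mx)^p = Cp\,(-\Delta\mcp U_\mx)$, and the same comparison gives $|\nabla_s\mcp U_\mx| \le Cp\,\mcp U_\mx$. All constants are independent of $\ep$ and of $(d,\tau) \in \Lambda_\delta$, which is \eqref{nsPUPV est0}.

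The only genuine ingredient is the pointwise bound $|\nabla_s U_\mx| \le C U_\mx$; the remainder is a routine maximum-principle comparison, available here because the right-hand sides $U_\mx^q$ and $(PV_\mx)^p$ in \eqref{PUPV eq}--\eqref{mcp U eq} are nonnegative and because the domain $\Omega_\ep$ is held fixed as $(d,\tau)$ varies, so no boundary-motion terms intervene (in contrast with the situation handled in Proposition \ref{second approx2}).
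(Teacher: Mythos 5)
Your proposal is correct and takes essentially the same approach as the paper's proof: establish the pointwise bound $|\nabla_s U_\mx| \le C U_\mx$ (which the paper cites tersely to \eqref{UVmx} and Lemma \ref{limit decay}, and which you spell out via the identities $\nabla_{\tau_l}U_\mx = -\mu\Psi_\mx^l$ and $\nabla_d U_\mx = -d^{-1}\mu^{-N/(q+1)}[\Psi_{1,0}^0 + \tau\cdot\nabla U]$ together with the decay estimates), then bootstrap twice via the maximum principle applied to $C\,PV_\mx \pm \nabla_s PV_\mx$ and then to $C\,\mcp U_\mx \pm \nabla_s\mcp U_\mx$. Your additional remark that $\Omega_\ep$ does not move as $(d,\tau)$ varies (so that differentiation commutes with the boundary condition) is a small but genuine clarification that the paper leaves implicit.
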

\begin{proof}
In light of \eqref{UVmx}, \eqref{PUPV eq} and Lemma \ref{limit decay}, we know
\[\begin{cases}
-\Delta \(C PV_\mx \pm \nabla_s PV_\mx\) = U_\mx^{q-1} \(CU_\mx \pm q\nabla_sU_\mx\) \ge 0 &\textup{in } \Omega_\ep,\\
C PV_\mx \pm \nabla_s PV_\mx=0 &\textup{on } \pa \Omega_\ep.
\end{cases}\]
for some $C > 0$ depending only on $N$, $p$ and $\delta$. The maximum principle shows that the second inequality in \eqref{nsPUPV est0} holds.
Also, applying an analogous argument to \eqref{mcp U eq}, we see that the first inequality in \eqref{nsPUPV est0} is true.
\end{proof}

\section{Proof of $(\psi_{\ep,(d,\tau)},\phi_{\ep,(d,\tau)}) \in (L^{\infty}(\Omega_\ep))^2$}\label{sec bdd}
Fix $\ep > 0$ small and $(d,\tau) \in \Lambda_\delta$. In this appendix, we prove that $(\psi_{\ep,(d,\tau)},\phi_{\ep,(d,\tau)}) \in (L^{\infty}(\Omega_\ep))^2$ as stated in Proposition \ref{nonlin prop}.

\medskip
Equation \eqref{aux eq2} reads
\begin{equation}\label{eq:1}
\begin{cases}
\displaystyle -\Delta \psi = \left|\phi + PV_\mx\right|^{p-1} \(\phi + PV_\mx\) + Q_{11} &\textup{in } \Omega_\ep,\\
\displaystyle -\Delta \phi = \left|\psi + \mcp U_\mx\right|^{q-1} \(\psi + \mcp U_\mx\) + Q_{21} &\textup{in } \Omega_\ep,\\
\psi = \phi = 0 &\textup{on } \pa \Omega_\ep
\end{cases}
\end{equation}
where $Q_{11}, Q_{21} \in C^{\infty}(\overline{\Omega_\ep})$.

From the third inequality in \eqref{ele ineq}, we find
\[\left|\phi + PV_\mx\right|^{p-1} \(\phi + PV_\mx\) = |\phi|^{p-1} \phi + p\, |\phi|^{p-1} PV_\mx + O\(PV_\mx^p\) = |\phi|^{p-1} \(\phi +P_1\) + Q_{12}\]
where $P_1 := p\, PV_\mx \in C^{\infty}(\overline{\Omega_\ep})$ and $Q_{12} \in L^{\infty}(\Omega_\ep)$. On the other hand, $q$ may be greater than 2, so we have
\begin{align*}
\left|\psi + \mcp U_\mx\right|^{q-1} \(\psi + \mcp U_\mx\)
&= |\psi|^{q-1} \psi + q |\psi|^{q-1} \mcp U_\mx + O\(|\psi|^{q-2}(\mcp U_\mx)^2 \mone_{q>2} + (\mcp U_\mx)^q\) \\
&= |\psi|^{q-1} (\psi + P_2) + O\(|\psi|^{q-2} \mone_{q>2}\) + Q_{22}
\end{align*}
where $P_2 := q\, \mcp U_\mx \in C^{\infty}(\overline{\Omega_\ep})$ and $Q_{22} \in L^{\infty}(\Omega_\ep)$.

Consequently, \eqref{eq:1} is reduced to
\begin{equation}\label{eq:2}
\begin{cases}
\displaystyle -\Delta \psi = |\phi|^{p-1} (\phi + P_1) + Q_{13} &\textup{in } \Omega_\ep,\\
\displaystyle -\Delta \phi = |\psi|^{q-1} (\psi + P_2) + O\(|\psi|^{q-2} \mone_{q>2}\) + Q_{23} &\textup{in } \Omega_\ep,\\
\psi = \phi = 0 &\textup{on } \pa \Omega_\ep
\end{cases}
\end{equation}
where $(P_1, P_2),\, (Q_{13},Q_{23}) \in (L^{\infty}(\Omega_\ep))^2$.

At this moment, we need a regularity result which holds for any linear Hamiltonian-type elliptic system \eqref{eq:reg1}.
It is a modified version of \cite[Lemma B.1]{KP} and its proof is inspired by that of \cite[Theorem 1.3]{CL}.
\begin{lemma}\label{lemma:reg}
Suppose that $N \ge 4$, $p \in (1,\frac{N-1}{N-2})$, and $\oms$ is a smooth bounded domain in $\R^N$.
Given sufficiently small numbers $\zeta_1,\, \zeta_2 > 0$ satisfying $\zeta_1(q+1) = \zeta_2(p+1)$, we set
\[\sigma_1 = \frac{p+1}{p-\zeta_1} \quad \textup{and} \quad \sigma_2 = \frac{q+1}{q-\zeta_2}.\]
Suppose that $(\psi,\phi) \in X_{p,q}(\oms)$ and $(Q_1,Q_2) \in L^{\sigma_1}(\oms) \times L^{\sigma_2}(\oms)$.
There is a small constant $\delta > 0$ depending only on $N,\, p,\, \oms,\, \zeta_1,\, \zeta_2$ such that if
\begin{equation}\label{eq:reg0}
\|F_1\|_{L^{\frac{p+1}{p-1}}(\oms)} + \|F_2\|_{L^{\frac{q+1}{q-1}}(\oms)} < \delta
\end{equation}
and
\begin{equation}\label{eq:reg1}
\begin{cases}
-\Delta \psi = F_1\phi + Q_1 &\textup{in } \oms,\\
-\Delta \phi = F_2\psi + Q_2 &\textup{in } \oms,\\
\psi = \phi = 0 &\textup{on } \pa \oms,
\end{cases}
\end{equation}
then $(\psi, \phi) \in L^{\frac{N\sigma_1}{N-2\sigma_1}}(\oms) \times L^{\frac{N\sigma_2}{N-2\sigma_2}}(\oms)$.
\end{lemma}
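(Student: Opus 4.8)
The plan is to run a bootstrap argument on the system \eqref{eq:reg1}, using the smallness condition \eqref{eq:reg0} to absorb the contributions of $F_1\phi$ and $F_2\psi$ and the Calderón--Zygmund estimates to upgrade integrability step by step. First I would record the elliptic estimate: if $-\Delta w = f$ in $\oms$ with $w=0$ on $\pa\oms$ and $f \in L^r(\oms)$ for some $r \in (1,\frac N2)$, then $w \in L^{\frac{Nr}{N-2r}}(\oms)$ with $\|w\|_{L^{\frac{Nr}{N-2r}}} \le C\|f\|_{L^r}$; this is just the Sobolev embedding $W^{2,r} \hookrightarrow L^{\frac{Nr}{N-2r}}$ applied to $\mci^*_{\oms}$, the analogue of the operator in Section \ref{sec LSred}. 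The role of the hypothesis $p < \frac{N-1}{N-2}$ (equivalently $q$ not too large, since $(p,q)$ lies on \eqref{critical}) is to guarantee that the relevant Sobolev exponents stay in the admissible range $(1,\frac N2)$ throughout the finitely many bootstrap steps; this is the place where the restriction on $p$ is genuinely used, exactly as in \cite[Lemma B.1]{KP}.

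Next, I would set up the iteration. By Hölder with the conjugate pair determined by $\|F_1\|_{L^{(p+1)/(p-1)}}$, if $\phi \in L^t(\oms)$ then $F_1\phi \in L^{r_1}$ with $\frac{1}{r_1} = \frac{p-1}{p+1} + \frac1t$, and adding $Q_1 \in L^{\sigma_1}$ we get $-\Delta\psi \in L^{\min\{r_1,\sigma_1\}}$; symmetrically for $\phi$ with $F_2$, $\psi \in L^s$ and $Q_2 \in L^{\sigma_2}$. Starting from the a priori membership $(\psi,\phi) \in L^{q+1}(\oms)\times L^{p+1}(\oms)$ coming from $X_{p,q}(\oms) \subset L^{q+1}\times L^{p+1}$, one iterates these two estimates alternately. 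The bookkeeping is that each application of the elliptic estimate raises the exponent by the fixed Sobolev gain, while the Hölder step with $F_i$ costs only the fixed loss $\frac{p-1}{p+1}$ (resp. $\frac{q-1}{q+1}$); since the Sobolev gain outpaces this loss precisely because $p>1$, after finitely many rounds the integrability of $(\psi,\phi)$ overtakes that dictated by $(Q_1,Q_2)$, and the iteration stabilizes at $(\psi,\phi) \in L^{\frac{N\sigma_1}{N-2\sigma_1}}(\oms)\times L^{\frac{N\sigma_2}{N-2\sigma_2}}(\oms)$, which is the claimed conclusion. The smallness \eqref{eq:reg0} is what lets us keep the constants under control and, more importantly, ensures the linear map $(\psi,\phi)\mapsto (F_1\phi,F_2\psi)$ does not create a fixed-point obstruction, so that the improved integrability genuinely propagates back to $(\psi,\phi)$ rather than merely to some auxiliary solution.

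The main obstacle I anticipate is verifying that the exponents encountered in the bootstrap never leave the range where the Calderón--Zygmund/Sobolev estimate applies, i.e.\ they stay strictly above $1$ and strictly below $\frac N2$ at each of the finitely many stages — and, at the final stage, that $\sigma_1,\sigma_2 < \frac N2$, which is exactly why $\zeta_1,\zeta_2$ must be taken small and why the hypothesis $p<\frac{N-1}{N-2}$ is imposed (it forces $\sigma_1 = \frac{p+1}{p-\zeta_1}$ to be comfortably below $\frac N2$). One must also track when $q>2$, where the extra term $O(|\psi|^{q-2})$ in \eqref{eq:2} appears; but since $|\psi|^{q-2}$ has strictly higher integrability than $|\psi|^{q-1}$ whenever $\psi$ is already in a decent $L^t$, that term is harmless and can be folded into the inhomogeneous part. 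Apart from these exponent computations the argument is routine; I would carry it out by defining the two sequences $t_k$ (for $\phi$) and $s_k$ (for $\psi$) explicitly, checking they are increasing and cross the target level in a uniformly bounded number of steps, and invoking the linear estimate at each step.
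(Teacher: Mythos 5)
Your bootstrap scheme does not actually improve integrability here, because the system is \emph{critical}: the Sobolev gain and the H\"older loss cancel exactly, and the iteration is stuck at its starting point. Let me make this quantitative. Starting from $\phi\in L^{t}$, H\"older with $F_1\in L^{\frac{p+1}{p-1}}$ gives $F_1\phi\in L^{r_1}$ with $\frac{1}{r_1}=\frac{p-1}{p+1}+\frac1t$, and then $W^{2,r_1}\hookrightarrow L^{s'}$ gives $\psi\in L^{s'}$ with $\frac{1}{s'}=\frac{p-1}{p+1}+\frac1t-\frac2N$; the analogous step sends $\psi\in L^{s}$ to $\phi\in L^{t'}$ with $\frac{1}{t'}=\frac{q-1}{q+1}+\frac1s-\frac2N$. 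Over one full round (two half-steps) the net change in, say, $\frac1s$ is
\[
\Delta\!\left(\tfrac1s\right)=\frac{p-1}{p+1}+\frac{q-1}{q+1}-\frac4N
=2-2\left(\tfrac{1}{p+1}+\tfrac{1}{q+1}\right)-\frac4N
=2-2\cdot\tfrac{N-2}{N}-\tfrac4N=0,
\]
using precisely the Sobolev hyperbola \eqref{critical}. So starting from the a priori regularity $s_0=q+1$, $t_0=p+1$ you return to $s_1=q+1$, $t_1=p+1$: no progress whatsoever, regardless of $p>1$. The claim that ``the Sobolev gain outpaces this loss precisely because $p>1$'' is therefore incorrect; that would be true in the subcritical regime \eqref{subcritical}, but not on the hyperbola. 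The smallness of $F_1, F_2$ does not help a bootstrap either, since smallness only controls constants, not exponents.

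The paper's actual argument is a contraction-mapping argument, which is structurally different and is exactly how one breaks the critical deadlock. One fixes the \emph{target} pair $(r,s)=\bigl(\frac{N\sigma_1}{N-2\sigma_1},\frac{N\sigma_2}{N-2\sigma_2}\bigr)$ in advance, checks via the Hardy--Littlewood--Sobolev and H\"older inequalities (using \eqref{critical} and the small perturbations $\zeta_1,\zeta_2$) that the linear map $T(f,g)=(T_1g,T_2f)$, $T_1g=G*(F_1 g)$, $T_2f=G*(F_2 f)$, maps $L^r\times L^s$ boundedly into itself with operator norm $\lesssim \|F_1\|_{L^{(p+1)/(p-1)}}+\|F_2\|_{L^{(q+1)/(q-1)}}<\delta$, and then uses the smallness \eqref{eq:reg0} to make $T$ a contraction on $L^r\times L^s$. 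The Banach fixed-point theorem produces a solution of $(\psi,\phi)=T(\psi,\phi)+(\mathcal{Q}_1,\mathcal{Q}_2)$ in $L^r\times L^s$, and uniqueness (in the larger space where the original $(\psi,\phi)$ lives, where $T$ is also a contraction) identifies it with the given $(\psi,\phi)$. So the role of \eqref{eq:reg0} is not to ``keep constants under control'' in an iteration, but to make the contraction argument available in the first place — which is the whole point at criticality. You should replace your bootstrap by this fixed-point argument, or, if you wish to keep an iterative flavor, use the smallness to absorb the $F_i$-terms as in a Moser-type or freezing-the-coefficient argument rather than a naive exponent count.
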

\begin{proof}
Let $(r,s) = (\frac{N\sigma_1}{N-2\sigma_1}, \frac{N\sigma_2}{N-2\sigma_2})$. Then
\begin{equation}\label{eq:reg11}
r > \frac{N(p+1)/p}{N-2(p+1)/p} = q+1,\quad s > \frac{N(q+1)/q}{N-2(q+1)/q} = p+1 > \frac{N}{N-2},
\end{equation}
and
\begin{equation}\label{eq:reg12}
\begin{aligned}
\frac{1}{r} + \frac{1}{p+1} - \frac{1}{q+1} &= \frac{1}{\sigma_1} - \frac{2}{N} + \frac{1}{p+1} - \frac{1}{q+1} \\
&= \frac{p}{p+1} - \frac{2}{N} + \frac{1}{p+1} - \frac{1}{q+1} - \frac{\zeta_1}{p+1} = \frac{1-\zeta_1}{p+1} \\
&= \frac{1}{p+1} - \frac{\zeta_2}{q+1} = \frac{q-\zeta_2}{q+1} - \frac{2}{N} = \frac{1}{\sigma_2} - \frac{2}{N} = \frac{1}{s}.
\end{aligned}
\end{equation}
Let $T_1$ and $T_2$ be the operators given as
\[(T_1g)(x) = \int_{\oms} G(x,y) (F_1g)(y)\dy \quad \textup{and} \quad (T_2f)(x) = \int_{\oms} G(x,y) (F_2f)(y)\dy\]
for $x \in \oms$, where $G$ is the Green's function of the Dirichlet Laplacian $-\Delta$ in $\oms$.
Applying the Hardy-Littlewood-Sobolev inequality, H\"older's inequality, \eqref{eq:reg11} and \eqref{eq:reg12}, we obtain
\[\|T_1g\|_{L^r(\oms)} \le C\|F_1g\|_{L^{\frac{Nr}{N+2r}}(\oms)} \le C\|F_1\|_{L^{\frac{p+1}{p-1}}(\oms)}\|g\|_{L^s(\oms)},\]
and similarly,
\[\|T_2f\|_{L^s(\oms)} \le C\|F_2f\|_{L^{\frac{Ns}{N+2s}}(\oms)} \le C\|F_2\|_{L^{\frac{q+1}{q-1}}(\oms)}\|f\|_{L^r(\oms)}.\]
Therefore, if we define the operator $T$ by $T(f,g) = (T_1g, T_2f)$, then it maps $L^r(\oms) \times L^s(\oms)$ into itself.
In fact, \eqref{eq:reg0} indicates that $T$ is a contraction mapping on $L^r(\oms) \times L^s(\oms)$ provided $\delta > 0$ small enough.

Set
\[\mcq_1(x) = \int_{\oms} G(x,y) Q_1(y)\dy \quad \textup{and} \quad \mcq_2(x) = \int_{\oms} G(x,y) Q_2(y)\dy\]
for $x \in \oms$, which belong to $L^r(\oms) \times L^s(\oms)$ thanks to the condition $(Q_1, Q_2) \in L^{\sigma_1}(\oms) \times L^{\sigma_2}(\oms)$.
We also write \eqref{eq:reg1} in the operator form
\begin{equation}\label{eq:reg3}
(\psi,\phi) = T(\psi,\phi) + (\mcq_1,\mcq_2).
\end{equation}
Then, by invoking the Banach fixed-point theorem and the uniqueness of solutions to \eqref{eq:reg3}, we deduce that $(\psi, \phi) \in L^r(\oms) \times L^s(\oms)$. The proof is concluded.
\end{proof}

By \eqref{error est}, we have that $\|\mce_{\ep,(d,\tau)}\|_{X_{\ep}} \to 0$ as $\ep \to 0$. Thus, in view of \eqref{nonlin est}, all the hypotheses in Lemma \ref{lemma:reg} are fulfilled for system \eqref{eq:2} with the choice
\[\begin{cases}
\Omega_* = \Omega_\ep,\, (F_1,F_2) = (|\phi|^{p-1},|\psi|^{q-1}),\\
Q_1 = |\phi|^{p-1} P_1 + Q_{13} \in L^{\frac{p+1}{p-1}}(\Omega_\ep),\, Q_2 = |\psi|^{q-1} P_2 + O\(|\psi|^{q-2} \mone_{q>2}\) + Q_{23} \in
L^{\frac{q+1}{q-1}}(\Omega_\ep).
\end{cases}\]%
Consequently, there exists a small number $\zeta > 0$ such that
\[(\psi,\phi) \in L^{q+1+\zeta}(\Omega_\ep) \times L^{p+1+\zeta}(\Omega_\ep).\]
In particular, we see from \eqref{eq:2} that
\[-\Delta \psi \in L^{\frac{p+1+\zeta}{p}}(\Omega_\ep) \quad \textup{and} \quad -\Delta \phi \in L^{\frac{q+1+\zeta}{q}}(\Omega_\ep).\]
Combined with the Calder\'on-Zygmund estimate and the Sobolev embedding theorem, this yields 
\[\psi \in W^{2,\frac{p+1+\zeta}{p}}(\Omega_\ep) \subset L^{\frac{n(p+1+\zeta)}{np-2(p+1+\zeta)}}(\Omega_\ep) \quad \textup{and} \quad \phi \in W^{2,\frac{q+1+\zeta}{q}}(\Omega_\ep) \subset L^{\frac{n(q+1+\zeta)}{nq-2(q+1+\zeta)}}(\Omega_\ep)\]
provided $np > 2(p+1+\zeta)$ and $nq > 2(q+1+\zeta)$.\footnote{If $np \le 2(p+1+\zeta)$, then $\psi \in L^a(\Omega_\ep)$ for all $a \ge 1$. Similarly, if $nq \le 2(q+1+\zeta)$, then $\phi \in L^b(\Omega_\ep)$ for all $b \ge 1$.}
Besides, an elementary computation shows
\[\frac{n(p+1+\zeta)}{np-2(p+1+\zeta)} \ge q+1+(1+\eta)\zeta \quad \textup{and} \quad \frac{n(q+1+\zeta)}{nq-2(q+1+\zeta)} \ge p+1+(1+\eta)\zeta\]
for some small number $\eta > 0$ independent of $\zeta$. Hence
\[\psi \in L^{q+1+(1+\eta)\zeta}(\Omega_\ep) \quad \textup{and} \quad \psi \in L^{p+1+(1+\eta)\zeta}(\Omega_\ep).\]
Iterating the above process finitely many times, we deduce
\[(\psi,\phi) \in L^a(\Omega_\ep) \times L^b(\Omega_\ep) \quad \textup{for all } a, b \ge 1.\]
Finally, by feeding this information back to \eqref{eq:2}, we arrive at $(\psi,\phi) \in (L^{\infty}(\Omega_\ep))^2$ as desired.

\bigskip \noindent \textbf{Acknowledgement.}
The authors thank to Prof. Pistoia for suggesting and discussing the problem studied in this paper.
S. Jin was supported by the Basic Science Research Program through the National Research Foundation of Korea (NRF) funded by the Ministry of Science and ICT (NRF-2020R1A2C4002615).
S. Kim was supported by the Basic Science Research Program through the National Research Foundation of Korea (NRF) funded by the Ministry of Science and ICT (NRF2020R1C1C1A01010133, NRF2020R1A4A3079066).

\end{document}